\setlist[description]{font=\normalfont}
\declaretheorem[numberwithin=section]{theorem}
\declaretheorem[sibling=theorem]{proposition}
\declaretheorem[sibling=theorem]{lemma}
\declaretheorem[sibling=theorem]{claim}
\declaretheorem[style=definition,sibling=theorem]{definition}
\declaretheorem[sibling=theorem]{fact}
\DeclareMathOperator{\cf}{cf}
\DeclareMathOperator{\cof}{cof} 
\DeclareMathOperator{\ot}{ot}
\DeclareMathOperator{\dom}{dom}
\DeclareMathOperator{\tc}{tc}
\DeclareMathOperator{\stem}{stem}
\DeclareMathOperator{\ssucc}{succ}
\DeclareMathOperator{\osucc}{osucc}
\renewcommand{\th}{^\text{th}}
\newcommand{\su}{\hspace{1mm}\text{s.t.}\hspace{1mm}}
\newcommand{\seq}[2]{\langle #1 : #2 \rangle}
\newcommand{\pair}[2]{\langle #1 , #2 \rangle}
\newcommand{\Col}{\textup{Col}}
\newcommand{\Add}{\text{Add}}
\newcommand{\ON}{\textup{ON}}
\newcommand{\GCH}{\textup{\textsf{GCH}}}
\newcommand{\CH}{\textup{\textsf{CH}}}
\newcommand{\ZFC}{\textup{\textsf{ZFC}}}
\newcommand{\PCF}{\textup{\textsf{PCF}}}
\newcommand{\WRP}{\textup{\textsf{WRP}}}
\newcommand{\then}{\implies}
\newcommand{\rest}{\upharpoonright}
\newcommand{\nrest}{\!\rest\!}
\newcommand{\la}{\langle}
\newcommand{\ra}{\rangle}
\newcommand{\such}{\; : \;}  
\newcommand{\concat}{\,{}^\frown{} \,}
\newcommand{\nothing}[1]{}
\newcommand{\M}{\mathbb M}
\renewcommand{\P}{\mathbb P}
\newcommand{\F}{\mathbb F}
\newcommand{\B}{\mathbb B}
\renewcommand{\S}{\mathbb S}
\newcommand{\G}{\mathbb G}
\newcommand{\U}{\mathbb U}
\newcommand{\bL}{\mathbb L}
\newcommand{\E}{\mathbb E}
\newcommand{\pI}{\textup{\textsf{I}}}
\newcommand{\pII}{\textup{\textsf{II}}}
\author{Maxwell Levine}
\author{Heike Mildenberger}
\date{\today}
\title{Good Scales and Non-Compactness of Squares}
\begin{document}

\begin{abstract} Cummings, Foreman, and Magidor investigated the extent to which square principles are compact at singular cardinals.  The first author proved that if $\kappa$ is a singular strong limit of uncountable cofinality, all scales on $\kappa$ are good, and $\square^*_\delta$ holds for all $\delta<\kappa$, then $\square_\kappa^*$ holds. In this paper we will present a strongly contrasting result for $\aleph_\omega$. We construct a model in which $\square_{\aleph_n}$ holds for all $n<\omega$, all scales on $\aleph_\omega$ are good, but in which $\square_{\aleph_\omega}^*$ fails and some weak forms of internal approachability for $[H(\aleph_{\omega+1})]^{\aleph_1}$ fail. This requires an extensive analysis of the dominating and approximation properties of a version of Namba forcing. We also obtain a variation of the main result.\end{abstract}

\maketitle

\section{Introduction}

There is less independence exhibited in the behavior of singular cardinals than there is with regular cardinals. Moreover, the circumstances depend on the cofinality of the singular cardinal. One early example of this phenomenon had to do with the behavior of the continuum function. Magidor proved that the general continuum hypothesis ($\GCH$) can fail for the first time at $\aleph_\omega$ \cite{Magidor1977}, while Silver proved that $\GCH$ cannot fail for the first time at a singular of uncountable cofinality \cite{Silver1975}. Questions in this area often take a form pertaining to compactness: How much do the configurations below a singular cardinal affect the configuration \emph{at} the singular cardinal?

The theory behind these phenomena has developed considerably. Shelah introduced $\PCF$ theory in the late 1980's to study the behavior of singular cardinals like $\aleph_\omega$. Using these revolutionary methods, Shelah was able to obtain surprising $\ZFC$ theorems for the cardinal arithmetic of singular cardinals that do not have analogs for regular cardinals. In the early 2000's, Cummings, Foreman, and Magidor wrote a series of papers connecting $\PCF$ theory to the combinatorial properties of canonical inner models. They focused notably on varieties of good scales, which are the most typical tame objects in $\PCF$ theory, and variants of Jensen's square principle, which embody the combinatorial properties of G{\"o}del's model $L$. In this manner, they established much of the basic language and theoretical tools that continue to be used in the investigations of these objects.

This paper will consider the question of the extent to which good scales can be used to construct variants of the square principle for successors of singular cardinals. There is some evidence in the positive direction: that these principles are to some extent compact. On one hand, Cummings, Foreman, and Magidor proved that if $\square_{\aleph_n}$ holds for all $n<\omega$, then the good points of a scale on $\aleph_\omega$ can be used to construct a square-like sequence of length $\aleph_{\omega+1}$ \cite[Theorem 3.5]{Cummings-Foreman-Magidor2004}. On the other hand, Cummings et al$.$ proved that it is consistent that $\square_{\aleph_n}$ holds for all $n<\omega$ while the canonical principle $\square_{\aleph_\omega}$ fails \cite{Cummings-Foreman-Magidor2003}.\footnote{See from the definition of $\square_\kappa$ below that it is really an assertion about $\kappa^+$.} This result was strengthened by Krueger, who showed that a similar model can be obtained with no good scales on $\aleph_\omega$ \cite{Krueger2013}. Nonetheless, the first author proved that if $\kappa$ is a singular strong limit cardinal of uncountable cofinality such that $\square_\delta^*$ (a weak version of $\square_\delta$) holds for all $\delta<\kappa$, and all scales on $\kappa$ are good, then $\square_\kappa^*$ holds \cite{Levine2022compactness}. Hence it is natural to further investigate the interplay between squares below a singular cardinal and what they imply for the successor of that singular cardinal in the presence of good scales.

We will prove a consistency result as our main theorem, and this will lead us to a careful analysis of a version of Namba forcing. This method was originally used to demonstrate that $\aleph_2$ can be singularized without collapsing $\aleph_1$, and to address questions about Boolean algebras, but later it became apparent that Namba forcing is bound up with the study of singular cardinals as such (see e.g$.$ \cite{Bukovsky-Coplakova1990, Foreman-Magidor1995, Foreman-Todorcevic2005, Cox-Krueger2018}).

For technical reasons that will become clear in the course of the paper, we found it useful to define a variation of the notion of internal approachability:

\begin{definition} Let $\theta$ and $\lambda$ be regular cardinals and let $M \prec H(\theta)$. We say that $M$ is \emph{sup-internally approachable at $\lambda$} if there is a sequence $\seq{M_i}{i<\omega_1}$ of countable sets such that
\begin{enumerate}
\item for all $j<\omega_1$, $\seq{M_i}{i<j} \in M_{j+1} \cap M$,
\item $\sup_{i<\omega_1}\sup(M_i \cap \lambda) = \sup(M \cap \lambda)$.
\end{enumerate}
\end{definition}

Our theorem presents a contrast to the situation with compactness of weak squares for singulars of uncountable cofinality.

\begin{theorem}\label{maintheorem} Assuming the consistency of a cardinal $\kappa$ that is $\kappa^{\omega+1}$-supercompact, it is consistent that there is a model of set theory in which the following are true:

\begin{enumerate}
\item $\aleph_\omega$ is a strong limit,
\item all scales on $\aleph_\omega$ are good,

\item $\square_{\aleph_n}$ holds for all $n<\omega$,

\item $\square_{\aleph_{\omega}}^*$ fails,\footnote{The consistency of the conjunction of the first four points in \autoref{maintheorem} was claimed the first arXiv version of the compactness of weak square paper \cite{Levine2022compactness}, but the proof in the original source was flawed.}
\item there are stationarily-many $N \prec H(\aleph_{\omega+1})$ of cardinality $\aleph_1$ that are not sup-internally approachable at $\aleph_{\omega+1}$.
\end{enumerate}\end{theorem}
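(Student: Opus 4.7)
The plan is to start with a ground model $V$ in which $\kappa$ is $\kappa^{+\omega+1}$-supercompact, apply a Laver-style preparation to render this supercompactness indestructible under the kinds of forcings we will use, and then execute a Prikry/Magidor-type iteration that turns $\kappa$ into $\aleph_\omega$ while collapsing cardinals and simultaneously arranging square sequences below. In the resulting intermediate model $V_1$, one should have: $\aleph_\omega$ a strong limit, $\square_{\aleph_n}$ for every $n<\omega$, and all scales on $\aleph_\omega$ good. This is obtained from the Cummings--Foreman--Magidor framework, using the supercompactness of $\kappa$ to guarantee goodness of scales via covering/approximation arguments; it secures clauses (1)--(3) of the theorem.

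The main step is then to force over $V_1$ with a tailored variant $\P$ of Namba forcing targeted at $\aleph_{\omega+1}$, with the aim of producing clauses (4) and (5) while preserving (1)--(3). The generic object should add a new cofinal $\omega$-sequence in $\aleph_{\omega+1}$ whose behavior obstructs all threads through putative $\square^*_{\aleph_\omega}$-sequences in $V_1$, yielding the failure of $\square^*_{\aleph_\omega}$ in the extension. Moreover, using an appropriate closure/approximation property of $\P$, any candidate filtration $\seq{M_i}{i<\omega_1}$ of a would-be sup-internally approachable $N \prec H(\aleph_{\omega+1})$ lives in a sufficiently nice subordering, so the generic sup at $\aleph_{\omega+1}$ can be arranged to escape $\sup_{i<\omega_1}\sup(M_i \cap \aleph_{\omega+1})$. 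This produces stationarily many $N$ of size $\aleph_1$ failing sup-internal approachability at $\aleph_{\omega+1}$, giving clause (5).

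The main obstacle is the preservation analysis, which is exactly what the abstract advertises as the technical heart. We must show that $\P$ does not collapse $\aleph_1$, any $\aleph_n$, or $\aleph_{\omega+1}$; does not add bounded subsets of $\aleph_\omega$ (so that $\aleph_\omega$ remains a strong limit and the square sequences at $\aleph_n$ are preserved); and does not destroy goodness of scales on $\aleph_\omega$. Namba-type forcings are notoriously delicate on these points: some preservation facts require a dominating property (to keep cofinalities and pcf structure intact, hence to keep scales good), while others require an approximation-type property (to ensure that no new thread is added to a $\square_{\aleph_n}$-sequence). I expect the bulk of the work to go into isolating the right version of Namba forcing and verifying both properties, and then assembling them to conclude simultaneous preservation of (1)--(3) together with the positive production of (4) and (5).
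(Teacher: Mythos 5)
There is a genuine gap here, on two levels. First, the claim that clauses (1)--(3) --- in particular ``all scales on $\aleph_\omega$ are good'' --- fall out of a Prikry/Magidor-style iteration that turns $\kappa$ into $\aleph_\omega$, ``using the supercompactness of $\kappa$ to guarantee goodness of scales,'' is not merely unproved but points in the wrong direction: supercompact-style reflection is what makes scales \emph{bad} (compare Cummings--Magidor, where $\MM$ implies all scales at countable-cofinality singulars are bad, and Krueger's CFM-style model, which has no good scales at all). In the actual construction $\kappa$ does not become $\aleph_\omega$: a revised countable support, Laver-guided semiproper iteration $\mathbb{I}$ makes $\kappa=\aleph_2$, the $\square_{\aleph_n}$'s are forced explicitly by Jensen's posets $\S_{\aleph_n}$, and goodness of all scales is forced by a dedicated strategically closed poset $\G$ adding a good scale on $\prod_n\aleph_n$; the whole tension of the theorem is that this deliberately-added goodness must survive the reflection arguments used for (4) and (5).

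Second, your final step --- forcing over $V_1$ with a Namba variant at $\aleph_{\omega+1}$ whose generic ``obstructs all threads through putative $\square^*_{\aleph_\omega}$-sequences'' --- cannot work. A $\square^*_{\aleph_\omega}$-sequence has no threading clause to obstruct, and the principle is upward absolute to any outer model in which $\aleph_\omega$ remains a cardinal and $\aleph_{\omega+1}$ its successor (the same sequence still witnesses it), so no forcing preserving (1)--(3) can destroy it; the failure of $\square^*_{\aleph_\omega}$ must be arranged in the final model itself. The paper does this via the Fuchs--Rinot theorem: $\square^*_{\aleph_\omega}$ plus $\Add(\aleph_{\omega+1})$ would yield a non-reflecting stationary subset of $\aleph_{\omega+1}\cap\cof(\omega)$, whereas a lifting of the supercompact embedding shows $\Refl(\aleph_{\omega+1}\cap\cof(\omega))$ holds after $\Add(\aleph_{\omega+1})$. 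Relatedly, the Namba forcing $\bL$ is never forced over the final model and does not act at $\aleph_{\omega+1}$ at all: it singularizes the $\aleph_n$'s ($2\le n<\omega$) and appears only inside $j(\mathbb{I})$, i.e.\ in the auxiliary outer extension through which $j$ is lifted. Clause (5) is then obtained by showing $j[H(\aleph_{\omega+1})^{V[G]}]$ is weakly $(\omega_1,\aleph_{\omega+1})$-guessing, which needs the approximation theorem for $\bL$ followed by countably closed forcing together with Baumgartner specialization of the tree of approach-chains $T_\textup{IA}$. The key lemma your outline has no analogue of is the one that makes the lift possible in the presence of $\G$: the Namba generic's product function is an exact upper bound of the ground-model scale with values of cofinality $\omega_1$, so $\sup j[\aleph_{\omega+1}]$ is a good point of $j(\vec f)$ and a master condition for $j(\G)$ exists. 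Without that interaction, neither (4) nor (5) can be carried through alongside (2).
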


The use of a large cardinal assumption is necessary for our result. The failure of $\square_\kappa$ for singular $\kappa$ implies the consistency of substantial large cardinals: Sargsyan showed that it implies the existence of a non-tame mouse \cite{Sargsyan2014}, which in particular, by work of Woodin, implies the consistency of a Woodin cardinal above a strong cardinal \cite[Theorem 15.1]{Steel2008}.

\autoref{maintheorem} depends on the approximation and dominating properties (roughly-stated) of a version of Namba forcing, which we analyze in Section 2. We will also use a simple poset for forcing the existence of good scales in the construction. The interaction of the Namba forcing and the good scale forcing will be the crux of the proof, which we provide in Section 3. Our construction will be shaped in a way that gives an analogy with guessing models (see \cite{Viale-Weiss2010}, so that our work can be connected to more recent research.

In Section 4, we will prove some elaborating results that more or less consider possible variations on \autoref{maintheorem}. First, we show that it as possible to obtain a model in which $\square_{\aleph_n}$ holds for all $n<\omega$ while $\square(\aleph_{\omega+1},\aleph_1)$ fails. This square principle would fail under $\mathsf{PFA}$, so this result indicates possible tension between good scales and square principles.

We are assuming that the reader is familiar with the basics of cardinal arithmetic, forcing, and large cardinals (see \cite{Jech2003}).

\subsection{Basic Combinatorial Notions} 

Here we will define some $\PCF$-theoretic notions and recall some fundamental facts. All definitions and facts due to Shelah \cite{Shelah1994}. For the sake of readability, we will give more recent citations and short proofs where possible.

\begin{definition}\

\begin{enumerate}

\item If $\tau$ is a cardinal and $f,g : \tau \to \ON$, then we write $f <^\ast g$ if there is some $j<\tau$ such that $f(i)<g(i)$ for all $i \ge j$.

\item Given a singular cardinal $\kappa$ and a strictly increasing sequence $\seq{\kappa_i}{i<\cf \kappa}$ of regular cardinals converging to $\kappa$, the \emph{product} $\prod_{i<\cf \kappa}\kappa_i$ is a space such that $f \in \prod_{i<\cf \kappa}\kappa_i$ means $\dom f = \cf \kappa$ and $f(i)<\kappa_i$ for all $i<\cf \kappa$.

\item Given a product $\vec \kappa = \prod_{i<\cf \kappa}\kappa_i$, a sequence $\seq{f_\alpha}{\alpha<\nu}$ is a \emph{scale of length $\kappa^+$} on $\vec \kappa$ if:

\begin{enumerate}

\item for all $\alpha<\kappa^+$, $f_\alpha \in \vec \kappa$;

\item for all $\alpha<\beta<\kappa^+$,  $f_\alpha <^\ast f_\beta$;

\item for all $g \in \vec \kappa$, there is some $\alpha<\kappa^+$ such that $g<^\ast f_\alpha$.

\end{enumerate}

\end{enumerate}
\end{definition}

\begin{fact}\label{always-a-scale} If $\kappa$ is singular of cofinality $\lambda$, then there is a product $\prod_{i<\lambda}\kappa_i$ on $\kappa$ that carries a scale of length $\kappa^+$ \cite[Section 2]{Handbook-Abraham-Magidor}.\end{fact}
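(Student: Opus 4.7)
The plan is to invoke Shelah's PCF theory. First I would fix any strictly increasing sequence $\langle \mu_i : i<\lambda\rangle$ of regular cardinals cofinal in $\kappa$ with $\mu_0>\lambda$, and set $A=\{\mu_i:i<\lambda\}$, a set of regular cardinals below $\kappa$ with $|A|^+<\min A$. In this regime Shelah's machinery applies: $\mathrm{pcf}(A)$ is a set of regular cardinals with a maximum, each $\theta\in\mathrm{pcf}(A)$ is realized as $\mathrm{tcf}(\prod A/U)$ for some ultrafilter $U$ on $A$ extending the Fr\'echet filter, and PCF generators $B_\theta\subseteq A$ exist for each such $\theta$.

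The heart of the argument will be to show $\kappa^+\in\mathrm{pcf}(A)$. By K\"onig's theorem $\kappa^{\cf\kappa}>\kappa$, which via a standard diagonalization implies that for some Fr\'echet ultrafilter $U$ on $A$ the ultraproduct $\prod A/U$ has true cofinality strictly greater than $\kappa$; hence $\max\mathrm{pcf}(A)>\kappa$. Then $\kappa^+\in\mathrm{pcf}(A)$ follows from Shelah's no-hole theorem, which fills in the regular cardinals in an appropriate interval below $\max\mathrm{pcf}(A)$.

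Given $\kappa^+\in\mathrm{pcf}(A)$, I would take $B=B_{\kappa^+}\subseteq A$ to be its PCF generator. Necessarily $B$ is unbounded in $\lambda$: otherwise $\sup B<\kappa$ and the true cofinality of $\prod B/J^{bd}$ would lie strictly below $\kappa$, contradicting $\mathrm{tcf}(\prod B/J^{bd})=\kappa^+$. Reindexing the elements of $B$ by order type $\lambda$ then produces a product $\langle\kappa_i:i<\lambda\rangle$ of regulars cofinal in $\kappa$ whose true cofinality modulo $J^{bd}_\lambda$ is exactly $\kappa^+$, and any $<^*$-increasing $<^*$-cofinal sequence of length $\kappa^+$ witnessing this true cofinality is the desired scale.

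The main obstacle I expect is pinpointing $\kappa^+\in\mathrm{pcf}(A)$ precisely, which rests on the non-elementary parts of PCF theory (existence of generators and the no-hole theorem); the easier bounds only give $\max\mathrm{pcf}(A)>\kappa$ rather than $\kappa^+$ itself. Since the fact is cited from Abraham--Magidor's handbook chapter, these results can be invoked as a black box rather than reproved.
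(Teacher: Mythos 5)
The paper does not prove this fact at all --- it is quoted directly from Abraham--Magidor's handbook chapter, where it is established in Section 2 by a direct construction: one builds a $<_{J^{\mathrm{bd}}}$-increasing sequence of length $\kappa^+$ (using that $\prod_i\mu_i/J^{\mathrm{bd}}$ is $\kappa^+$-directed, since $\kappa$ is singular), extracts an exact upper bound $h$ with $\cf(h(i))>\lambda$ via the trichotomy theorem, and takes $\kappa_i=\cf(h(i))$. Your route through $\mathrm{pcf}$ generators and the no-hole theorem is therefore quite different from the cited source, and it black-boxes machinery that in that source is developed only after (and with more effort than) the scale-existence theorem itself.

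More importantly, your argument has a genuine gap at the step ``necessarily $B$ is unbounded.'' The generator $B=B_{\kappa^+}$ satisfies $\mathrm{tcf}\bigl(\prod B / J_{<\kappa^+}[A]\restriction B\bigr)=\kappa^+$, \emph{not} (a priori) $\mathrm{tcf}(\prod B/J^{\mathrm{bd}})=\kappa^+$, so assuming $B$ bounded contradicts nothing you have established. What is true is the reverse inclusion $J_{<\kappa^+}[A]\subseteq J^{\mathrm{bd}}$ (every unbounded $X\subseteq A$ has $\max\mathrm{pcf}(X)\ge\kappa^+$ by the directedness argument), which lets you transfer the scale from $J_{<\kappa^+}[A]$ to $J^{\mathrm{bd}}$ \emph{once} you know $B$ is unbounded --- but the unboundedness itself is exactly the issue. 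All you know is $B\notin J_{<\kappa^+}[A]$, i.e.\ $\max\mathrm{pcf}(B)=\kappa^+$, and nothing elementary rules out a \emph{bounded} $B\subseteq A$ with $\max\mathrm{pcf}(B)=\kappa^+$ unless $\kappa$ is a strong limit (in which case $(\sup B)^{|B|}<\kappa$ forces bounded sets into $J_{<\kappa^+}[A]$, and your argument goes through). Since the fact is stated for arbitrary singular $\kappa$, this is precisely the point where the real work lies, and it is the point the exact-upper-bound proof in the cited chapter is designed to handle. Two smaller issues: you arrange only $\mu_0>\lambda$ but then use $|A|^+<\min A$, and the appeal to K\"onig plus ``a standard diagonalization'' is vaguer than needed --- the clean statement is that $\prod A/J^{\mathrm{bd}}$ is $\kappa^+$-directed, which already gives $\cf(\prod A/U)\ge\kappa^+$ for every ultrafilter $U$ extending the cobounded filter, hence $\max\mathrm{pcf}(A)\ge\kappa^+$ (and also makes the fact trivial when $2^{\kappa}=\kappa^+$, as the paper remarks).
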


\autoref{always-a-scale} is only nontrivial if $2^\kappa > \kappa^+$.

\begin{definition} Fix a product $\prod_{i<\cf \kappa}\kappa_i$ on a singular $\kappa$.

\begin{enumerate}

\item If $\vec{f}=\seq{f_\beta}{\beta<\gamma}$ is a $<^*$-increasing subsequence of $\vec \kappa$, then a function $h$ is an \emph{exact upper bound} (or \emph{eub} for short) of $\vec{f}$ if

\begin{enumerate}

\item for all $\beta<\gamma$, $f_\beta <^* h$,

\item for all $g <^* h$, there is some $\beta< \gamma$ such that $g <^* f_\beta$.

\end{enumerate}

\item Given a $<^*$-increasing sequence $\vec f = \seq{f_\beta}{\beta<\gamma}$ on a product $\prod_{i<\cf \kappa}\kappa_i$, we say that $\alpha \le \gamma$ is \emph{good} if there is some unbounded $A \subset \alpha$ with $\ot A = \cf \alpha$ and some $j<\cf \kappa$ such that for all $i \ge j$, $\seq{f_\beta(i)}{\beta \in A}$ is strictly increasing.

\item If there is a club $D \subset \kappa^+$ such that every $\alpha \in D$ with $\cf \alpha > \cf \kappa$ is a good point of $\vec f$, then $\vec f$ is a \emph{good scale}.

\end{enumerate}\end{definition}

It is an exercise to obtain:

\begin{fact} If $\vec{f}=\seq{f_\beta}{\beta<\gamma}$ is a $<^*$-increasing subsequence of $\vec \kappa$, then an exact upper bound of $\vec{f}$ is in particular a least upper bound.\end{fact}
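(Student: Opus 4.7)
The plan is to prove this by contraposition. Given an eub $h$ of $\vec f$ and any other upper bound $h'$, I would show that $h(i) \le h'(i)$ holds for all but boundedly many $i<\cf\kappa$, which is what ``least upper bound'' means in the $<^*$-sense. Suppose toward contradiction that $A = \{i<\cf\kappa : h(i) > h'(i)\}$ is unbounded in $\cf\kappa$.

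The main step is to construct a function $g$ with $g <^* h$ that cannot be eventually dominated by any $f_\beta$, contradicting clause (b) of the eub definition. My choice is to let $g(i) = h'(i)$ when $i \in A$ and $g(i) = 0$ when $i \notin A$. Since $f_0 <^* h$, the value $h(i)$ is positive on a final segment of $\cf\kappa$; on such a final segment we have $g(i) < h(i)$ pointwise (on $A$ because $h'(i) < h(i)$ by definition of $A$, and off $A$ because $0 < h(i)$), so $g <^* h$. Note also that $g \in \vec\kappa$ since $h'(i) < \kappa_i$ for each $i$.

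By the eub property applied to $g$, there is some $\beta<\gamma$ with $g <^* f_\beta$. Since $h'$ is assumed to be an upper bound we also have $f_\beta <^* h'$, and transitivity of $<^*$ yields $g(i) < h'(i)$ for all sufficiently large $i$. But by construction $g(i) = h'(i)$ for every $i \in A$, and $A$ is unbounded in $\cf\kappa$, which is the desired contradiction.

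I do not anticipate a serious obstacle here; the argument is essentially a one-line trick. The only subtlety is to define $g$ to coincide with $h'$ exactly on the ``bad'' set $A$, so that the chain $g <^* f_\beta <^* h'$ inevitably collides with the identity $g(i) = h'(i)$ on an unbounded subset of $\cf\kappa$.
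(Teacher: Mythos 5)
Your argument is correct, and it is exactly the standard trick: the paper leaves this fact as an exercise, but its proof of the uniqueness of exact upper bounds (the fact immediately following) uses the very same device of zeroing out a candidate function off the unbounded ``bad'' set and then colliding the chain $g <^* f_\beta <^* h'$ with the equality $g = h'$ on that set. No gaps; the only cosmetic point is that the argument is a proof by contradiction of the $\le^*$-minimality rather than a contraposition, and the edge case $\gamma=0$ is harmless.
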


\begin{fact}\label{goodness-characterized}  Let $\seq{f_\beta}{\beta<\alpha}$ be a sequence of functions in a product $\prod_{i<\cf \kappa}\kappa_i$ where $\cf \kappa < \kappa_0$. Then the following are equivalent if $\cf(\alpha)>\cf(\kappa)$:

\begin{enumerate}
\item $\alpha$ is a good point.
\item There is a $<$-increasing sequence $\seq{h_\gamma}{\gamma < \cf(\alpha)}$ such that:

\begin{enumerate}

\item for all $i<\cf \kappa$ and $\gamma<\gamma'$, $h_\gamma(i)<h_{\gamma'}(i)$, 

\item for all $\gamma < \cf(\alpha)$, there is $\beta<\alpha$ such that $h_\gamma <^* f_\beta$, and for all $\beta<\alpha$, there is $\gamma<\cf(\alpha)$ such that $f_\beta<^* h_\gamma$. 

\end{enumerate}

\item There is an exact upper bound $h$ of $\seq{f_\beta}{\beta<\alpha}$ such that for some $j<\cf \kappa$, $\cf(h(i))=\cf(\alpha)$ for $i \ge j$.
\end{enumerate}
\end{fact}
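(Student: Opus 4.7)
The plan is to prove the cycle $(1) \Rightarrow (2) \Rightarrow (3) \Rightarrow (1)$. The fulcrum throughout is the hypothesis $\cf \alpha > \cf \kappa$: any function from $\cf \kappa$ into $\cf \alpha$ is bounded, which lets us compress information spread across the $\cf \kappa$ many product coordinates into a single ordinal below $\cf \alpha$. This principle will be applied once inside $(2) \Rightarrow (3)$ to verify exactness, and twice inside $(3) \Rightarrow (1)$ to manufacture a uniform witness for goodness.

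For $(1) \Rightarrow (2)$, I enumerate the unbounded set $A \subseteq \alpha$ witnessing goodness as $\seq{\beta_\gamma}{\gamma < \cf \alpha}$ with threshold $j < \cf \kappa$ and set $h_\gamma(i) := f_{\beta_\gamma}(i)$ for $i \geq j$, modifying values at coordinates $i < j$ as needed to enforce pointwise strict increase (condition (b) is mod-finite, so any such modification is harmless). For $(2) \Rightarrow (3)$, I take $h(i) := \sup_{\gamma < \cf \alpha} h_\gamma(i)$; clause (a) then makes $\cf(h(i)) = \cf \alpha$ on the tail where $\cf \alpha < \kappa_i$. To verify that $h$ is exact, suppose $g <^\ast h$; the map $i \mapsto \min\{\delta < \cf \alpha : g(i) < h_\delta(i)\}$ is a function $\cf \kappa \to \cf \alpha$ and so is bounded by some $\delta^\ast < \cf \alpha$, whence $g <^\ast h_{\delta^\ast}$, and clause (b) yields $g <^\ast f_\beta$ for some $\beta < \alpha$.

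For $(3) \Rightarrow (1)$, I fix for each $i$ in the relevant tail a strictly increasing cofinal sequence $\seq{\eta^i_\xi}{\xi < \cf \alpha}$ in $h(i)$, and define $g_\xi(i) := \eta^i_\xi$. By the eub property, each $g_\xi$ satisfies $g_\xi <^\ast f_{\beta(\xi)}$ for some $\beta(\xi) < \alpha$, and I choose these with $\beta(\xi)$ strictly increasing and cofinal in $\alpha$. Since $f_{\beta(\xi)} <^\ast h$, the map $i \mapsto \min\{\zeta < \cf \alpha : f_{\beta(\xi)}(i) < \eta^i_\zeta\}$ is a function $\cf \kappa \to \cf \alpha$ and so is bounded by some $\zeta^\ast(\xi) < \cf \alpha$ (second use of $\cf \alpha > \cf \kappa$). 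Attached to $\xi$ are also two coordinate-thresholds in $\cf \kappa$: one past which $g_\xi(i) < f_{\beta(\xi)}(i)$, one past which $f_{\beta(\xi)}(i) < \eta^i_{\zeta^\ast(\xi)}$. Together they define a map $\cf \alpha \to \cf \kappa$, which by pigeonhole is constant on a stationary $S \subseteq \cf \alpha$ with value bounded by some $k^\ast$. I then thin $S$ further so that $\xi < \xi'$ in $S$ forces $\xi' > \zeta^\ast(\xi)$ and so that $\seq{\beta(\xi)}{\xi \in S}$ remains cofinal in $\alpha$; for such $\xi < \xi'$ and $i \geq k^\ast$ we then have $f_{\beta(\xi)}(i) < \eta^i_{\zeta^\ast(\xi)} < \eta^i_{\xi'} < f_{\beta(\xi')}(i)$, so $\{\beta(\xi) : \xi \in S\}$ witnesses goodness.

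The main obstacle is $(3) \Rightarrow (1)$: it requires synchronizing, across $\cf \alpha$ many choices of $\xi$, both a uniform coordinate threshold in $\cf \kappa$ and a comparison bound inside $\cf \alpha$. Each is separately delivered by $\cf \alpha > \cf \kappa$, and the pigeonhole thinning on a stationary subset of $\cf \alpha$ is what combines them into one witness.
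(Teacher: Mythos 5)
Your proposal is correct and follows essentially the same route as the paper: the $(1)\Rightarrow(2)$ and $(2)\Rightarrow(3)$ constructions (functions read off along the good set $A$, then coordinatewise suprema, with regularity of $\cf(\alpha)>\cf(\kappa)$ giving exactness) are the paper's, and your direct $(3)\Rightarrow(1)$ is just the paper's $(3)\Rightarrow(2)$ step (cofinal sequences $\eta^i_\xi$ in each $h(i)$) composed with its ``sandwich'' argument for $(2)\Rightarrow(1)$, including the same pigeonhole stabilization of the coordinate thresholds on an unbounded subset of $\cf(\alpha)$. The only differences are cosmetic (you merge two implications into one and spell out the interleaving and thinning in more detail than the paper's sketch).
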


\begin{proof} (See \cite[Lemma 2.1]{Cummings-Foreman-Magidor2004}, \cite[Section 13]{Cummings2005}.)

For \emph{(1) $\Rightarrow$ (2):}, let $A \subseteq \alpha$ and $j<\cf \kappa$ witness goodness. For each $\gamma<\cf(\alpha)$, let $h_\gamma(i):=\sup \{f_\beta(i):\beta \in A, \ot(A \cap \beta)<\gamma\}$.  For \emph{(2) $\Rightarrow$ (3):} Given $\seq{h_\gamma}{\gamma<\cf(\alpha)}$ as in \emph{(2)}, let $h(i):= \sup_{\gamma<\cf(\alpha)}h_\gamma(i)$. For \emph{(3) $\Rightarrow$ (2):} take such an exact upper bound and observe that by the assumption $\cf \kappa < \kappa_0$ we can assume that $\cf(h(i))=\cf(\alpha)$ for \emph{all} $i<\cf \kappa$. Let $\seq{\beta^i_\xi}{\xi<\cf(\alpha)}$ be cofinal in $h(i)$ for $i<\cf\kappa$. Then let $h_\xi:i \mapsto \beta^i_\xi$.

\emph{(2) $\Rightarrow$ (1):}\footnote{This part is often known as the ``sandwich argument.''} Fix such a $\seq{h_\gamma}{\gamma<\cf(\alpha)}$. Choose $\seq{\beta_\xi,\gamma_\xi}{\xi<\cf(\alpha)}$ cofinal such that for all $h$, $h_{\gamma_\xi} <^* f_{\beta_\xi} <^* h_{\gamma_{\xi+1}}$. For all $\xi$, let $j_\xi$ be such that for all $i \ge j_\xi$, $h_{\gamma_\xi}(i) < f_{\beta_\xi}(i) < h_{\gamma_{\xi+1}}(i)$. There is some $j$ and some unboudned $A' \subseteq \cf(\alpha)$ such that for all $\xi \in A'$, $j_\xi = j$. Then $A:=\{\beta_\xi:\xi \in A\}$ and $j$ witness goodness for $\alpha$.\end{proof}

\autoref{goodness-characterized} is particularly useful because of the uniqueness of exact upper bounds:

\begin{fact}\label{unique-eubs} If $g$ and $h$ are exact upper bounds of $\seq{f_\beta}{\beta<\alpha}$, then $g=^* h$.\end{fact}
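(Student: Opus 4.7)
The plan is to argue by contradiction via a straightforward ``truncation'' construction. Suppose $g \ne^\ast h$, so that the set $\{i < \tau : g(i) \ne h(i)\}$ is cofinal in $\tau := \cf \kappa$. By symmetry, I may assume $B := \{i < \tau : g(i) < h(i)\}$ is cofinal in $\tau$.

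Next, I would cook up a function lying strictly below $h$ that still agrees with $g$ on $B$: define $h' \in \prod_{i<\tau}\kappa_i$ by $h'(i) := g(i)$ for $i \in B$ and $h'(i) := 0$ otherwise. Then $h'(i) < h(i)$ for every $i$ (we may harmlessly discard the bounded set of $i$ where $h(i) = 0$), so in particular $h' <^\ast h$. Applying clause (b) of the exact upper bound property to $h$, there is some $\beta < \alpha$ with $h' <^\ast f_\beta$.

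The contradiction then drops out by pairing this with the fact that $f_\beta <^\ast g$ (clause (a) applied to $g$). Let $T \subseteq \tau$ be a tail on which both $h'(i) < f_\beta(i)$ and $f_\beta(i) < g(i)$ hold; since $B$ is cofinal in $\tau$, pick any $i \in B \cap T$, and read off $g(i) = h'(i) < f_\beta(i) < g(i)$, which is absurd.

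I do not anticipate any real obstacle here. The one point demanding a moment of care is the passage from the assumption $g \ne^\ast h$ to an actual cofinal set on which a strict inequality holds in a fixed direction, and then the distinction between being ``cofinal in $\tau$'' and being ``on a tail'' when the two clauses of the eub definition are combined --- this is exactly what the truncation $h'$ is designed to mediate.
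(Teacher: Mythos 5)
Your proof is correct and is essentially the paper's own argument: both proceed by truncating the pointwise-smaller function to $0$ off the cofinal disagreement set, applying exactness of the other bound to get some $f_\beta$ above the truncation, and then contradicting the upper-bound property at a point of that cofinal set (you merely swap which of $g,h$ plays which role, and your aside that $\{i : h(i)=0\}$ is bounded is justified since $h$ dominates some $f_\beta$ on a tail). No gap here.
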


\begin{proof} Suppose $g$ and $h$ are eub's of $\seq{f_\beta}{\beta<\alpha}$ in a product $\prod_{i<\cf \kappa}\kappa_i$ but that $g \not\le^* h$, so there is an unbounded $X \subseteq \cf \kappa$ such that for all $i \in X$, $h(i)<g(i)$. Let $h'(i)( = h(i)$ for $i \in X$ and $h'(i)=0$ for $i \notin X$. Then $h' <^* g$, so there is some $\beta<\alpha$ such that $h' <^* f_\beta$ since $g$ is an exact upper bound. If $j$ is such that $i>j$ implies $h'(i)<^* f_\beta(i)$, then this means that for all $i \in X \setminus j$, $h(i) < f_\beta(i)$, so $f_\beta <^* h$, contradicting that $h$ is an eub.\end{proof}

We will also give a definition of the square principle.

\begin{definition} If $\kappa$ is a cardinal, we say the $\square_\kappa$ holds if there is a sequence 
$\seq{C_\alpha}{\alpha<\kappa^+}$ such that the following hold for all $\alpha<\kappa^+$:

\begin{enumerate}
\item $C_\alpha$ is a club in $\alpha$,
\item for all $\beta \in \lim C_\alpha$, $C_\alpha \cap \beta = C_\beta$,
\item $\ot(C_\alpha) \le \kappa$.
\end{enumerate}
\end{definition}

We also make some use of variations in internal approachability. These variations were introduced by Foreman and Todor{\v c}evi{\' c} \cite{Foreman-Todorcevic2005} and were proved distinct by Krueger \cite{Krueger2007,  Krueger2008}.

\begin{definition} Given an uncountable regular $\kappa$ and a set $N \in [H(\theta)]^\kappa$, we say: 

\begin{itemize}

\item $N$ is \emph{internally unbounded} if $\forall x \in P_\kappa(N), \exists M \in N, x \subseteq M$,
\item $N$ is \emph{internally stationary} if $P_\kappa(N) \cap N$ is stationary in $P_\kappa(N)$,
\item $N$ is \emph{internally club} if $P_\kappa(N) \cap N$ is club in $P_\kappa(N)$,
\item $N$ is \emph{internally approachable} if there is an increasing and continuous chain $\seq{M_\xi}{\xi<\kappa}$ such that $|M_\xi|<\kappa$ and $\seq{M_\eta}{\eta<\xi} \in M_{\xi+1}$  for all $\xi<\kappa$ such that $N=\bigcup_{\xi<\kappa} M_\xi$.

\end{itemize}
\end{definition}

\begin{definition}\label{tightness-def} A model $M \prec H(\theta)$ is \emph{tight} with respect to $K=\seq{\kappa_i}{i \in I}$ if $M \cap \prod_i \kappa_i$  is cofinal in $\prod_i (\kappa_i \cap M)$ in the $<^*$-ordering. \end{definition}

\subsection{The Namba Forcing} Here we will define the most important part of the proof of \autoref{maintheorem}.

\begin{definition} Let $T$ be a tree.
\begin{enumerate}

\item  
For an ordinal $\alpha$, the set
$T_\alpha$ is the set of $t \in T$ with $\dom(t) =\alpha$.

\item  
The \emph{height} $\operatorname{ht}(T)$ of a tree $T$ is $\min\{\alpha : T(\alpha) = \emptyset\}$.

\item 

We let $[T] = \{f \colon \operatorname{ht}(T) \to \kappa : \forall \alpha < \operatorname{ht}(T), f \rest \alpha \in T\}$. 
Elements of $[T]$ are called \emph{cofinal branches}.
\item 
For
$t_1$, $t_2 \in T \cup [T]$ we write $t_1 \sqsubseteq t_2$ if $t_2 \rest \dom(t_1) = t_1$. The tree order is the relation $\sqsubseteq$. If $t = s \cup \{(\dom(s), \beta)\}$, we write $t = s {}^\frown \langle \beta \rangle$.

\item 
$T \nrest \alpha = \bigcup_{\beta<\alpha}T_\beta$.

\item 
$T \rest t = \{s \in T : s \sqsubseteq t  \vee t \sqsubseteq s\}$.

\item 
For $t \in T_\alpha$ we let $\ssucc_T(t) = \{c : c \in T_{\alpha+1} \wedge c \sqsupseteq t\}$ denote the \emph{set of immediate successors of $t$}, and $\osucc_T(t) = \{\beta : t {}^\frown \langle \beta \rangle  \in T_{\alpha+1}\}$ denote the \emph{ordinal successor set of $t$}.

\item We call $t \in T$ a \emph{splitting node} if $|\ssucc_T(t)|>1$.

\item $\stem(T)$ is the $\sqsubseteq$-minimal splitting node. 

\end{enumerate}
\end{definition}

\begin{definition}\label{laver-namba-def-rep} Fix a function $d: \omega \to \omega \setminus \{0,1\}$ such that:

\begin{enumerate}
\item For all $m \ge 2$, there are infinitely many $n$ such that $d(n) = m$;
\item If $n$ is the least number such that $d(n)=m$, then for all $k<n$, $d(k)<m$.
\end{enumerate}

The poset $\bL$ will consist of conditions $p$ such that the following hold:

\begin{enumerate}

\item $p$ is a tree consisting of finite sequences $t$.

\item For all $t \in p$ and $n \in \dom(t)$, $t(n) \in \aleph_{d(n)}$.

\item Let $t \in p$ be the unique node maximal in the ordering of $p$ such that for all $s \in p$, either $t \sqsubseteq s$ or $s \sqsubseteq t$. Then for all $t \in p$ with $t \sqsupseteq \stem(p)$, if $n = \dom(t)$, then $\{\eta:  t {}^\frown \eta \in p \}$ is a stationary subset of $\aleph_{d(n)} \cap \cof(\omega_1)$.

\end{enumerate}

The ordering on $\bL$ is given by inclusion: $p \le q$ (i.e$.$ $p$ contains more information than $q$) if and only if $p \subseteq q$.\end{definition}

\begin{definition}\label{what-stems-are} If $t \in p$ is the unique node such that for all $s \in p$, either $t \sqsubseteq s$ or $s \sqsubseteq t$, then $t$ is called the \emph{stem} of $p$ and is denoted $\stem(p)$.\end{definition}

We also have the requisite notion of fusion, which will be familiar to readers who have seen tree forcings.

\begin{definition} If $p \in \bL$, we write $n(p) := |\stem(p)|$. If $S, p \in \bL$ and $n<\omega$, we write $q \le_n p$ if $q \le p$, $\stem(q) = \stem(p)$, and for all $t$ with $|t| \le n(q) + n$, $t \in q$ if and only if $t \in p$.

We say that $\seq{p_n}{n<\omega}$ is a \emph{fusion sequence} if $p_n \ge_n p_{n+1}$ for all $n<\omega$.\end{definition}

\begin{fact} If $\seq{p_n}{n<\omega}$ is a fusion sequence of conditions in $\bL$, then $\bigcap_{n<\omega} p_n \in \bL$.\end{fact}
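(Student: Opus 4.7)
The plan is to set $q := \bigcap_{n<\omega} p_n$ and verify the three clauses of \autoref{laver-namba-def-rep} for $q$ directly, using the stabilization of finite levels that is built into the fusion order $\le_n$.

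First I would unpack the definition of $\le_n$. Because $p_n \ge_n p_{n+1}$ for every $n$, all the $p_n$ share a common stem; call it $s$, so $n(p_n) = |s|$ for every $n$. Moreover $p_{n+1}$ and $p_n$ agree at every level of height at most $|s| + n$. Iterating this comparison for all $m \ge n$ yields what I will call the \emph{stabilization lemma}: the trees $p_n, p_{n+1}, p_{n+2}, \dots$ all coincide at level $|s| + n$. Since $q$ consists of finite sequences, every element of $q$ sits at some finite level $|s| + k$, so $q$ at level $|s|+k$ literally equals $p_k$ at level $|s|+k$. In particular $s \in q$.

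With this observation in hand, clauses (1) and (2) of \autoref{laver-namba-def-rep} — that $q$ is a tree of finite sequences with $t(n) \in \aleph_{d(n)}$ — pass trivially to the intersection. For clause (3) I would first confirm that $s$ is still the stem of $q$: every node of $q$ lies in every $p_n$ and is hence $\sqsubseteq$-comparable with $s$, while the successor set of $s$ in $q$ equals the stationary successor set of $s$ in $p_1$, and therefore has cardinality greater than one. For any $t \in q$ with $t \sqsupseteq s$ and $|t| = |s|+k$, the stabilization lemma gives
\[
\{\eta : t \concat \la \eta \ra \in q\} = \{\eta : t \concat \la \eta \ra \in p_{k+1}\},
\]
which is a stationary subset of $\aleph_{d(|t|)} \cap \cof(\omega_1)$ because $p_{k+1} \in \bL$.

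The only place where one could trip is the indexing: one must use $p_{k+1}$ (and not $p_k$) to compute the successor set of a node at level $|s|+k$, because level $|s|+k+1$ is first pinned down by the comparison $p_{k+1} \ge_{k+1} p_{k+2}$. There is no deeper obstacle — in particular no issue with stationarity being destroyed ``at a limit'' — precisely because conditions in $\bL$ are trees of finite sequences, so no new levels are introduced by the fusion process and every requirement in \autoref{laver-namba-def-rep} is a condition on one such finite level.
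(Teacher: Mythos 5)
Your proof is correct: the stabilization of each finite level from index $k$ (respectively $k+1$ for successor sets) onward is exactly what makes the intersection a condition, and your indexing and the verification of stationary splitting via $p_{k+1}$ are accurate. The paper states this fact without proof, and your argument is precisely the standard one it implicitly relies on, so there is nothing to add.
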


Our poset is similar to a number of singular Namba forcings that appear in the literature \cite{Bukovsky-Coplakova1990,Cummings-Foreman-Magidor2003,Krueger2013}, but the particular properties of $\bL$ will be important for the proof of \autoref{maintheorem}.

First, there is the fact that $\bL$ is a ``Laver-style'' poset in which there is one stem as in \autoref{what-stems-are}. In fact, a similar model of Krueger has a non-Laver style Namba forcing in order \emph{not} to have any good scales \cite{Krueger2013}. This is necessary for \autoref{cummings-magidor-strong-decision-cdot} below and will be used in \autoref{section-eubs} below to derive exact upper bounds that are added by the forcing. In \autoref{section-eubs}, we will use the fact that the splitting sets are stationary rather than merely cofinal because we need a normal ideal for an application of Fodor's Lemma. The function $d$ is used to ensure that the $\aleph_n$'s are singularized to have cofinality $\omega$, which will be used to apply idea of Cummings et al$.$ to get $\square_{\aleph_n}$-sequences in the final model. Finally, we need our forcing to split into sets concentrating on cofinality $\omega_1$, because the exact upper bounds that are added must stabilize to output points of cofinality $\omega_1$, so that the characterization of goodness from \autoref{goodness-characterized} can be applied.

Now we can collect some properties of our Namba forcing for which existing arguments suffice without alteration.

\begin{fact} For all $n$ such that $1<n<\omega$, $\bL$ forces that $\cf(\aleph_n^V)=\omega$.\end{fact}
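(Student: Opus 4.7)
The plan is to extract an explicit cofinal $\omega$-sequence from the generic. Since $\bL$ is a Laver-style tree forcing, the generic filter $G$ determines a unique cofinal branch $g \defeq \bigcup\{\stem(p) : p \in G\}$, which is an element of $\prod_{k<\omega}\aleph_{d(k)}^V$. By clause (1) in the definition of $d$, for each $n$ with $1<n<\omega$ the set $K_n \defeq \{k<\omega : d(k)=n\}$ is infinite, and $g(k) \in \aleph_n^V$ for every $k \in K_n$. To conclude that $\cf^{V[G]}(\aleph_n^V)=\omega$, it suffices to show that $\{g(k):k \in K_n\}$ is $V[G]$-unbounded in the limit ordinal $\aleph_n^V$; this reduces to a density argument in the ground model.

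Fix $p \in \bL$ and $\alpha<\aleph_n^V$; the goal is to produce $q \le p$ and $k \in K_n$ with $q \Vdash g(k)>\alpha$. Choose any $k \in K_n$ with $k > |\stem(p)|$ and define
\[ q \defeq \{t \in p : |t| \le k \text{ or } t(k) > \alpha\}. \]
This is a subtree of $p$. Since $|\stem(p)| < k$, the tree $q$ still splits at $\stem(p)$, so $\stem(q) = \stem(p)$. Below level $k$, $q$ and $p$ agree, and hence the successor sets are unchanged and therefore stationary in the appropriate $\aleph_{d(m)} \cap \cof(\omega_1)$. At a node $t \in q$ of length $k$, $\osucc_q(t) = \osucc_p(t)\setminus(\alpha+1)$ is obtained from a stationary subset of $\aleph_n \cap \cof(\omega_1)$ by removing a bounded piece, hence is still stationary. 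Above level $k$, each surviving subtree of $q$ matches that of $p$, so the successor sets once again agree. Thus $q \in \bL$, and by construction every branch of $q$ has $k$-th value above $\alpha$, so $q \Vdash g(k)>\alpha$.

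The argument is routine, and I do not anticipate any genuine obstacle. The only place where the specific definition of $\bL$ is used is the observation that a stationary subset of $\aleph_n \cap \cof(\omega_1)$ remains stationary after removing a bounded piece; in fact the same argument would go through under the weaker assumption that the successor sets are merely unbounded.
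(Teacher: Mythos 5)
Your proof is correct and follows the same route as the paper's (much terser) argument: exploit the infinitely many levels $k$ with $d(k)=n$ and run a genericity/density argument to see that the generic branch's values at those levels are cofinal in $\aleph_n^V$. Your write-up just fills in the details the paper leaves implicit, including the check that pruning the level-$k$ successor sets to ordinals above $\alpha$ preserves stationarity, which is exactly the point where the definition of $\bL$ is used.
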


\begin{proof} Observe that for all $m<\omega$ and $t \in T$, there are infinitely many $n$ with $d(m)=n$ such that for some $t' \sqsupseteq t$, $\{\eta:t' {}^\frown \eta \in T\}$ has cardinality $\aleph_m$. This then comes from the fact that there are infinitely many $k$ such that $d(k)=n$: A genericity argument defines a cofinal function whose domain consists of these $k$'s.\end{proof}

\begin{fact}\label{namba-succ-sing-pres} $\bL$ forces that $\aleph_{\omega+1}^V$ is an ordinal of cardinality and cofinality $\ge \aleph_1$.\end{fact}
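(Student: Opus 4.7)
The plan is to establish two things: that $\omega_1^V$ is preserved (which yields $|\aleph_{\omega+1}^V|^{V[G]} \ge \aleph_1$, since $\omega_1^V \subseteq \aleph_{\omega+1}^V$) and that no $\omega$-sequence cofinal in $\aleph_{\omega+1}^V$ is added (which yields $\cf(\aleph_{\omega+1}^V)^{V[G]} \ge \aleph_1$). Both are consequences of a single Prikry-style strong decision lemma for $\bL$: for every $p \in \bL$ and every $\bL$-name $\dot\alpha$ for an ordinal, there is $q \le_0 p$ such that for each $t \in q$ extending $\stem(q)$, there is a level $n_t$ with $q\rest s$ deciding $\dot\alpha$ for every $s \sqsupseteq t$ in $q$ of length $n_t$. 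This is proved by a fusion $p = p_0 \ge_0 p_1 \ge_1 \cdots$: at stage $n$, for each node $t \in p_n$ at the $n^\text{th}$ splitting level above the stem, one either refines $p_n \rest t$ to a condition with stem $t$ deciding $\dot\alpha$, or, if no such refinement exists, uses stationary pigeonhole on $\osucc_{p_n}(t) \subseteq \aleph_{d(|t|)} \cap \cof(\omega_1)$ to push the decision one level further. The fusion limit $q = \bigcap_n p_n$ lies in $\bL$ and witnesses the claim.

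For preservation of $\omega_1$, suppose $p \Vdash \dot f \colon \omega \to \omega_1$. Iterating the strong decision lemma via a fusion yields $q \le p$ such that for each $n$ and each $t \in q$ at level $|\stem(q)|+n$, $q \rest t$ decides $\dot f(n)$ as some $\alpha_t < \omega_1$. For each $t$ at level $|\stem(q)|+n-1$, consider the map $\eta \in \osucc_q(t) \mapsto \alpha_{t\, {}^\frown \eta} \in \omega_1$. Since $\osucc_q(t)$ is stationary in $\aleph_{d(|t|)} \cap \cof(\omega_1)$ and $\cf(\aleph_{d(|t|)}) \ge \aleph_2 > \omega_1$, the intersection of any $\omega_1$ many clubs in $\aleph_{d(|t|)}$ remains club, so stationary pigeonhole yields a stationary $A_t \subseteq \osucc_q(t)$ on which $\alpha_{t\, {}^\frown \eta}$ takes a constant value. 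Iterating this thinning down through all levels via a second fusion produces a condition forcing $\dot f[\omega]$ into a countable subset of $\omega_1$; hence $\omega_1$ is preserved.

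For the cofinality assertion, let $\dot g$ be an $\bL$-name for a function $\omega \to \aleph_{\omega+1}^V$. Apply the strong decision lemma to find $q \le p$ such that for each $n$ and each $t \in q$ at level $|\stem(q)|+n$, $q \rest t$ decides $\dot g(n)$ as some $\beta_t < \aleph_{\omega+1}^V$. The number of such $t$ is at most $\prod_{k<n} \aleph_{d(|\stem(q)|+k)} < \aleph_\omega$, and summing over $n < \omega$ gives $|\{\beta_t : t \in q\}| \le \aleph_\omega$, so this set is bounded by some $\beta < \aleph_{\omega+1}^V$. Then $q \Vdash \range(\dot g) \subseteq \beta$, and no $\omega$-cofinal sequence in $\aleph_{\omega+1}^V$ is added. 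The main obstacle is the strong decision lemma itself: one must carefully ensure that, through the iterated thinning and in particular at limit stages of the fusion, all successor sets remain stationary (not merely unbounded) so that the fusion limit is a valid condition in $\bL$; this is exactly where the stationarity hypothesis in the definition of $\bL$ is consumed.
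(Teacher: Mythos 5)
The genuine gap is in your second paragraph, the claimed fusion-plus-Fodor proof that $\omega_1$ is preserved. After your first fusion, the value of $\dot f(n)$ depends on the node of $q$ at level $|\stem(q)|+n$; to get $\range(\dot f)$ into a \emph{countable} set you must homogenize this value all the way down to the stem, i.e., for each $n$ thin the successor sets at \emph{every} level below $n$. Doing this for all $n<\omega$ forces you to shrink a fixed successor set $\osucc_q(t)$ to a stationary subset infinitely often, and a $\subseteq$-decreasing $\omega$-sequence of stationary subsets of $\aleph_{d(|t|)}\cap\cof(\omega_1)$ need not have stationary (or even nonempty) intersection; countable completeness of the nonstationary ideal only controls unions of nonstationary sets, and a fusion argument only permits finitely many thinnings of each level (at stages up to that level). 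What the fusion honestly yields is that $\dot f(n)$ is decided by a node at some finite level, so $\range(\dot f)$ lies in a ground-model set of size at most $\aleph_\omega$ --- which says nothing about $\omega_1$, since $\aleph_\omega$-many countable ordinals can exhaust $\omega_1$. This is precisely why preservation of $\omega_1$ (in fact of stationary subsets of $\omega_1$) for $\bL$ is \emph{not} proved this way in the paper but by the open-game/Gale--Stewart argument of Cummings--Magidor and Krueger, stated separately as \autoref{namba-omega1-pres}.

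Fortunately the Fact does not need $\omega_1$-preservation at all: since $\omega_1^{V[G]}\le\omega_1^V<\aleph_{\omega+1}^V$, the ordinal $\aleph_{\omega+1}^V$ contains $\omega_1^{V[G]}$ as a subset, so its cardinality in $V[G]$ is automatically $\ge\aleph_1$; the whole content is the cofinality claim. Your third paragraph handles that exactly as the paper's (very brief) proof does: a fusion using the Prikry-style lemma (\autoref{cummings-magidor-prikry-cdot}), at most $\aleph_\omega$-many possible decided values, hence boundedness below $\aleph_{\omega+1}^V$ by its regularity in $V$. Two small corrections there: you cannot in general arrange that $\dot g(n)$ is decided exactly at level $|\stem(q)|+n$, since stem-preserving decision (\autoref{cummings-magidor-strong-decision-cdot}) is available only for names of ordinals below $\omega_1$, where pigeonhole on the values is possible; let the deciding level be node-dependent and higher, as in your own statement of the lemma --- the counting is unaffected because every finite level of $q$ has fewer than $\aleph_\omega$ nodes. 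Likewise, in the proof of the decision lemma itself the pigeonhole must be applied to the finite auxiliary data (the deciding levels), not to the ordinal values.
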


\begin{proof}[Sketch of Proof] This comes from a fusion argument using e.g$.$ \autoref{cummings-magidor-prikry-cdot} below, where we build $p' \le p$ with at most $|p'|=\aleph_\omega$-many possible decisions for $\dot \alpha$.\end{proof}

\begin{fact}\label{namba-omega1-pres} Then $\bL$ preserves stationary subsets of $\aleph_1$.\end{fact}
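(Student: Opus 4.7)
The plan is to establish semi-properness of $\bL$, which is well-known to imply preservation of stationary subsets of $\aleph_1$. The key intermediate step is the following \emph{bounding lemma}: for every $p \in \bL$ and every $\bL$-name $\dot{\alpha}$ for an ordinal below $\omega_1$, there is $q \leq p$ with $\stem(q) = \stem(p)$ and a single ordinal $\gamma < \omega_1$ such that $q \Vdash \dot{\alpha} = \gamma$.

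I would prove this via one application of Fodor's lemma. For each $\eta$ in the stationary set $\osucc_p(\stem(p)) \s \aleph_{d(|\stem(p)|)} \cap \cof(\omega_1)$ (with $\aleph_{d(|\stem(p)|)} \geq \aleph_2$), extend $p \rest (\stem(p) \concat \la \eta \ra)$ to some $s_\eta$ deciding $\dot{\alpha}$ into a specific value $\gamma_\eta < \omega_1$. The assignment $\eta \mapsto \gamma_\eta$ is strictly regressive on a stationary subset of a regular cardinal, since $\gamma_\eta < \omega_1 \le \eta$. Fodor's lemma therefore yields a stationary $A \subseteq \osucc_p(\stem(p))$ and an ordinal $\gamma < \omega_1$ such that $\gamma_\eta = \gamma$ for all $\eta \in A$, and $q := \{s \in p : s \sqsubseteq \stem(p)\} \cup \bigcup_{\eta \in A} s_\eta$ is a condition with $\stem(q) = \stem(p)$ forcing $\dot{\alpha} = \gamma$.

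To derive semi-properness, fix a countable $M \prec H(\theta)$ containing $\bL$ and $p \in \bL \cap M$. Enumerating the $\bL$-names in $M$ for countable ordinals as $\seq{\dot{\alpha}_n}{n<\omega}$, I would build a fusion sequence $p = p_0 \geq_0 p_1 \geq_1 p_2 \geq_2 \cdots$ of conditions in $M$ such that at stage $n$, the condition $p_{n+1}$ is obtained from $p_n$ by applying the bounding lemma below each level-$(|\stem(p)|+n)$ node $t$ of $p_n$ to force $\dot{\alpha}_n$ into a single value in $M \cap \omega_1$. The fusion $q = \bigcap_n p_n$ is a condition by the fusion fact above, satisfies $q \leq p$, and forces $\dot{\alpha}_n \in M \cap \omega_1$ for every $n$; hence $q$ is an $(M,\bL)$-semi-generic condition. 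Semi-properness of $\bL$ yields preservation of stationary subsets of $\aleph_1$ by the standard argument.

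The main obstacle is in the semi-properness step: at each fusion stage, the values $\gamma_t$ provided by the bounding lemma must all lie in $M \cap \omega_1$ uniformly across the (often uncountably many) level-$(|\stem(p)|+n)$ nodes $t$ of $p_n$. This is secured by carrying out the bounding lemma's construction inside $M$ via $M$-definable choices of strengthenings $s_\eta$ and Fodor-selected stationary sets, invoking the elementarity of $M$ so that each $\gamma_t$ lies in $M$. The Laver-style single-stem structure of $\bL$ is essential here: it localizes each application of the bounding lemma cleanly above a single node, avoiding the bookkeeping conflicts that would arise for forcings with multiple stems at each condition.
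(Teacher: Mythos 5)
There is a genuine gap, and it occurs at the heart of your strategy: semi-properness of $\bL$ is not a theorem of $\ZFC$, so it cannot be the route to this fact. In your fusion argument you need, at stage $n$, that the value $\gamma_t$ decided below \emph{each} node $t$ of the relevant level lies in $M \cap \omega_1$. Elementarity of $M$ only gives you that the assignment $t \mapsto \gamma_t$ (if chosen definably from parameters in $M$) is an element of $M$; it does not put the individual values $\gamma_t$ into $M$ for the uncountably many nodes $t \notin M$, and there is no reason they should be --- indeed the approximation argument of \autoref{mini-approx-thm-laver} exploits exactly the opposite phenomenon, arranging the decided objects below distinct successors to be pairwise distinct. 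Trapping stationarily many successors whose decided values fall into a fixed countable $M$ is precisely the content of strong Chang-conjecture-type hypotheses, and by Shelah's work Namba-style forcings are semi-proper essentially only under such hypotheses. Preservation of stationary subsets of $\aleph_1$, by contrast, is a $\ZFC$ fact for $\bL$, and the paper obtains it by the weaker game-theoretic argument of Cummings--Magidor and Krueger (the open game as in \autoref{game-claim}, using that unions of $\aleph_1$-many sets in the splitting ideals stay in the ideals): one fixes a stationary $S \subseteq \omega_1$ and a name for a club, finds a suitable $k \in S$ for which Player $\pII$ wins the game, and fuses along the winning strategy to force $k$ into the club. That argument never needs a single countable model to absorb all decided values.

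Your ``bounding lemma'' is true --- it is \autoref{cummings-magidor-strong-decision-cdot} with trivial second coordinate --- but your one-application-of-Fodor proof of it also has a flaw specific to the Laver-style definition of $\bL$. You extend $p \rest (\stem(p) \concat \la \eta \ra)$ to an \emph{arbitrary} $s_\eta$ deciding $\dot\alpha$ and then glue. The stems of the $s_\eta$ may be much longer than $\stem(p)\concat\la\eta\ra$, so in the union the nodes lying strictly between $\stem(p)\concat\la\eta\ra$ and $\stem(s_\eta)$ have a single successor, violating clause (3) of \autoref{laver-namba-def-rep}, which demands stationary splitting at \emph{every} node above the stem; hence your $q$ need not be a condition. (This gluing would be legal for the Miller-style poset $\M$ of \autoref{miller-namba-def-rep}, but not for $\bL$.) The repair is exactly the paper's bad-node/fusion argument: one must obtain the deciding conditions as $\le_0$-extensions, which is the statement being proved, so the argument proceeds by contradiction, showing that badness of a node propagates to an ideal-positive set of immediate successors and fusing to a condition all of whose nodes are bad.
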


This is a variation of the arguments presented by Cummings-Magidor \cite{Cummings-Magidor2011} and Krueger \cite{Krueger2013} using an open game and the fact that the splitting nodes all split into sets of size $>\aleph_1$. The component of \autoref{mini-approx-thm-laver} below starting with \autoref{game-claim} is a variation of this argument.

\section{Some Technical Ideas Needed for the Proof}

This section will present the main technical ideas that are more or less new to this paper.

\subsection{Some Approximation for the Namba Forcing}\label{namba-approx-sec}

The first technical idea that we will discuss is an approximation-like result that holds for our forcing $\bL$. The approximation property originates in work of Hamkins \cite{Hamkins2001} and often comes up when obtaining various so-called compactness properties, like the tree property, square principles, and so on.

\begin{theorem}\label{mini-approx-thm-laver} Assume $2^{\aleph_\omega}=\aleph_{\omega+1}$. Let $\dot{\U}$ be a $\P$-name for a countably closed forcing. Then if $\dot{F}$ is a $\bL\ast \dot{\U}$-name for a function with domain $\omega_1$ that is cofinal in $\nu$ where $\cf^V(\nu) \ge \aleph_{\omega+1}^V$, then $\bL \ast \dot{\U}$ forces that there is some $i<\omega_1$ such that $\dot{F} \rest i  \notin V$.\end{theorem}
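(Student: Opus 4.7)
My plan is to argue by contradiction. Suppose some $(p_0, \dot u_0) \in \bL \ast \dot \U$ forces $\dot F \rest i \in V$ for every $i < \omega_1$. I aim to decide $\dot F$ below an extension, producing $(p^*, \dot u^*) \leq (p_0, \dot u_0)$ and $F^* \in V$ with $(p^*, \dot u^*) \Vdash \dot F = \check F^*$. Since $\dot F$ is forced cofinal in $\nu$, $F^*$ would be a cofinal $\omega_1 \to \nu$ map in $V$, giving $\cf^V(\nu) \leq \omega_1 < \aleph_{\omega+1}^V$, contradicting the hypothesis.

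The backbone of the construction is a Laver-style $\omega$-length fusion on $\bL$, interleaved with descending choices in $\dot \U$ (with lower bounds via countable closure). The basic fact I use is that $(q,\dot v) \Vdash \dot F \rest i \in \check V$ implies some extension forces $\dot F \rest i$ to equal a specific $g \in V$. At fusion stage $n$, starting from $(p_n, \dot u_n)$, I apply the strong decision principle referenced by the excerpt (\autoref{cummings-magidor-strong-decision-cdot}) at each splitting node $t$ at level $\leq n(p_n)+n$, obtaining a candidate decision for $\dot F \rest i_n$ below $(p_n \rest t, \dot u_n)$. The stationarity of the successor sets $\{\eta : t \conc \eta \in p_n\}$ in $\aleph_{d(n)} \cap \cof(\omega_1)$, together with the hypothesis $2^{\aleph_\omega} = \aleph_{\omega+1}$ (which bounds the cardinality of the candidate-space), is what I would use to uniformize these decisions across stationary subsets so that a single $g_n \in V$ can be selected. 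This produces a $\leq_n$-refinement $(p_{n+1}, \dot u_{n+1})$ with $(p_{n+1}, \dot u_{n+1}) \Vdash \dot F \rest i_n = \check g_n$. After $\omega$ stages, $p^* = \bigcap_n p_n \in \bL$ and a lower bound $\dot u^*$ of the $\dot u_n$'s exists by the fusion fact and countable closure of $\dot \U$.

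The main obstacle is that a single $\omega$-length fusion only decides $\dot F$ on indices with $\sup_n i_n < \omega_1$, whereas $F^*$ must have domain all of $\omega_1$. The excerpt signals the expected resolution: an open game argument in the spirit of \autoref{game-claim} (and of the Cummings--Magidor and Krueger stationary-preservation arguments cited for \autoref{namba-omega1-pres}). I expect to cast the construction as an open game of length $\omega$ in which one player plays refining conditions plus commitments and the other tries to prevent global consistency. A winning strategy for the refining player, assembled from fusion, strong decision, and the uniformization above, should package the $\omega$-many partial commitments into a total commitment to $F^* : \omega_1 \to \nu$. The hardest technical point is to verify that the open winning condition truly enforces a commitment on all of $\omega_1$, not merely on a bounded countable portion; this is where the high cofinality $\cf^V(\nu) \geq \aleph_{\omega+1}^V$ and the stationary-preservation character of $\bL$ must combine to close the argument.
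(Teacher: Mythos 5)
There is a genuine gap here, and the overall route cannot be repaired as proposed. Your plan is to fully decide $\dot F$ below a single condition $(p^*,\dot u^*)$ and then contradict $\cf^V(\nu)\ge\aleph_{\omega+1}^V$. But a fusion in $\bL$ has length $\omega$, so it can only control the values $\dot F\rest i_n$ for countably many indices; at the end you have decided at most $\dot F\rest\delta$ for $\delta=\sup_n i_n<\omega_1$, which is perfectly compatible with the theorem and yields no contradiction. Your closing appeal to an open game does not bridge this: in the actual argument the game is used only to find a single countable ordinal $k$ below which Player $\pII$ can always keep the relevant indices, not to package commitments on all of $\omega_1$ (no length-$\omega$ game can do that, and $\bL$ is not even countably closed, so there is no way to run $\omega_1$-many decision steps). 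Moreover, your uniformization step is false as stated: at a splitting node the successors form a stationary subset of some $\aleph_{d(n)}$, while the candidate values of $\dot F\rest i_n$ range over a set of size up to $\aleph_{\omega+1}$, so no pressing-down or pigeonhole argument stabilizes the decided value to a single $g_n$ across a stationary set; after gluing you do not get $(p_{n+1},\dot u_{n+1})\Vdash\dot F\rest i_n=\check g_n$. (In the paper, \autoref{cummings-magidor-strong-decision-cdot} stabilizes only an ordinal below $\omega_1$, which is why the completeness of the splitting ideal suffices there; the hypothesis $2^{\aleph_\omega}=\aleph_{\omega+1}$ is used to see that each $\dot F\rest i$ is forced to be bounded in $\nu$, not to shrink a candidate space for uniformization.)

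The missing idea is in fact the opposite of uniformization. The paper arranges, via the formula $\varphi(i,q,\dot d)$, that the possible values of $\dot F\rest i$ below distinct immediate successors of the stem lie in pairwise \emph{disjoint} sets $A_\alpha$, so that a decided value of an initial segment reveals which successor the generic branch took. Iterating this along a fusion, with all indices $i_t$ kept below the single countable $k$ supplied by the game of \autoref{game-claim}, produces $(q,\dot d)$ such that any extension deciding $\dot F\rest k$ to be a ground-model object would allow one to reconstruct the entire generic branch of $\bL$ inside $V$, which is impossible; hence $(q,\dot d)\Vdash\dot F\rest k\notin V$, directly contradicting the assumption that all proper initial segments are in $V$. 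Without this coding mechanism, and with the contradiction aimed at $\cf^V(\nu)$ rather than at the newness of one countable initial segment, the proposal does not go through.
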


We are formulating the lemma as such---in terms of a two-step iteration---so it fits the analog that is already in the literature \cite{Levine2023b}. This would be useful for applications where some guessing before is sought for substructures of some $H(\theta)$ where $\theta>\aleph_{\omega+1}$. The point is that $\bL$ would not collapse $H(\theta)^V$ on its own, and would be paired with a L{\'e}vy collapse.

Since we are dealing with two-step iterations we will write $(p,\dot{c}) \le_0 (q,\dot{d})$ if $(p,\dot{c}) \le (q,\dot{d})$ and $p \le_0 q$.

We need versions of the Cummings-Magidor facts that account for an iteration following the initial Namba forcing.

\begin{proposition}\label{cummings-magidor-strong-decision-cdot} Let $\dot{\U}$ be a $\bL$-name for a forcing poset. Suppose that $(p,\dot{c}) \in \bL \ast \dot{\U}$ and that $\dot{\delta}$ is a name for an ordinal below $\omega_1$. Then there is some $(q,\dot{d}) \le_0 (p,\dot{c})$ such that $(q,\dot{d})$ decides a value for $\dot{\delta}$.\end{proposition}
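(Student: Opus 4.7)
I will argue by contradiction, exploiting that every node above $\stem(p)$ in a condition of $\bL$ splits into a stationary subset of $\aleph_{d(|t|)} \cap \cof(\omega_1)$ with $d(|t|) \ge 2$, together with $\omega_1$-completeness of the nonstationary ideal on $\aleph_m$ for $m \ge 2$. For each $t \in p$ with $t \sqsupseteq \stem(p)$, the plan is to consider
\[
E_t := \{\delta < \omega_1 : \exists (r, \dot{d}) \le_0 (p \rest t, \dot{c}),\ (r, \dot{d}) \Vdash \dot{\delta} = \delta\}.
\]
The proposition amounts to $E_{\stem(p)} \neq \emptyset$, and I will suppose toward contradiction that $E_{\stem(p)} = \emptyset$.

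The core step will be to prove: \emph{if $E_t = \emptyset$, then $B_t := \{\beta \in \osucc_p(t) : E_{t \concat \langle\beta\rangle} \neq \emptyset\}$ is nonstationary}. To establish this, I assume $B_t$ is stationary, pick for each $\beta \in B_t$ a value $\delta_\beta \in E_{t \concat \langle\beta\rangle}$ and a witness $(r_\beta, \dot{d}_\beta) \le_0 (p \rest (t \concat \langle\beta\rangle), \dot{c})$ forcing $\dot{\delta} = \delta_\beta$. By $\omega_1$-completeness of the nonstationary ideal on $\aleph_{d(|t|)}$, the map $\beta \mapsto \delta_\beta$ is constant equal to some $\delta^*$ on a stationary $S \subseteq B_t$. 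I will then glue the witnesses into a single condition: define $q \in \bL$ by $\stem(q) = t$, $\osucc_q(t) = S$, and $q \rest (t \concat \langle\beta\rangle) = r_\beta$ for $\beta \in S$; and define $\dot{d}$ as the $\bL$-name satisfying $q \rest (t \concat \langle\beta\rangle) \Vdash \dot{d} = \dot{d}_\beta$ for each $\beta \in S$. Then $(q, \dot{d}) \le_0 (p \rest t, \dot{c})$ and $(q, \dot{d}) \Vdash \dot{\delta} = \delta^*$, placing $\delta^* \in E_t$ and contradicting the hypothesis.

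Armed with the core step, I will recursively define a subtree $q \le p$ with $\stem(q) = \stem(p)$ by setting $\osucc_q(t) := \{\beta \in \osucc_p(t) : E_{t \concat \langle\beta\rangle} = \emptyset\}$ for each $t \in q$. Starting from $E_{\stem(p)} = \emptyset$, induction along $q$ keeps $E_t = \emptyset$ at every node, so by the core step each $\osucc_q(t) = \osucc_p(t) \setminus B_t$ is stationary, hence $q \in \bL$. By density there is some $(r, \dot{e}) \le (q, \dot{c})$ deciding $\dot{\delta}$, say $(r, \dot{e}) \Vdash \dot{\delta} = \delta_0$. Let $t^* := \stem(r)$; then $t^* \in r \subseteq q$ and $(r, \dot{e}) \le_0 (p \rest t^*, \dot{c})$, so $\delta_0 \in E_{t^*}$, contradicting $E_{t^*} = \emptyset$ from the construction of $q$.

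The main obstacle I expect is the branch-wise definition of $\dot{d}$ in the core step: I need $\{q \rest (t \concat \langle\beta\rangle) : \beta \in S\}$ to form a maximal antichain below $q$, so that the piecewise definition simultaneously yields $q \Vdash_{\bL} \dot{d} \le \dot{c}$ and $(q, \dot{d}) \Vdash \dot{\delta} = \delta^*$. Maximality follows because any extension of $q$ has its stem passing through some $t \concat \langle\beta\rangle$ with $\beta \in S$, which is ensured by $\osucc_q(t) = S$; verifying these details cleanly is the technical heart of the argument.
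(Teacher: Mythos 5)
Your proposal is correct and follows essentially the same route as the paper: your ``$E_t=\emptyset$'' is exactly the paper's notion of a bad node, your core step (a bad node has stationarily many bad successors, proved by stabilizing the decided value on a stationary set and gluing the witnessing conditions and names along the resulting maximal antichain) is the paper's key claim, and the all-bad subtree followed by a deciding extension yields the same final contradiction. One small correction: since $\dot{\delta}$ ranges over $\omega_1$, stabilizing $\beta \mapsto \delta_\beta$ on a stationary set requires $\aleph_2$-completeness of the nonstationary ideal on $\aleph_{d(|t|)}$ (which holds because $d(|t|) \ge 2$), not merely countable completeness, and likewise an extension of $q$ may have stem equal to $t$ itself, so what you really use is that the pieces $q \rest (t {}^\frown \langle \beta \rangle)$, $\beta \in S$, are predense below $q$.
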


The proof is essentially the same as in the version of the lemma omitting $\dot{\U}$, except that at a particular step we use a gluing argument.

\begin{proof}[Proof of \autoref{cummings-magidor-strong-decision-cdot}] Suppose that the proposition is false. We say that a node $t$ is \emph{bad} if there is no $(q,\dot{d}) \le (p,\dot{c})$ with $q \le_0 p \rest t$ such that $(q,\dot{d})$ decides a value for $\dot{\delta}$. Hence, we are working under the assumption that $\stem p$ is bad.

We will construct a fusion sequence through the following: If $t$ is bad and $I$ is the ideal that defines the splitting for $t$, then the set of $\alpha \in \osucc_p(t)$ such that $t {}^\frown \la \alpha \ra$ is bad is $I$-positive. Otherwise we have a set $W \in I^+$ such that $W \subseteq \osucc_p(t)$ and all $\alpha \in W$ are not bad. Then for each $\alpha \in W$ we choose some $(q_\alpha,\dot{d}_\alpha) \le (p \rest t,\dot{c})$ with $q_\alpha \le_0 p \rest t$ deciding a value $\beta_\alpha$ for $\dot{\delta}$. By $\omega_2$-completeness, there is some $W' \in I^+$ with $W' \subseteq W$ and some $\gamma<\tau$ such that for all $\alpha \in W'$, $\beta_\alpha = \gamma$. Now let $q = \bigcup_{\alpha \in W'}q_\alpha$ and let $\dot{d}$ be the name that glues together the $\dot{d}_\alpha$'s below $q$. Then $(q,\dot{d})$ forces that $\dot{\delta}=\gamma$ and $q \le_0 p \rest t$, hence $t$ is not in fact bad.

We are then able to construct some $r \le_0 p$ such that all $t \in r$ are bad. Find some $(r',\dot{d}) \le (r,\dot{c})$ deciding a value for $\dot{\delta}$. If $s = \stem r'$, then this contradicts the fact that $t \in r'$ is bad.\end{proof}

\begin{proposition}\label{cummings-magidor-prikry-cdot} Let $\dot{\U}$ be a $\bL$-name for a forcing poset. Suppose $(p,\dot{c}) \in \P \ast \dot{\U}$ and suppose $\dot{x}$ is a name for an element of $V$. There is some $(q,\dot{d}) \le_0 (p,\dot{c})$ and some $h_n$ such that for all $t \in q$ with $|t|=h_n$, $(q \rest t,\dot{d})$ decides a value for $\dot{x}$.\end{proposition}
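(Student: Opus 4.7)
The proof follows the fusion-plus-gluing strategy from \autoref{cummings-magidor-strong-decision-cdot}, with the twist that $\dot x$ names an arbitrary element of $V$ (not merely an ordinal below $\omega_1$), so instead of deciding a single value I will arrange a uniform finite level at which every node decides \emph{some} value. The guiding definition is: call a node $t \in p$ extending $\stem(p)$ \emph{tree-good} if there are $(q'', \dot d'') \le_0 (p \rest t, \dot c)$ and $h \in \omega$ with $h \ge |t|$ such that every $s \in q''$ with $|s| = h$ has $(q'' \rest s, \dot d'')$ deciding a value for $\dot x$; otherwise $t$ is \emph{tree-bad}. The proposition says exactly that $\stem(p)$ is tree-good.

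The key subclaim is: if $t$ is tree-bad, then the set of $\alpha \in \osucc_p(t)$ with $t {}^\frown \la \alpha \ra$ tree-good is nonstationary in $\aleph_{d(|t|)} \cap \cof(\omega_1)$. For the contrapositive, assume this set $S$ is stationary and pick witnesses $(q_\alpha, \dot d_\alpha, h_\alpha)$ for each $\alpha \in S$. Since each $h_\alpha \in \omega$ and the nonstationary ideal restricted to $\cof(\omega_1)$ is $\omega_2$-complete, some stationary $S' \subseteq S$ has $h_\alpha = h$ on $S'$ for a single $h$. Now glue: let $q = \{s \in p : s \sqsubseteq t\} \cup \bigcup_{\alpha \in S'} q_\alpha$ so that $\stem(q) = t$ and $\osucc_q(t) = S'$, and let $\dot d$ be the name that coincides with $\dot d_\alpha$ below $q \rest t {}^\frown \la \alpha \ra$. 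Then $(q, \dot d) \le_0 (p \rest t, \dot c)$, and every $s \in q$ with $|s| = h$ extends some $t {}^\frown \la \alpha \ra$ with $\alpha \in S'$, so $(q \rest s, \dot d) = (q_\alpha \rest s, \dot d_\alpha)$ decides $\dot x$; hence $t$ is tree-good, a contradiction.

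Equipped with the subclaim, build $q \le_0 p$ by fusion: at stage $n$, restrict the successor set of each node at level $n(p) + n$ in the current approximation to its tree-bad successors, which is $I$-positive by the subclaim, and note that tree-badness is preserved under such shrinking (any witness to tree-goodness in the refined tree is also a witness in $p$). The fusion limit $q$ then has every node above $\stem(q) = \stem(p)$ tree-bad for $(p, \dot c)$. To derive the final contradiction, use that $\dot x$ names an element of $V$ to pick any $(q', \dot d') \le (q, \dot c)$ deciding a value $v$ for $\dot x$; since $q' \le_0 q \rest \stem(q') \le_0 p \rest \stem(q')$, setting $h = |\stem(q')|$ shows that $(q', \dot d', h)$ witnesses tree-goodness of $\stem(q')$ (vacuously, as $\stem(q')$ is the unique node of $q'$ at that level), contradicting tree-badness of $\stem(q')$.

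The main obstacle is the height-uniformization in the gluing step: one needs to fix a single $h$ that works for stationarily many $\alpha$ simultaneously before the subtrees $q_\alpha$ can be combined into a single condition, and this is exactly where $\omega_2$-completeness of the nonstationary ideal on $\cof(\omega_1)$ is essential. Once this is in place, the rest of the argument is bookkeeping analogous to the proof of \autoref{cummings-magidor-strong-decision-cdot}.
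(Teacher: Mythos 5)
Your proposal is correct and follows essentially the same route as the paper's proof: the paper likewise defines badness of a node via the existence of a $\le_0$-extension with a uniform finite level deciding $\dot{x}$, uses completeness of the splitting ideal to uniformize the level and glue witnesses over stationarily many successors, fuses to a condition all of whose nodes are bad, and derives the contradiction from any deciding extension via its stem. The only cosmetic differences are that you state the successor claim in the (equivalent, slightly stronger) form ``good successors are nonstationary'' and spell out the vacuous-witness step at the stem, both of which the paper leaves implicit by reference to the preceding proposition.
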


\begin{proof} We call a node $t \in p$ \emph{bad} if it not the case that there is $(q,\dot{d}) \le (p,\dot{c})$ with $q \le_0 p \rest t$ and $n<\omega$ such that every $s \in q$ with $|s|=n$ is such that $(q \rest s,\dot{d})$ decides a value for $\dot{x}$.

We argue that if $t$ is bad and $I$ is its splitting ideal, then the set of $\alpha \in \osucc_p(t)$ such that $t {}^\frown \la \alpha \ra$ is bad is $I$-positive. Otherwise there is an $I$-positive $W \subseteq \osucc_p(t)$ such that for all $\alpha \in W$, there is $q_\alpha \le_0 p \rest t$, $n_\alpha$, and $\dot{d}_\alpha$ such that for every $s \in q_\alpha$ with $|s|=n_\alpha$, $(q_\alpha \rest s,\dot{d}_\alpha)$ decides $\dot{x}$. We find some $I$-positive $W' \subseteq W$ and some $n<\omega$ such that for all $\alpha \in W'$, $n_\alpha = n$. Then let $q = \bigcup_{\alpha \in W'}q_\alpha$ and let $\dot{d}$ be the gluing of the $\dot{d}_\alpha$'s below $q$. Then we can see that $q \le_0 p \rest t$, and hence that $t$ cannot be bad.

Then we obtain an overall contradiction in a manner similar to the proof of \autoref{cummings-magidor-strong-decision-cdot}.\end{proof}

This proof uses ideas from the result that the classical version of Namba forcing (from Jech's textbook \cite[Chapter 28]{Jech2003}) consistently has the weak $\omega_1$-approximation property \cite{Levine2023b}, some material from which is to some extent repeated here. The main changes are that we must alter the statement to suit our situation, and that we must apply it to a Laver-style Namba forcing.

\begin{proof} Suppose for contradiction that $\dot{F}:\omega_1 \to \nu$ is an $\bL$-name for a (necessarily) new cofinal subset of $\nu$ of order-type $\omega_1$ all of whose proper initial segements are in $V$.

Let $\varphi(i,q,\dot{d})$ denote the formula
\begin{align*}
 i < \omega_1 \wedge & (q,\dot{d}) \in \bL \ast \dot{\U}   \wedge \exists \seq{A_\alpha}{\alpha \in \osucc_q(\stem(q))} \su \\
&\forall \alpha \in \osucc_q(\stem(q)),(q \rest (\stem(q) {}^\frown \langle \alpha \rangle),\dot{d}) \Vdash ``\dot{F} \rest i \in A_\alpha\textup{''} \wedge \\
& \forall \alpha, \beta \in \osucc_q(\stem(q)), \alpha \ne \beta \then A_\alpha \cap A_\beta = \emptyset.
\end{align*}

\begin{claim}\label{basic-claim}  $\forall j<\omega_1, (p,\dot{c}) \in \bL$, there is some $i \in (j,\omega_1)$ and some $(q,\dot{d}) \le_0 (p,\dot{c})$ such that $\varphi(i,p,\dot{c})$ holds.\end{claim}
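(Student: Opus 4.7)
Suppose toward contradiction the claim fails: fix $j < \omega_1$ and $(p,\dot c) \in \bL \ast \dot\U$ such that $\varphi(i,q,\dot d)$ fails for every $i \in (j,\omega_1)$ and every $(q,\dot d) \le_0 (p,\dot c)$. My plan is to combine this uniform failure with the Prikry-style \autoref{cummings-magidor-prikry-cdot} and the stationary splitting structure of $\bL$, via a fusion construction, to build $(r,\dot e) \le (p,\dot c)$ that forces $\dot F$ to be captured by ground-model data --- contradicting the standing assumption that $\dot F$ is new.

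The first step is to reformulate the failure as \emph{collisions}. Setting
\[ P^i_\alpha(q,\dot d) := \{v \in V \such \exists (r,\dot e) \le (q \rest (\stem(q) \concat \la \alpha \ra), \dot d) \textup{ with } (r,\dot e) \Vdash \dot F \rest i = v\}, \]
one checks (by taking $A_\alpha = P^i_\alpha$) that $\varphi(i,q,\dot d)$ holds iff $\{P^i_\alpha(q,\dot d) : \alpha \in \osucc_q(\stem(q))\}$ is pairwise disjoint. So our failure hypothesis amounts to: below every $(q,\dot d) \le_0 (p,\dot c)$ and for every $i > j$, two distinct $\alpha_0, \alpha_1 \in \osucc_q(\stem(q))$ share a possible value of $\dot F \rest i$. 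In particular, no stationary thinning of $\osucc_p(\stem(p))$ succeeds in disentangling the $\dot F \rest i$ values across $\alpha$.

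Next, I would build a fusion sequence $(p_n,\dot c_n)_{n<\omega}$ with $(p_0,\dot c_0) = (p,\dot c)$ and $p_{n+1} \le_n p_n$, targeting an enumeration $(i_n)_n$ of a cofinal subset of $(j,\omega_1)$. At each stage $n$, for each frontier node $t$ at level $n(p_n) + n$ I apply \autoref{cummings-magidor-prikry-cdot} below $(p_n \rest t, \dot c_n)$ to decide $\dot F \rest i_n$ uniformly; then, invoking the failure of $\varphi$ together with the $\aleph_2$-completeness of the non-stationary ideal on $\aleph_{d(k)} \cap \cof(\omega_1)$ (the ideal governing $\bL$'s splitting), a Fodor-style pigeonhole thins each splitting set at the frontier to a stationary subset on which the decisions coincide. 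Gluing across $t$ and using countable closure of $\dot\U$ yields $(p_{n+1},\dot c_{n+1})$ below which $\dot F \rest i_n$ is forced to a single value $f_n \in V$. The fusion limit $(r,\dot e) := (\bigcap_n p_n,\textup{glued }\dot c_n)$ is a valid condition by the fusion fact following \autoref{laver-namba-def-rep}.

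The main obstacle is translating this countable-stage control into a genuine contradiction. The sequence $(f_n)_{n<\omega}$ lies in $V$ and $(r,\dot e) \Vdash \dot F \rest i_n = f_n$ for every $n$; but fusion handles only $\omega$-many $i_n$'s while $\dot F$ has domain $\omega_1$, so we directly obtain only $\dot F \rest \sup_n i_n \in V$ for some $\sup_n i_n < \omega_1$, which is already known. The contradiction must therefore come from a more global use of the failure --- either engineering the $i_n$ and the thinnings so that the range of $\bigcup_n f_n$ is forced to be cofinal in $\nu$ (contradicting $\cf^V(\nu) \ge \aleph_{\omega+1}^V$ together with the cofinality of $\dot F$ in $\nu$), or else iterating the fusion-with-Prikry construction to cover all $i < \omega_1$ and force $\dot F \in V$ outright. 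A related subtlety is that the failure hypothesis is formulated at the stem of $(p,\dot c)$, so applying it at deeper frontier nodes of the fusion requires a supplementary argument --- either an internal tree-induction or a direct derivation using \autoref{cummings-magidor-strong-decision-cdot} --- reducing the local failure statement back to the stem-level hypothesis.
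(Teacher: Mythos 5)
Your proposal does not close, and the place where it breaks is exactly the step you flag yourself. Two concrete problems. First, the ``Fodor-style pigeonhole'' that is supposed to thin each frontier splitting set to a stationary set on which the decisions of $\dot{F} \rest i_n$ \emph{coincide} is unjustified: the possible values of $\dot{F} \rest i_n$ are arbitrary elements of $V$ (functions into $\nu$), of which there may be up to $2^{\aleph_\omega}$-many, vastly exceeding the completeness of the nonstationary ideal on $\aleph_{d(k)} \cap \cof(\omega_1)$; and the failure of $\varphi$ only says that \emph{some two} successors share a possible value --- it gives no stationary set of coinciding decisions. So you cannot produce $(p_{n+1},\dot{c}_{n+1})$ forcing $\dot{F} \rest i_n$ to a single ground-model value, and even if you could, as you note, fusing over $\omega$-many $i_n$'s only re-proves that some countable initial segment lies in $V$, which is the standing hypothesis rather than a contradiction. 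The two escape routes you sketch (making $\bigcup_n f_n$ cofinal in $\nu$, or covering all of $\omega_1$) are not carried out, and the second is impossible in principle since $\bL \ast \dot{\U}$ genuinely adds $\dot{F}$. There is also the issue you mention of transferring the stem-level failure hypothesis to deeper nodes, which your sketch does not resolve.

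The missing idea is that \autoref{basic-claim} is a density statement that the paper proves \emph{directly}, with no contradiction hypothesis about $\varphi$ and no fusion: one recurses along $\alpha \in \osucc_p(\stem(p))$, maintaining sets $A_\beta$ of possible values for already-handled successors. At stage $\alpha$ the union $B$ of the earlier $A_\beta$'s has size $<\nu$; since $\dot{F}$ is forced cofinal in $\nu$ while each $\dot{F} \rest i$ is forced bounded in $\nu$ (\autoref{namba-succ-sing-pres}, or the chain condition together with $2^{\aleph_\omega}=\aleph_{\omega+1}$), the condition below $\stem(p) {}^\frown \la \alpha \ra$ forces that some $i \in (j,\omega_1)$ has $\dot{F} \rest i \notin B$. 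One then fixes that $i_\alpha$ by \autoref{cummings-magidor-strong-decision-cdot}, applies \autoref{cummings-magidor-prikry-cdot} to collect the set $A_\alpha$ of values decided at a fixed level, and disjointness from all earlier $A_\beta$ is automatic because every value in $A_\alpha$ avoids $B$. Finally, since the $i_\alpha$ range over only $\aleph_1$-many candidates and the splitting ideal is $\aleph_2$-complete, one stabilizes $i_\alpha = i$ on an $I$-positive set, takes the union of the corresponding $q_\alpha$'s, and glues the $\dot{d}_\alpha$'s; this witness satisfies $\varphi(i,q,\dot{d})$. The cofinality of $\dot{F}$ in $\nu$, which your argument never exploits at this stage, is precisely what drives the disjointness; fusion and the game enter only later in the proof of \autoref{mini-approx-thm-laver}.
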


\begin{proof} Let $W = \osucc_p(\stem(p))$. By induction on $\alpha \in W$ we will define a sequence of conditions, $\seq{(q_\alpha,\dot{d}_\alpha)}{\alpha \in W}$, a sequence of natural numbers $\seq{n_\alpha}{\alpha \in W}$, a sequence of countable ordinals $\seq{i_\alpha}{\alpha \in W}$, and the sets $\seq{A_\alpha}{\alpha \in W}$ of cardinality strictly less than $\nu$. After these objects are defined, we will finalize a choice of $(q,\dot{d})$ and the $A_\alpha$'s.

If $\alpha = \min W$, then we can choose an arbitrary $i_\alpha \in (j,\omega_1)$. We apply \autoref{cummings-magidor-prikry-cdot} to find some $(q_\alpha,\dot{d}_\alpha) \le_0 (p \rest (\stem p {}^\frown \langle \alpha \rangle),\dot{c})$ and some $n_\alpha \in \omega$ such that for all $s \in q_\alpha$ with $|s|=n_\alpha$, $(q_\alpha,\dot{d}_\alpha)$ decides $\dot{F} \rest i_\alpha$. Then we let $A_\alpha = \{a:\exists t \in q_\alpha) \su |t|=n_\alpha \wedge (q_\alpha \rest t,\dot{d}_\alpha) \Vdash \textup{``}\dot{F}\rest i_\alpha = a \textup{''}\}$. (Establishing this case is just a formality.)

Now suppose that the members of our sequences have been defined for $\beta \in W \cap \alpha$. Let $B = \bigcup_{\beta \in \alpha \cap W}A_\beta$, which is in particular of cardinality strictly less than $\nu$. Then observe that
\[
(p \rest t \concat \la \alpha \ra,\dot{c})  \Vdash \textup{``}\{\dot{F} \rest i : i \in (j,\omega_1)\}  \not \subseteq B \textup{''}
 \]
since otherwise there would be some $(q,\dot{d}) \le (p \rest t {}^\frown \la \alpha \ra,\dot{c})$ such that $(q,\dot{d}) \Vdash \textup{``}\{\dot{F} \rest i : i \in (j,\omega_1)\} \subseteq B \textup{''}$. We know that $\dot{F} \rest i$ is forced to be bounded in $\nu$: If $\nu = \aleph_{\omega+1}^V$ then this is by \autoref{namba-succ-sing-pres}, otherwise it follows using the chain condition and our assumption that $2^{\aleph_\omega}=\aleph_{\omega+1}$. It is therefore implied that $(q,\dot{d})$ forces $\dot{F}$ to be bounded in $\nu$, and this is a contradiction of our assumptions.

This means that
\[
(p \rest t \concat \la \alpha \ra,\dot{c}) \Vdash \textup{``}\exists i \in (j,\omega_1),\dot{F} \rest i  \notin B \textup{''}.
 \]
So we let $\dot{k}$ be the $\P$-name for the ordinal in $(j,\omega_1)$ witnessing this expression. By \autoref{cummings-magidor-strong-decision-cdot}, there is some $(q'_\alpha ,\dot{d}_\alpha') \le_0 (p \rest t {}^\frown \la \alpha \ra,\dot{c})$ and some $i_\alpha$ such that $(q'_\alpha,\dot{d}_\alpha') \Vdash \textup{``}\dot{k} = i_\alpha \textup{''}$. Apply \autoref{cummings-magidor-prikry-cdot} to find some $n_\alpha \in \omega$ and some $(q_\alpha,\dot{d}_\alpha) \le_0 (q'_\alpha,\dot{d}_\alpha')$ such that for any $t \in q_\alpha$ of height $n_\alpha$, $(q_\alpha \rest t,\dot{d}_\alpha)$ decides $\dot{F} \rest i_\alpha$. Then (as in the base case) we let $A_\alpha = \{a:\exists t \in (q_\alpha)_{n_\alpha},(q_\alpha \rest t,\dot{d}_\alpha) \Vdash \textup{``}\dot{F} \rest i_\alpha = a\textup{''}\}$. Observe that $A_\alpha \cap A_\beta = \emptyset$ for all $\beta<\alpha$.

Let $I$ be ideal in the sequence of $\mathcal{I}$ such that $\bL$ is defined to have $I$-positive splitting for $\osucc_p(t)$. Finally, we choose some $W' \subseteq W$ that is $I$-positive and such that there is some $i$ such that for all $\alpha \in W'$, $i_\alpha = i$. Then let $q = \bigcup_{\alpha \in W'}q_\alpha$. Let $\dot{d}$ be the gluing of the $\dot{d}_\alpha$'s below $q_\alpha$.\end{proof}

We plan to build a fusion sequence using \autoref{basic-claim}. For this purpose we define a game $\mathcal{G}_k$ for $k<\omega_1$.

Suppose round $n$ of the game is being played where $n=0$ is the first round. If $n=0$ then let $(q_*,\dot{d}_*)$ be the starting condition $(p_{-1},\dot{d}_{-1})$ where $|\stem(p_{-1})|=m$ and let $i_* = 0$. Otherwise if $n>0$ let $(q_*,\dot{d}_*,i_*)$ be $(q_{n-1},i_{n-1})$. First Player $\pI$ chooses a subset $Z_n \subseteq \osucc_{q_*}(\stem(q_*))$ with $Z_n \in I_{m+n}$ and some $\delta_n<k$. Then Player $\pII$ chooses some $\alpha \in \osucc_{q_*}(\stem(q_*)) \setminus Z_n$ and some condition $(q_n,\dot{d}_n)\le_0 (q_* \rest \stem q_* {}^\frown \langle \alpha \rangle ,\dot{d}_*)$ and some $i_n \in (\delta_n,k)$ such that $\varphi(q_n,\dot{d}_n,i_n)$ holds. Hence we have the following diagram:

\begin{center}
\def\arraystretch{1.25}
\begin{tabular}{c | c | c | c | c | c | c | c} 
  \textup{Player \textsf{I}} & $Z_0,\delta_0$ &  & $Z_1,\delta_1$ &  & $Z_2,\delta_2$ & & \ldots \\ 
 \hline
  \textup{Player \textsf{II}} &  & $q_0,\dot{d}_0,i_0$ &  & $q_1,\dot{d}_1,i_1$ &  & $q_2,\dot{d}_2,i_2$ & \ldots
\end{tabular}
\end{center}

Player $\textsf{\textup{II}}$ loses at stage $n$ if they cannot an appropriate pair $(q_n,\dot{d}_n,i_n)$ witnessing $\varphi(i_n,q_n,\dot{d}_n)$ for some $i_n<k$, i.e$.$ if they cannot in particular find such $i_n \in (\delta_n,k)$. Otherwise, if Player $\pII$ does not lose at any finite stage, then Player $\pII$ wins.

The following can be proved using standard arguments for Namba-style games (see \cite{Namba1971},\cite[Claim 10]{Levine2023b},\cite[Fact 5]{Cummings-Magidor2011}, \cite[Proposition 3.4]{Krueger2013}).
	
\begin{claim}\label{game-claim} For some $k<\omega_1$, Player $\textup{\textsf{II}}$ has a winning strategy in $\mathcal{G}_k$.\end{claim}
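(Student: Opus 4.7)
The plan is to use the fact that $\mathcal{G}_k$ is a closed game for Player II---she wins precisely when play continues through all finite rounds---so $\mathcal{G}_k$ is determined by Gale--Stewart. It thus suffices to show that for some $k<\omega_1$, Player I has no winning strategy. I argue this by contradiction: assume that for every $k<\omega_1$, Player I has a winning strategy $\sigma_k$, and derive a contradiction by reflecting to a suitably chosen elementary submodel.

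Fix a large regular $\theta$ and take $N \prec H(\theta)$ of cardinality $\aleph_1$ with $\omega_1 \subseteq N$, containing $\bL \ast \dot{\U}$, $\dot{F}$, $\nu$, $\seq{I_n}{n<\omega}$, $(p_{-1}, \dot{d}_{-1})$, and the assignment $k \mapsto \sigma_k$. Realize $N$ as the union of a continuous $\prec$-increasing chain $\seq{N_\xi}{\xi<\omega_1}$ of countable models with $\seq{N_\eta}{\eta<\xi} \in N_{\xi+1}$, and fix $\xi_*$ from the club $\{\xi<\omega_1 : N_\xi \cap \omega_1 = \xi\}$. Set $k_* = \xi_*$, so $\sigma_{k_*} \in N$. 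The plan is to simulate a play of $\mathcal{G}_{k_*}$ against $\sigma_{k_*}$ that survives all $\omega$ rounds, contradicting that $\sigma_{k_*}$ is winning.

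The inductive strategy maintains that Player II's current position $(q_*, \dot{d}_*)$ lies in $N_{\xi_*}$. In each round, Player I's move $(Z_n, \delta_n)$ delivered by $\sigma_{k_*}$ satisfies $\delta_n < k_* = N_{\xi_*} \cap \omega_1$, so $\delta_n \in N_{\xi_*}$. The basic claim, applied inside $N_{\xi_*}$ by elementarity, then produces $(q_n, \dot{d}_n, i_n) \in N_{\xi_*}$ with $(q_n, \dot{d}_n) \le_0 (q_*, \dot{d}_*)$, $\varphi(i_n, q_n, \dot{d}_n)$, and $i_n \in (\delta_n, \omega_1) \cap N_{\xi_*} = (\delta_n, k_*)$---precisely the bound required by $\mathcal{G}_{k_*}$. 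Player II then selects some $\alpha_n \in \osucc_{q_n}(\stem(q_n)) \setminus Z_n$ and plays $(q_n \rest (\stem(q_n) \concat \la \alpha_n \ra), \dot{d}_n, i_n)$.

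The main obstacle is the selection of $\alpha_n$. Since $k_* \notin N_{\xi_*}$, we have $\sigma_{k_*} \notin N_{\xi_*}$, so $Z_n$ need not lie in $N_{\xi_*}$ and the countable set $\osucc_{q_n}(\stem(q_n)) \cap N_{\xi_*}$ could in principle be entirely absorbed by $Z_n$. The standard resolution, as in \cite[Fact 5]{Cummings-Magidor2011} and \cite[Proposition 3.4]{Krueger2013}, exploits that $Z_n$ is nonstationary in $\aleph_{d(|\stem(q_n)|)} \cap \cof(\omega_1)$: using the internally approachable chain $\seq{N_\xi}{\xi<\omega_1}$ together with normality of the nonstationary ideal, one locates a cofinality-$\omega_1$ point in $\osucc_{q_n}(\stem(q_n)) \setminus Z_n$ that lies in a later $N_\xi$ of the chain and can be absorbed as $\alpha_n$ without enlarging $N_{\xi_*} \cap \omega_1$ beyond $k_*$, allowing the inductive invariant to be restored for the next round. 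Iterating through $\omega$ rounds yields the desired surviving play and completes the contradiction.
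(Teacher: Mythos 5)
Your overall skeleton matches the intended argument: Gale--Stewart determinacy of the closed game, assuming Player I has winning strategies $\sigma_k$ for all $k<\omega_1$, reflecting to an elementary submodel whose trace on $\omega_1$ is the chosen $k$, keeping Player II's moves inside the model so that $\delta_n$ is automatically in the model and \autoref{basic-claim} applied by elementarity yields $i_n<k$. But at the step you yourself identify as the main obstacle, your proposed resolution does not work, and it is exactly here that the paper's "crux" lies. If you pick $\alpha_n \in \osucc_{q_n}(\stem(q_n))\setminus Z_n$ from a \emph{later} model $N_\xi$ of the chain, then the new position is no longer an element of $N_{\xi_*}$, so in the next round you can no longer apply \autoref{basic-claim} inside $N_{\xi_*}$; applying it inside $N_\xi$ only bounds $i_{n+1}$ below $N_\xi\cap\omega_1$, which may exceed $k_*=\xi_*$, producing an illegal move in $\mathcal{G}_{k_*}$. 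The phrase ``without enlarging $N_{\xi_*}\cap\omega_1$ beyond $k_*$'' does not repair this: $N_{\xi_*}$ is fixed, and it is the trace of whichever model you use for the elementarity argument that matters. Moreover, nothing stops $\sigma_{k_*}$'s next set $Z_{n+1}$ from again covering the countable trace of the later model, so the ``chase into later models'' has no terminating mechanism, and normality of the nonstationary ideal plays no role at this point.

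The correct mechanism, and the one the cited arguments actually use, is a completeness (not normality) argument that lets Player II stay inside a single countable model $M$ with $k=M\cap\omega_1$ and $\seq{\sigma_j}{j<\omega_1}\in M$: at each round the current position is determined inside $M$ by Player II's earlier moves, so the function sending $j<\omega_1$ to the ideal set that $\sigma_j$ would dictate at that position lies in $M$, and the union of these $\aleph_1$-many sets is still in the relevant ideal, since the splitting ideal is (the restriction of) the nonstationary ideal on some $\aleph_{d(m+n)}\cap\cof(\omega_1)$ with $d(m+n)\ge 2$ and is therefore $\aleph_2$-complete. This union is an element of $M$ and is nonstationary, while $\osucc_{q_n}(\stem(q_n))$ is stationary, so by elementarity Player II can choose $\alpha_n\in M$ avoiding the union, and in particular avoiding the actual move $Z_n$ of $\sigma_{k}$, even though $\sigma_k\notin M$. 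With that substitution your induction goes through; as written, the proof has a genuine gap at its central step. (Your cardinality-$\aleph_1$ model with an internally approachable chain is also an unnecessary complication --- a single countable $M$ suffices --- but that is cosmetic.)
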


\begin{proof}[Sketch of Proof]The essential idea is the following: By the Gale-Stewart Theorem, the failure of the claim implies that for all $i<\omega_1$ is a winning strategy $\sigma_i$ for Player $\textup{\textsf{I}}$ in $\mathcal{G}_i$. Then take an elementary submodel $M \prec H(\theta)$ with $\seq{\sigma_i}{i<\omega_1} \in M$. Then it is possible to construct a run of the game $\mathcal{G}_k$ such that Player $\pI$ uses the strategy $\sigma_k$ but nonetheless loses the game, and this is done by ensuring that Player $\pII$'s moves are all in $M$ even though $\sigma_k \notin M$. The crux of the argument is that it is possible to take the union of $\aleph_1$-many sets in the relevant ideal to obtain a set in that ideal.\end{proof}

Now we will build a condition $q \in \bL$ by a fusion process in such a way that any stronger condition deciding $\dot{F} \rest k$ will also code the generic sequence for $\bL$.

Fix a sequence $\seq{\delta_n}{n<\omega}$ converging to $k$. Let $p_0 = \bar{p}$ be the starting point where $|\stem(\bar{p})|=m$. We will define a fusion sequence $\seq{p_n}{n < \omega}$ and a sequence $\seq{\dot{c}_n}{n<\omega}$ by induction on $n<\omega$ in such a way that $p_{n+1} \Vdash \textup{``}\dot{c}_{n+1} \le \dot{c}_n \textup{''}$ and such that:

\begin{quote} For all $n<\omega$, then for all $t \in p_n$ with $|t|=m+n$, the following is the case: Let $s_0 \sqsubseteq s_1 \sqsubseteq \ldots \sqsubseteq s_n=t$ be the sequence of all nodes up to and including $t$. Then there is a sequence $Z^t_0,\ldots,Z^t_n$ such that
\[
(Z^t_0,\delta_0),(p_0 \rest s_0,\dot{c}_0,i_0),\ldots,(Z^t_n,\delta_n),(p_n \rest s_n,\dot{c}_n,i_n)
\]
is a run of the game $\mathcal{G}_k$ in which Player $\pII$'s moves are determined by the winning strategy obtained in \autoref{game-claim}.\end{quote}

Note that the third point implies the following: For all positive $n<\omega$, for all $t \in p_n$ with $|t|=m+n$, there is $i_t \in (\delta_n,k)$ and a sequence $\seq{A_s}{s \in \ssucc_{p_n}(t)}$ witnessing that $\varphi(i_t,p_n \rest t,\dot{c}_n)$ holds.

We construct the fusion sequence as follows: Start with stage $-1$ for convenience and let $p_{-1}=p$. Now assume we have defined $p_{n-1}$, and we are considering $t \in p_{n-1}$ with $|t|=m+n$. Let $s_0 \sqsubseteq s_1 \sqsubseteq \ldots \sqsubseteq s_{n-1} = t$ be the sequence of splitting nodes up to and including $t$. Let $S_t$ be the set of $\alpha \in \osucc_{p_{n-1}}(t)$ such that for some $Z^\alpha_n$, the winning strategy for Player $\pII$ applied to the sequence
\[
(Z^t_0,\delta_0),(p_0 \rest s_0,\dot{c}_0,i_0),\ldots,(Z^t_{n-1},\delta_{n-1}),(p_{n-1} \rest s_{n-1},,\dot{c}_{n-1},i_{n-1}),(Z^\alpha_n,\delta_n)
\]
produces some $(q_n,,\dot{d}_n,i_n)$ where $(q_n,\dot{d}_n) \le_0 (p_{n-1} \rest t {}^\frown \langle \alpha \rangle,\dot{c}_{n-1})$. We claim that $|S_t| \notin I_{m+n}$. Otherwise Player $\pI$ would have a winning move for the sequence
\[
(Z^t_0,\delta_0),(p_0 \rest s_0,\dot{c}_0,i_0),\ldots,(Z^t_{n-1},\delta_{n-1}),(p_{n-1}\rest
s_{n-1},\dot{c}_{n-1},i_{n-1})
\]
by playing $S_t$ as the $I_{m+n}$-component of their move. For each such $t $ and $\alpha \in S_t$, choose $q_{t,\alpha}$ to be produced by the winning strategy for Player $\pII$ as the $\bL$-component of their move. Now let $p_n = \bigcup \{q_{t,\alpha}:|t|=m+n,\alpha \in S_t\}$.

Now let $q$ be the fusion limit of $\seq{p_n}{n<\omega}$ and let $\dot{d}$ be the $\bL$-name for the lower bound of $\seq{\dot{c}_n}{n < \omega}$. Then $(q,\dot{d})$ forces that the generic sequence for $\P$ can be recovered from $\dot{F} \rest k$ as follows: Let $(r,\dot{e}) \le (q,\dot{d})$ force $\dot{F} \rest  k = g \in V$. We can inductively choose a cofinal branch $b \subset r$ such for all $t \in b$, for some $i_t<k$, $g \rest i_t = a_t$. Specifically, we construct $b$ by defining a sequence $\seq{s_n}{n<\omega}$ of splitting nodes as follows: Let $s_0 = \stem r$. Given $s_n$, let $s^*_{n+1} \sqsupseteq s_n$ be a direct successor of $s_n$ in $r$ . Then since $\varphi(i_t,r \rest s^*_{n+1},\dot{e})$ holds for some $i_t \in (\delta_n,k)$, there is some $\alpha \in \osucc_r(s^*_{n+1})$ such that $(r \rest s^*_{n+1} {}^\frown \langle \alpha \rangle,\dot{e}) \Vdash \textup{``}g \rest i_t \in  A_t \textup{''}$. Then let $s_{n+1} = s_{n+1}^* {}^\frown \langle \alpha \rangle$. Then let $b = \{t \in r: \exists n<\omega, t \sqsubseteq s_n\}$. This implies that $(r,\dot{e})$ forces that the generic object is equal to $b$, i.e$.$ that $\bigcap \Gamma(\P)=b \in V$, but this is not possible.

Hence $(q,\dot{d}) \Vdash ``\dot{F} \rest k \notin V\textup{''}$ lest we obtain the contradiction from the previous paragraph. This contradicts the premise from the beginning of the proof that initial segments of $\dot{F}$ are in $V$.\end{proof}

\subsection{Some Exactness of Upper Bounds}\label{section-eubs}

Because the forcing we use is meant to provide a master condition for the forcings adding the $\square_{\aleph_n}$'s (which is not needed in Cummings-Magidor \cite{Cummings-Magidor2011}), we must make some adjustments to their arguments.

We want to examine the interaction of this repeating version with scales from the ground model.

\begin{definition}\label{generic_min_function}
We isolate two particular names.
\begin{enumerate}
\item
Let $\dot{b}_{\textup{\textsf{full}}}$ be a $\bL$-name for the \emph{generic branch}, meaning if $G$ is $\bL$-generic, then $\dot{b}_{\textup{\textsf{full}}}$ evaluates to
\[
b_{\textup{\textsf{full}}} = \bigcup\{ \stem(p) \such p \in G \}.
\]
\item
Recall the function $d$ from the definition of $\bL$. We write $d_{\rm min}^{-1}(m) $ for the
minimal $n$ with $d(n) = m$.

\item Let $\dot{b}_{\textup{\textsf{prod}}}$ be a $\bL$-name evaluating to the function
\[
b_{\textup{\textsf{prod}}} = \{ \la (m,\stem(p)(d^{-1}_{\rm min}(m))), p \ra \such p \in G, m \in \omega, \dom(p) > d^{-1}_{\rm min}(m)\}.
\]
\end{enumerate}\end{definition}

\begin{lemma}\label{cummings-magidor-repetitions} Let $p \in \bL$ with $|\stem(p)|=d^{-1}_\textup{min}(n)$ and suppose $\dot{\gamma}$ is a name for an ordinal that is forced by $p$ to be below $\dot{b}_\mathsf{prod}(n)$. Then there is some $q \le_0 p$ and some $\delta<\aleph_n$ such that $q \Vdash \textup{``}\dot{\gamma} \le \delta \textup{''}$.\footnote{In our discussions with Hannes Jakob, we realized that the previous version's proof should not apply to forcings with a higher splitting cofinality; he found a counterexample for the other cases and suggested that the proof should work for cofinality $\omega_1$ splitting.}\end{lemma}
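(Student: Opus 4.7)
The plan is to combine a leaves-to-stem pruning of $p$ using normality of the nonstationary ideal on each $\aleph_{d(k)}$ with a final application of Fodor's lemma at the stem level. Set $S := \osucc_p(\stem(p))$, which is stationary in $\aleph_n \cap \cof(\omega_1)$, and for each $\eta \in S$ let $p_\eta := p \rest (\stem(p) {}^\frown \la \eta \ra)$, so $p_\eta \Vdash \dot\gamma < \eta$ by hypothesis. The central sub-claim will be that for every $\eta \in S$ there exist $q_\eta \le_0 p_\eta$ and $\delta_\eta < \eta$ with $q_\eta \Vdash \dot\gamma \le \delta_\eta$. Granting it, the map $\eta \mapsto \delta_\eta$ is regressive on the stationary set $S \s \aleph_n$, and Fodor's lemma yields a stationary $S' \s S$ and $\delta^* < \aleph_n$ with $\delta_\eta = \delta^*$ for every $\eta \in S'$. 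I then set $q := \bigcup_{\eta \in S'} q_\eta$, which is a condition in $\bL$ with $\stem(q) = \stem(p)$ and $\osucc_q(\stem(p)) = S'$ stationary; hence $q \le_0 p$ and $q \Vdash \dot\gamma \le \delta^*$.

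To prove the sub-claim, I fix $\eta \in S$ and apply \autoref{cummings-magidor-prikry-cdot} (with trivial $\dot\U$) to $p_\eta$ and $\dot\gamma$, obtaining $q_\eta^0 \le_0 p_\eta$ and $h_0 < \omega$ such that every $t \in q_\eta^0$ of length $h_0$ decides $\dot\gamma$ to some $\gamma_t < \eta$. I also fix a cofinal $\omega_1$-sequence $\la \xi_i : i < \omega_1 \ra$ in $\eta$, available since $\cof(\eta) = \omega_1$. Then I recursively assign, from level $h_0$ down to level $|\stem(p_\eta)|$, a value $v(t) < \eta$ at each node $t$ of $q_\eta^0$ and (for $|t| < h_0$) a stationary subset $\osucc'(t) \s \osucc_{q_\eta^0}(t)$: I set $v(t) := \gamma_t$ when $|t| = h_0$; for $|t| = k < h_0$ and $\alpha \in \osucc_{q_\eta^0}(t)$ I let $i(\alpha)$ be the least $i < \omega_1$ with $v(t {}^\frown \la \alpha \ra) < \xi_i$, and partition $\osucc_{q_\eta^0}(t) = \bigsqcup_{i < \omega_1} P_i$ by $P_i := \{\alpha : i(\alpha) = i\}$. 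Since the nonstationary ideal on $\aleph_{d(k)}$ is $\aleph_{d(k)}$-complete and $\aleph_{d(k)} > \omega_1$ (because $d(k) \ge 2$), some $P_{i^*}$ is stationary; I set $\osucc'(t) := P_{i^*}$ and $v(t) := \xi_{i^*}$. The tree $q_\eta$ obtained by pruning $\osucc_{q_\eta^0}(t)$ to $\osucc'(t)$ at each level below $h_0$ (and leaving $q_\eta^0$ unchanged above) is a valid condition with the same stem as $p_\eta$, and by construction $v(s) < v(r)$ whenever $s$ is an immediate successor of $r$ in $q_\eta$ with $|r| < h_0$; telescoping, for every $t \in q_\eta$ of length $h_0$ we get $\gamma_t = v(t) < v(\stem(p_\eta)) =: \delta_\eta < \eta$, so $q_\eta \le_0 p_\eta$ and $q_\eta \Vdash \dot\gamma \le \delta_\eta$.

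The hard part is the per-level shrinking inside the sub-claim, and specifically its reliance on the fact that $\bL$'s splitting ideals concentrate on cofinality $\omega_1$: I need a cofinal sequence in $\eta$ of length strictly less than each $\aleph_{d(k)}$, so that the induced partition of the stationary set $\osucc_{q_\eta^0}(t)$ has fewer than $\aleph_{d(k)}$ pieces and admits a stationary one by completeness of the nonstationary ideal. The footnote's caveat is in line with this: if $p$'s splitting concentrated on a higher cofinality, the partition could become too long for some $\aleph_{d(k)}$ and the argument would break.
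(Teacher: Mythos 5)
Your proof is correct, and its core differs from the paper's in a way worth noting. The outer skeleton is identical: reduce to finding, for each $\eta \in \osucc_p(\stem(p))$, some $q_\eta \le_0 p \rest (\stem(p) {}^\frown \la \eta \ra)$ and $\delta_\eta < \eta$ forcing $\dot\gamma \le \delta_\eta$, then press down with Fodor on the stationary splitting set and glue --- that is exactly the paper's first paragraph. For the per-successor bound, however, the paper does not invoke \autoref{cummings-magidor-prikry-cdot} at all: it runs a self-contained contradiction argument, defining a node $t$ to be ``bad'' if no $\le_0$-extension of $p \rest t$ bounds $\dot\gamma$, showing badness propagates to a stationary set of successors (using the fixed $\omega_1$-cofinal sequence in $\eta$ and the completeness of the nonstationary ideal on $\aleph_{d(|t|)} \cap \cof(\omega_1)$, just as in your partition step), building an $\omega$-length fusion all of whose nodes are bad, and then deriving a contradiction from any extension that bounds $\dot\gamma$. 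You instead use \autoref{cummings-magidor-prikry-cdot} (with trivial $\dot\U$) as a black box to get a single level $h_0$ at which $\dot\gamma$ is decided everywhere --- necessarily to values below $\eta$, since $p \rest (\stem(p) {}^\frown \la \eta \ra)$ forces $\dot b_{\mathsf{prod}}(n) = \eta$ --- and then do a finite backward stabilization from level $h_0$ to the stem, at each node partitioning the stationary successor set into $\omega_1$ pieces via the cofinal sequence and keeping a stationary piece. This is sound: the naive alternative of taking the supremum of the level-$h_0$ values would fail (there can be $\aleph_{d(k)} > \omega_1 = \cf(\eta)$ many nodes), and your level-by-level pruning is precisely what avoids it; the degenerate case $h_0 \le |\stem(p_\eta)|$ is trivial since then $q^0_\eta$ itself decides $\dot\gamma$. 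Both arguments rest on the same two structural features --- cofinality-$\omega_1$ splitting (so bounds are coded by countably many indices) and $\aleph_{d(k)}$-completeness of the nonstationary ideal --- so your closing remark about the footnote is on target. What your route buys is modularity and finiteness (a finite recursion reusing an already-proved lemma, no fusion and no argument by contradiction); what the paper's route buys is independence from \autoref{cummings-magidor-prikry-cdot}, at the cost of essentially re-running that lemma's badness/fusion machinery while tracking the bound $\delta < \eta$.
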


\begin{proof} Let $\alpha \in \osucc_p(\stem p):=A$. It is sufficient to find some $q^A_\alpha \le_0 p \rest \stem(p) {}^\frown \langle \alpha \rangle$ and some $\delta_\alpha<\alpha$ for each $\alpha \in A$ such that $q^A_\alpha \Vdash \dot{\gamma} \le \delta_\alpha<\alpha$. Since $\alpha \mapsto \delta_\alpha$ is regressive, there is therefore a stationary subset $S \subseteq \osucc_p(\stem p)$ and some $\delta_*$ such that $\delta_\alpha = \delta_*$ for all $\alpha \in S$. Then we can let $q = \bigcup_{\alpha \in S}q^A_\alpha$ and we see that $q$ forces $\dot{\gamma} \le \delta_*$.

For the remainder of the proof, fix $\beta \in A$ and choose a cofinal sequence $\seq{\delta_i}{i<\omega_1}$ in $\beta$ (using the fact that the splitting sets concentrate on cofinality $\omega_1$). Note that $p \rest \stem p {}^\frown \langle \beta \rangle$ forces $\beta = \dot{b}_\mathsf{prod}(d^{-1}_\textup{min}(n))$. For each splitting node $t \in p \rest \stem(p) {}^\frown \langle \beta \rangle$, we say that $t$ is \emph{bad} if our sufficient statement also fails with respect to $p \rest t$, i.e$.$ if there is no $q \le_0 p \rest t$ such that for some $\delta< \aleph_n$, $q \Vdash \textup{``}\dot{\gamma} \le \delta\textup{''}$.

Now we will build an extension $r \le_0 p$ such that for all $t \in r$, $t$ is bad. This will give a contradiction because if $r' \le r$ is any condition deciding a bound for $\dot{\gamma}$ and $t = \stem(r')$, then $r' \le_0 \rest p \rest t$, contradicting badness of $t$.

We will build $r \le_0 p$ using a fusion sequence $\seq{p_k}{k<\omega}$ where $p_0 = p$ and where all $t \in p_k$ with $|t| \le |\stem(p)|+k$ are bad. This is fulfilled by $p_0$ by assumption. Suppose then that we have $p_k$. Let $t \in p_k$ be such that $|t|=|\stem (p)|+k$. 

\begin{claim} If $t$ is bad, then $S_t:= \{\alpha \in \osucc_t(p_n): t {}^\frown \la \alpha \ra \textup{ is bad}\}$ is a stationary subset of $\aleph_{d(|t|)} \cap \cof(\omega_1)$.\end{claim}

\begin{proof} Suppose otherwise. Then there is a stationary subset $S$ of $\alpha \in \osucc_t(p_n)$ such that $t {}^\frown \langle \alpha \rangle$ is not bad. Therefore, for each $\alpha \in S$, there is a $q_\alpha \le_0 p \rest t$ and some $\epsilon_\alpha<\beta$ such that $q_\alpha \Vdash \dot{\gamma} \le \epsilon_\alpha$. In fact, for each $\alpha$ we can rather choose $i_\alpha<\omega_1$ such that $q_\alpha \Vdash \dot{\gamma} \le \delta_{i_\alpha}$ where we are referring to the sequence of $\delta$'s fixed above. By completeness of the nonstationary ideal for $\aleph_n \cap \cof(\omega_1)$ and the fact that $n > 1$, there is a stationary subset $S' \subseteq S$ and some $i_*$ such that $i_\alpha = i_*$ for all $\alpha \in S'$. Then let $q = \bigcup_{\alpha \in S'}q_\alpha$. Then $q$ forces $\dot{\gamma} \le \delta_{i_*} < \beta$. This contradicts the assumption that $t$ is bad.\end{proof}

Since $S_t$ is a stationary subset of $\aleph_{d(|t|)}\cap \cof(\omega_1)$ for all such $t$, we let $p_{n+1} \cup \{p \rest (t {}^\frown \la \alpha \ra):\alpha \in S_t\}$. Having defined $p_n$ for $n<\omega$, we let $r = \bigcap_{n<\omega}p_n$. Then $r$ is the condition described above such that $t$ is bad for all $t \in r$, hence we have finished the proof.
\end{proof}

Note that the proof of \autoref{cummings-magidor-repetitions} uses the fact that $\bL$ is defined to split into stationary sets by invoking Fodor's Lemma. The same is true of the next lemma.

Now we are able to prove a bounding lemma analogous to one obtained by Cummings and Magidor \cite[Fact 4]{Cummings-Magidor2011}. This lemma embodies the reason that we are using Laver-style Namba forcings in this paper.

\begin{lemma}\label{bounding} Let $\vec{f}=\seq{f_\alpha}{\alpha<\aleph_{\omega+1}}$ be a scale on $\prod_{n<\omega}\aleph_n$. Then $\bL$ forces that $\dot{b}_{\textup{\textsf{prod}}}$ is an exact upper bound of $\vec{f}$.\end{lemma}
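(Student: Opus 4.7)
The plan is to verify the two defining properties of an exact upper bound for $\dot{b}_{\textup{\textsf{prod}}}$ in turn: that every $f_\alpha$ is $<^*$-below $\dot{b}_{\textup{\textsf{prod}}}$, and that every function forced to be $<^*$-below $\dot{b}_{\textup{\textsf{prod}}}$ is eventually dominated by some $f_\alpha$.

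For the upper bound direction, given $p \in \bL$ and $\alpha < \aleph_{\omega+1}$, I would construct $q \le p$ by thinning: at each $t \in p$ with $|t| = n \ge |\stem(p)|$, replace $\osucc_p(t)$ by its intersection with $(f_\alpha(d(n)), \aleph_{d(n)})$.  This intersection remains a stationary subset of $\aleph_{d(n)} \cap \cof(\omega_1)$ because $f_\alpha(d(n)) < \aleph_{d(n)}$, so $q \in \bL$.  By construction $q$ forces $\dot{b}_{\textup{\textsf{prod}}}(m) > f_\alpha(m)$ for every $m$ with $d^{-1}_\textup{min}(m) \ge |\stem(p)|$, which is cofinitely many $m$.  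A density argument gives $\bL \Vdash f_\alpha <^* \dot{b}_{\textup{\textsf{prod}}}$.

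For the exactness direction, suppose $p \Vdash \dot{g} <^* \dot{b}_{\textup{\textsf{prod}}}$.  Strengthening $p$ if necessary, fix $m_0$ with $p \Vdash \dot{g}(m) < \dot{b}_{\textup{\textsf{prod}}}(m)$ for all $m \ge m_0$.  The heart of the argument is a fusion construction $\seq{p_k}{k<\omega}$ with $p_0 = p$ that yields ground-model bounds $\delta_{m_0+k} < \aleph_{m_0+k}$ with $p_k \Vdash \dot{g}(m_0+k) \le \delta_{m_0+k}$.  At stage $k$, let $n_k := d^{-1}_\textup{min}(m_0+k)$.  For each $t \in p_{k-1}$ at level $n_k$, the condition $p_{k-1} \rest t$ has stem $t$ of the length required by \autoref{cummings-magidor-repetitions}, so applying that lemma to $\dot{g}(m_0+k)$ yields some $q_t \le_0 p_{k-1} \rest t$ and some $\delta_t < \aleph_{m_0+k}$ with $q_t \Vdash \dot{g}(m_0+k) \le \delta_t$.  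Set $p_k := \bigcup_t q_t$; this remains in $\bL$ with the same stem, agrees with $p_{k-1}$ on levels $\le n_k$, and hence fits into the fusion ordering.  Under the cardinal arithmetic we arrange in the construction (it suffices that $\aleph_{m_0+k-1}^{\aleph_0} < \aleph_{m_0+k}$, which holds for large $k$ regardless of $\CH$), the number of $t$'s at level $n_k$ is less than the regular cardinal $\aleph_{m_0+k}$, so $\delta_{m_0+k} := \sup_t \delta_t < \aleph_{m_0+k}$.  Taking $q = \bigcap_k p_k$ the fusion limit, and letting $h \in V$ with $h(m_0+k) = \delta_{m_0+k}$ (arbitrary elsewhere), we get $h \in \prod_{m<\omega}\aleph_m$ and $q \Vdash \dot{g} \le^* h$.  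Choose $\alpha < \aleph_{\omega+1}$ with $h <^* f_\alpha$ using the scale property; then $q \Vdash \dot{g} <^* f_\alpha$, and density finishes.

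The main obstacle is assembling the local applications of \autoref{cummings-magidor-repetitions} across all nodes at level $n_k$ into a single condition $p_k$ with a uniform bound $\delta_{m_0+k}$.  The Laver-style structure of $\bL$ makes the assembly $\bigcup_t q_t$ a straightforward union of trees sharing the original stem, but the cardinal arithmetic controlling the branching width of $p_{k-1}$ at level $n_k$ is essential---without it, $\sup_t \delta_t$ could reach $\aleph_{m_0+k}$ and the scheme would fail.  Stationarity of splitting is also used implicitly, as it is what makes \autoref{cummings-magidor-repetitions} available in the first place via the Fodor step inside its proof.
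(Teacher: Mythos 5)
Your overall scheme is the same as the paper's: the upper-bound half is done by a direct extension that thins each splitting set above the corresponding value of $f_\alpha$, and the exactness half by a fusion in which \autoref{cummings-magidor-repetitions} is applied at every node of level $n_k=d^{-1}_{\textup{min}}(m_0+k)$, the resulting local bounds are glued and supped into a ground-model function $h$, and the scale property then supplies an $f_\alpha$ with $h<^*f_\alpha$. That is exactly how the paper argues.

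The one genuine flaw is your justification of the counting step. You assert that $|\{t\in p_{k-1}:|t|=n_k\}|<\aleph_{m_0+k}$ requires cardinal arithmetic, ``it suffices that $\aleph_{m_0+k-1}^{\aleph_0}<\aleph_{m_0+k}$, which holds for large $k$ regardless of $\CH$,'' and you later call this arithmetic essential. Both claims are wrong: by Hausdorff's formula $\aleph_{m}^{\aleph_0}=\max(\aleph_m,2^{\aleph_0})$ for finite $m\ge 1$, so if $2^{\aleph_0}>\aleph_\omega$ your inequality fails for \emph{every} $k$; and the lemma carries no cardinal-arithmetic hypothesis, so you are not free to ``arrange'' one in its proof. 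Fortunately none is needed, because the levels are finite: a node of $p_{k-1}$ at level $n_k$ is a finite sequence whose $j$-th entry lies in $\aleph_{d(j)}$, and by property (2) of the function $d$ fixed in \autoref{laver-namba-def-rep} every $j<n_k=d^{-1}_{\textup{min}}(m_0+k)$ satisfies $d(j)<m_0+k$; hence the number of such nodes is at most a finite product of cardinals $\le\aleph_{m_0+k-1}$, which is $\le\aleph_{m_0+k-1}<\aleph_{m_0+k}$. This is precisely the paper's justification (``since $d(n')<m$ for all $n'<k_n$\,\dots''), and it is what keeps the lemma valid in the generality in which it is stated and applied. A cosmetic point in the same step: for small $k$ one may have $n_k<|\stem(p)|$, so either increase $m_0$ so that $n_0\ge|\stem(p)|$, or note that below the stem the value $\dot{b}_{\textup{\textsf{prod}}}(m_0+k)$ is already decided by the stem and the required bound is immediate; with these repairs your argument coincides with the paper's proof.
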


\begin{proof} First we need to argue that $\bL$ forces $\dot{b}_{\textup{\textsf{prod}}}$ to be an upper bound of $\vec{f}$. This follows from a relatively simple argument: If $\alpha<\aleph_{\omega+1}$ and $p \in \bL$, construct $q \le_0 p$ such that for all $k=d^{-1}_\textup{min}(n) \ge |\stem q|$ and $t \in q$ with $|t|=k$, $\osucc_q(t) \setminus f_\alpha(k) = \emptyset$.

Now we prove the more complicated assertion, which is that if $p \Vdash \textup{``}\dot{h} < \dot{b}_{\textup{\textsf{prod}}}\textup{''}$ where $\dot{h}$ is taken to be arbitrary, then we can find $q \le p$ such that for some $\xi<\aleph_{\omega+1}$, $q \Vdash \textup{``}\dot{h}<^* f_\xi\textup{''}$.

Let $\{k_n:n<\omega\}$ enumerate $\{d^{-1}_\textup{min}(n):n<\omega\}$. Let $N$ be such that $p \Vdash \textup{``}\forall n \ge N,\dot{h}(n) < \dot{b}_{\textup{\textsf{prod}}}(n)\textup{''}$.

We will define a fusion sequence $\seq{p_n}{n<\omega}$ and values $g(n)$ of a function $g$ by induction on $n$ such that $p_n \Vdash \textup{``}\dot{h}(n)<g(n)\textup{''}$ for $n \ge N$. Let $p_n = p$ for $n \le N$. Suppose we have defined $p_{n-1}$ and $n \ge N$. For all $t \in p_{n-1}$ with $|t|=k_n$ and all $\alpha \in \osucc_{p_{n-1}}(t)$, apply \autoref{cummings-magidor-repetitions} to find $q'_{t {}^\frown \la \alpha \ra} \le p_{n-1} \rest t$ and some $\delta_{t {}^\frown \la \alpha \ra}$ such that $q'_{t {}^\frown \la \alpha \ra} \Vdash \textup{``}\dot{h}(n) \le \delta_{t {}^\frown \la \alpha \ra}\textup{''}$. By Fodor's Lemma, there is a stationary $S_t \subseteq \osucc_{p_{n-1}}(t)$ and a value $\delta_t$ such that for all $\alpha \in S_t$, $q_{t {}^\frown \la \alpha \ra} \Vdash \textup{``}\dot{h}(n) \le \delta_t \textup{''}$. Let $q_t = \bigcup_{\alpha \in S_t}q'_{t {}^\frown \la \alpha \ra}$.

 Then let $p_n = \bigcup \{q_t:t \in p_{n-1},|t|=k_n\}$. Let $g(n) = \sup \{ \delta_t:t \in p_{n-1},|t|=k_n\}$. Since $d(n')<m$ for all $n' < k_n$, it follows that $|\{t \in p_n:|t|=k_n\}|<\aleph_n$, and so $g(n)<\aleph_n$.

 We finally let $q = \bigcap_{n<\omega}p_n$ be the fusion limit.  Observe that $q \Vdash \textup{``}\dot{h}(n) \le g(n)$ for $n \ge N$. If $\xi<\aleph_{\omega+1}$ is large enough that $g<^* f_\xi$ then we are done. \end{proof}

\subsection{A Poset for Adding a Good Scale}\label{goodscale-forcing-sec}

Here we will develop a poset that forces that there is a good scale.

Fix a singular $\lambda$ of cofinality $\kappa$ and let $\seq{\lambda_i}{i<\kappa}$ be a strictly increasing sequence of regular cardinals converging to $\lambda$. We say $f <_j g$ if for all $i \ge j$, $f(i)<g(i)$. Hence $f<^\ast g$ if $f <_j g$ for some $j<\kappa$.

We define a poset for forcing a good scale.

\begin{definition} Given some $\vec \lambda =\seq{\lambda_i}{i<\kappa}$, let $\mathbb{G}(\vec \lambda)$ be a partial order whose conditions have the form $\seq{f_\beta}{\beta \le \alpha}$ for some $\alpha<\lambda^+$ such that for all $\beta \le \alpha$:

\begin{enumerate}

\item $f_\beta \in \prod_{i<\kappa}\lambda_i$;
\item for all $\gamma<\beta$, $f_\gamma<^\ast f_\beta$;
\item if $\cf(\beta)>\kappa$, then $\beta$ is a good point with respect to $\seq{f_\gamma}{\gamma<\beta}$.

\end{enumerate}

Ordering is by end-extension: if $p,q \in \mathbb{G}(\vec \lambda)$, then $p \le q$ if and only if $p \rest \dom q = q$. We drop the notation for $\vec \lambda$ when the context is clear.\end{definition}

\begin{proposition}\label{gs-poset-directed} $\mathbb{G}(\vec \lambda)$ is $\kappa^+$-directed closed.\end{proposition}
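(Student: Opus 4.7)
The plan is to take the union of the directed family and, if the domain has no maximum, append one extra function on top. Given a directed family $\{p_\xi : \xi < \mu\} \subseteq \mathbb{G}(\vec\lambda)$ with $\mu < \kappa^+$, the end-extension ordering forces any two members $p_\xi$, $p_\eta$ to agree on their common initial segment (via a common lower bound supplied by directedness), so the union $p^* := \bigcup_{\xi<\mu} p_\xi$ is a well-defined $<^*$-increasing sequence $\seq{f_\beta}{\beta \in D}$ which inherits goodness at each $\beta \in D$ from whichever $p_\xi$ has $\beta$ in its domain.

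If $D$ has a maximum element, then $p^*$ is already a condition extending every $p_\xi$, and we are done. Otherwise $\alpha^* := \sup D$ is a limit ordinal with $\cf(\alpha^*) \le |\mu| \le \kappa$, and it remains to define $f_{\alpha^*}$. Fix a cofinal sequence $\seq{\beta_\gamma}{\gamma < \cf(\alpha^*)}$ in $\alpha^*$. For each $i<\kappa$ with $\lambda_i > \kappa$ (a cofinite collection, since $\lambda_i \to \lambda > \kappa$), set
\[
f_{\alpha^*}(i) := \sup_{\gamma<\cf(\alpha^*)} f_{\beta_\gamma}(i) + 1,
\]
and set $f_{\alpha^*}(i) := 0$ for the (finitely many) remaining $i$. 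Regularity of $\lambda_i$ combined with $\lambda_i > \kappa \ge \cf(\alpha^*)$ keeps this supremum strictly below $\lambda_i$, so $f_{\alpha^*} \in \prod_{i<\kappa}\lambda_i$.

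Verifying $f_\beta <^* f_{\alpha^*}$ for each $\beta < \alpha^*$ is routine: pick $\gamma$ with $\beta \le \beta_\gamma$ and observe that for cofinitely many $i$, $f_\beta(i) < f_{\beta_\gamma}(i) < f_{\alpha^*}(i)$. The single new point to check is goodness at the newly added index $\alpha^*$, and here is the conceptual heart of the proof: the goodness clause of $\mathbb{G}(\vec\lambda)$ only constrains indices of cofinality strictly greater than $\kappa$, whereas $\cf(\alpha^*) \le \kappa$ by construction, so there is nothing to verify. This is the only point requiring any insight; the remaining possible obstacle -- keeping $f_{\alpha^*}$ inside $\prod_i \lambda_i$ -- is handled by the regularity argument above, which is available precisely because directed limits of size at most $\kappa$ land at ordinals of cofinality at most $\kappa$.
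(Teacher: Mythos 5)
Your proof is correct and takes essentially the same route as the paper's: the end-extension (tree-like) structure makes any directed family of size $\le\kappa$ a chain, and the new top index has cofinality at most $\kappa$, so the goodness requirement (which only applies at cofinality $>\kappa$) imposes no constraint on the appended bound. The only cosmetic point is that for uncountable $\kappa$ the phrases ``cofinitely many $i$'' and ``finitely many remaining $i$'' should read ``all $i \ge j$ for some $j<\kappa$'', which is what the $<^*$-ordering requires and what your argument in fact provides, since $\seq{\lambda_i}{i<\kappa}$ is strictly increasing with supremum $\lambda>\kappa$.
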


\begin{proof} $\mathbb{G}(\vec \lambda)$ is \emph{tree-like}, meaning that $p,q \in \mathbb{G}$ are compatible if and only if $p \le q$ or $q \le p$. Therefore it is enough to show that $\mathbb{G}(\vec \lambda)$ is $\kappa^+$-closed. This follows from the facts that points $\beta$ with $\cf(\beta)<\kappa$ are automatically good and that we do not require points $\beta$ with $\cf(\beta) = \kappa$ to be good.\end{proof}

\begin{proposition} $\mathbb{G}(\vec \lambda)$ is $(\lambda+1)$-strategically closed.\footnote{For the definition of strategic closure, see \cite[Definition 5.15]{Handbook-Cummings}.}\end{proposition}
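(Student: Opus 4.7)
The plan is to exhibit a winning strategy for Player $\pII$ in the game of length $\lambda+1$: at each of her moves she will append a ``marker'' function at the top of the current condition that pointwise dominates everything below on a suitable end-segment of coordinates, so that at limit rounds of cofinality greater than $\kappa$ the accumulated markers automatically witness goodness of the new top point.

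Concretely, at a Player $\pII$ round responding to a position $p$ of top index $\alpha<\lambda^+$, let $i(p):=\min\{i<\kappa : \lambda_i > |\alpha|\}$, which exists because $|\alpha|<\lambda$ and $\lambda_i \to \lambda$. Define $h(p)\in\prod_{i<\kappa}\lambda_i$ by $h(p)(i):=\sup\{f(i) : f \text{ appears in } p\}+1$ for $i\ge i(p)$ and $h(p)(i):=0$ otherwise; this lies below $\lambda_i$ by regularity of $\lambda_i$ and the fact that $|\alpha|<\lambda_i$ for $i\ge i(p)$. Player $\pII$ plays the extension of $p$ by $h(p)$ at index $\alpha+1$, which is a valid condition since $h(p)>^\ast f$ for every $f$ appearing in $p$ and the new top has cofinality $1\le\kappa$, so no goodness is required of it. At a limit round $\xi$ Player $\pII$ plays the union of the previous plays together with a top function $f_{\gamma_\xi}$ at $\gamma_\xi:=\sup_{\eta<\xi}\dom(p_\eta)$.

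The main obstacle is a limit round $\xi$ with $\cf(\gamma_\xi)=\mu$ and $\kappa<\mu<\lambda$ (here $\mu\ne\lambda$ because $\lambda$ is singular), where $\gamma_\xi$ must be made good. Among the cofinally many Player $\pII$ rounds $\eta<\xi$ the thresholds $i(\eta)$ define a map into $\kappa$; since $\mu>\kappa$ is regular, there is a cofinal $B\subseteq\xi$ of order type $\mu$ on which $i(\eta)$ takes a constant value $j^\ast<\kappa$. By construction of the markers, $h_{\eta_1}(i)<h_{\eta_2}(i)$ for $\eta_1<\eta_2$ in $B$ and every $i\ge j^\ast$, so $\seq{h_\eta}{\eta\in B}$ is pointwise strictly increasing above $j^\ast$. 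Pick $j^\dagger\ge j^\ast$ with $\lambda_{j^\dagger}>\mu$ and set $f_{\gamma_\xi}(i):=\sup_{\eta\in B}h_\eta(i)$ for $i\ge j^\dagger$ (which is below $\lambda_i$ by regularity since $\lambda_i>\mu=|B|$) and $f_{\gamma_\xi}(i):=0$ for $i<j^\dagger$. The indices of the markers for $\eta\in B$ form an unbounded $A\subseteq\gamma_\xi$ of order type $\mu$ with $\seq{f_\beta(i)}{\beta\in A}$ strictly increasing for $i\ge j^\dagger$, which directly witnesses goodness of $\gamma_\xi$; moreover $f_{\gamma_\xi}>^\ast f_\beta$ for every $\beta<\gamma_\xi$, because $f_\beta$ appears in some $p_\eta$ with $\eta\in B$ and is then dominated by $h_\eta$ above $j^\ast$, hence by $f_{\gamma_\xi}$ above $j^\dagger$. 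Limit rounds of low cofinality $\cf(\gamma_\xi)\le\kappa$, including the final round $\xi=\lambda$ of cofinality $\kappa$, require no goodness and are handled by the analogous but easier construction (and in any case are subsumed by the $\kappa^+$-directed closure from \autoref{gs-poset-directed}).
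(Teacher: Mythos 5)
There is a genuine gap at the very first step, in the definition of your markers. You set $i(p):=\min\{i<\kappa:\lambda_i>|\alpha|\}$, where $\alpha$ is the top index of the current position, and you justify its existence by asserting $|\alpha|<\lambda$. But conditions in $\mathbb{G}(\vec\lambda)$ have top index an arbitrary ordinal $\alpha<\lambda^+$, and conditions with $|\alpha|=\lambda$ do exist (one can build $<^*$-increasing sequences with the required goodness of every length $<\lambda^+$ by exactly this kind of construction run without an opponent), so nothing stops Player I from jumping to such a condition at some round. At such a position no threshold $i(p)$ exists, and worse, no $h\in\prod_{i<\kappa}\lambda_i$ can dominate \emph{all} functions appearing in $p$ pointwise on a tail of coordinates: for each $i$ there are $\lambda>\lambda_i$ many values $f_\beta(i)<\lambda_i$, so their supremum may be $\lambda_i$. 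Since your goodness argument at high-cofinality limits rests precisely on this "dominate everything in the current position above a fixed threshold" property of the markers, the strategy as written is undefined (and not repairable by taking sups) once the opponent plays a long condition. Note that appending \emph{some} legal top is never a problem—$<^*$-domination of the whole condition follows by transitivity from dominating the previous top—the issue is only the uniform tail-pointwise domination you demand of the markers.

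The paper's strategy avoids this by anchoring the threshold to the \emph{game stage} rather than to the size of the condition: at stage $\xi$ it takes $j$ minimal with $\xi<\lambda_j$, and Player II's new top is required to dominate only her own previous top pointwise above $j$ (and the opponent's latest top merely in $<^*$). Since at stage $\xi$ only $|\xi|<\lambda$ many of Player II's tops have been played, and $|\xi|<\lambda_j$, such a top always exists regardless of how far Player I jumps; and below a limit stage $\xi$ of cofinality $>\kappa$ all the relevant thresholds are uniformly bounded by $j(\xi)$, so the accumulated tops witness goodness directly (no pigeonhole needed). Your remaining machinery—the stabilization of thresholds along a cofinal set of order type $\cf(\gamma_\xi)>\kappa$, the direct verification of the definition of a good point via the marker indices, and delegating the $\cf\le\kappa$ limits and the final move to the closure of \autoref{gs-poset-directed}—is correct and would go through verbatim once the markers are re-specified to dominate only the previously played markers above a stage-based threshold.
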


\begin{proof} The play will take the form of a decreasing sequence $\seq{p_\xi}{\xi \le \lambda} \subseteq \G(\vec \lambda)$ in which $\gamma_\xi = \max \dom p_\xi$, i.e$.$ each $p_\xi$ will formally have the presentation $p_\xi = \seq{f^\xi_\zeta}{\zeta \le \gamma_\xi}$, but because we have $p_\xi \le p_{\xi'}$ for $\xi > \xi'$, we can write $p_\xi = \seq{f_\zeta}{\zeta \le \gamma_\xi}$. Player $\textup{\textsf{II}}$ will play so that if $\xi < \xi' < \lambda$ are such that $\xi,\xi'$ are even and $j$ is minimal such that $\xi<\xi' < \lambda_j$, then $f_\xi <_j f_{\xi'}$.

Suppose $\xi<\lambda$ is an even successor with $j$ minimal such that $\xi<\lambda_j$ and $\xi = \eta+2$. Then Player $\textup{\textsf{II}}$ will choose $h$ such that $p_{\eta+1}(\gamma_{\eta+1}) <^* h$ and $p_\eta(\gamma_\eta) <_j h$ and will play $p_\xi:= p_{\eta+1} {}^\frown \langle \gamma_{\eta+1}+1,h \rangle$.

Suppose $\xi<\lambda$ is a limit and $j$ is minimal such that $\xi<\lambda_j$. Then let $\gamma_\xi = \sup_{\eta<\xi}\gamma_\eta$ and let $h(i) = \sup \{f_{\gamma_\eta}(i): \eta<\xi,\eta \textup{ even}\}$ for $i \ge j$ and $h(i) = 0$ otherwise. If $\cf(\xi) \le \kappa$, there is no consideration with regard to goodness. If $\cf(\xi) > \kappa$, then if we let $f^*_{\gamma_\eta}(i)=f_{\gamma_\eta}(i)$ for $i \ge \lambda_j$ and $f^*_{\gamma_\eta}(i)= 0$ otherwise, then it follows by construction that $\seq{f^*_{\gamma_\eta}}{\eta<\xi,\eta \textup{ even}}$ is $<_j$-increasing and cofinally interleaved with $\seq{f_{\gamma_\eta}}{\eta<\xi}$. This is one of the equivalent definitions of goodness, so if we define $p_\xi$ such that $\dom p_\xi = \gamma_\xi+1$, $p_\xi \le p_\eta$ for $\eta<\xi$, and $p_\xi(\gamma_\xi) = h$ where $h$ is an exact upper bound of $\seq{f_{\gamma_\eta}}{\eta<\xi}$, then $p_\xi$ is a condition.

If $\xi=\lambda$, then we can find a lower bound by \autoref{gs-poset-directed}.\end{proof}

By distributivity we have cardinal preservation.

\begin{proposition} If $2^\lambda=\lambda^+$ then $\mathbb{G}(\vec \lambda)$ preserves cardinals and cofinalities.\end{proposition}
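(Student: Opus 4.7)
The plan is standard and has two parts, handling cardinals and cofinalities below and above $\lambda^+$ separately.

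\emph{Below $\lambda^+$.} I would invoke the previous proposition: since $\mathbb{G}(\vec\lambda)$ is $(\lambda+1)$-strategically closed, it adds no new sequences of ordinals of length $\le \lambda$. (This is the standard consequence of strategic closure, proved by a back-and-forth fusion along the strategy against a density argument.) This immediately preserves every cardinal $\mu \le \lambda^+$: any collapse would require a new surjection from some $\mu' < \mu \le \lambda^+$ onto $\mu$, hence a new sequence of length $\mu' \le \lambda$. Similarly, if some regular $\nu \le \lambda^+$ had its cofinality reduced in the extension, the witnessing cofinal map would itself be a new sequence of length $\le \lambda$, contradiction.

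\emph{Above $\lambda^+$.} Here I use the hypothesis $2^\lambda = \lambda^+$ to bound $|\mathbb{G}(\vec\lambda)|$ and then apply a chain condition argument. A condition is a sequence $\seq{f_\beta}{\beta\le\alpha}$ with $\alpha<\lambda^+$, so the indexing set has size at most $\lambda$. Each $f_\beta$ lies in $\prod_{i<\kappa}\lambda_i$, and since $\kappa = \cf(\lambda) \le \lambda$ we have
\[
\Bigl|\prod_{i<\kappa}\lambda_i\Bigr| \;\le\; \lambda^\kappa \;\le\; \lambda^\lambda \;=\; 2^\lambda \;=\; \lambda^+.
\]
Therefore the total number of conditions is at most $(\lambda^+)^\lambda \le (2^\lambda)^\lambda = 2^\lambda = \lambda^+$, so $\mathbb{G}(\vec\lambda)$ has the $\lambda^{++}$-chain condition. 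This preserves all cardinals and cofinalities $\ge \lambda^{++}$.

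Combining the two parts handles every cardinal and cofinality. I do not expect any real obstacle: the only minor point of care is ensuring the closure bound does the work for $\lambda^+$ itself (which it does, as above) so that the chain-condition argument only needs to take over strictly above $\lambda^+$.
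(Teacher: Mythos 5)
Your proof is correct and is exactly the argument the paper intends: the paper simply asserts the result "by distributivity" (the $(\lambda+1)$-strategic closure giving $\lambda^+$-distributivity, hence preservation up to and including $\lambda^+$), with the hypothesis $2^\lambda=\lambda^+$ there precisely to bound $|\mathbb{G}(\vec\lambda)|$ by $\lambda^+$ and get the $\lambda^{++}$-c.c.\ above. Your write-up just fills in the standard details of that two-part decomposition.
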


\begin{proposition}\label{g-adds-good-scale} $\mathbb{G}(\vec \lambda)$ adds a good scale to $\vec \lambda$.\end{proposition}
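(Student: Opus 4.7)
The plan is to let $\vec f = \bigcup G$ for a $\mathbb{G}(\vec \lambda)$-generic $G$ and verify the four properties required: (a) $\vec f$ is a sequence of length $\lambda^+$ of functions in $\prod_{i<\kappa}\lambda_i$, (b) $\vec f$ is $<^*$-increasing, (c) $\vec f$ is cofinal in $\prod_{i<\kappa}\lambda_i$ (so that (a)-(c) make it a scale), and (d) every $\alpha<\lambda^+$ with $\cf(\alpha)>\kappa$ is a good point. Properties (b) and (d) will be immediate from clauses (2) and (3) in the definition of $\mathbb{G}(\vec \lambda)$, and the membership part of (a) is clause (1). So the real work is establishing length $\lambda^+$ for (a) and cofinality for (c).

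For the length, I will upgrade the $(\lambda+1)$-strategic closure argument to show that $\mathbb{G}(\vec \lambda)$ is in fact $\lambda^+$-strategically closed. The point is that for any limit $\xi<\lambda^+$ one has $\cf(\xi)\le|\xi|\le\lambda$ and regularity of $\cf(\xi)$ forces $\cf(\xi)<\lambda$ (since $\lambda$ is singular). When $\cf(\xi)\le\kappa$ I can appeal to Proposition \ref{gs-poset-directed} for a lower bound; when $\kappa<\cf(\xi)<\lambda$ the strategy from the existing proof produces an exact upper bound $h$ with $h(i)<\lambda_i$ for all $i$ such that $\lambda_i>\cf(\xi)$ (such $i$ exist because $\lambda_i\to\lambda>\cf(\xi)$), and goodness at the corresponding point follows from the interleaving trick exactly as in the cited proof. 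Once $\lambda^+$-strategic closure is in hand, the set $D_{\alpha^*}:=\{p:\max\dom p\ge \alpha^*\}$ is dense for each $\alpha^*<\lambda^+$: beginning with any $p$ of length $\alpha<\alpha^*$, Player $\textup{\textsf{II}}$ plays the strategy in a game of length $|\alpha^*|+1<\lambda^+$, incrementing $\max\dom$ by at least one at each successor stage, and ends with a condition in $D_{\alpha^*}$.

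For cofinality, I will first observe that $\kappa^+$-directed closure (Proposition \ref{gs-poset-directed}) implies $\kappa^+$-distributivity, so $(\prod_{i<\kappa}\lambda_i)^{V[G]}=(\prod_{i<\kappa}\lambda_i)^V$. Then, for any $h\in V\cap\prod_{i<\kappa}\lambda_i$, the set $D_h:=\{p:\exists\alpha\in\dom p,\ h<^* p(\alpha)\}$ is dense: given $p$ with maximum index $\alpha$ and top value $p(\alpha)$, set $h'(i):=\max\{h(i),p(\alpha)(i)\}+1$ and extend by one step to $p{}^\frown\langle\alpha+1,h'\rangle$; the new top index is a successor, so clause (3) is trivially satisfied and the extension is a condition. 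Meeting $D_h$ gives an $\alpha$ with $h<^* f_\alpha$ in $V[G]$, establishing cofinality.

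The main obstacle is simply verifying the extension of the strategic closure argument to games of length $<\lambda^+$; once the bookkeeping for arbitrary limit cofinalities $<\lambda$ is handled, everything else reduces to one-step density arguments.
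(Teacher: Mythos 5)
Your verification of cofinality is essentially the paper's own proof (decide the name for a function in the product using $\kappa^+$-closure/distributivity, then extend one step so the new top function dominates it), and your reduction of the remaining scale properties to the definition of conditions is fine. The genuine gap is in the length argument. First, the density computation is wrong as stated: in a game of length $|\alpha^*|+1$ in which $\max\dom$ increases by (at least) one at each successor stage and passes to suprema at limits, the final condition has $\max\dom$ roughly $\alpha+|\alpha^*|$, which need not reach $\alpha^*$ — e.g.\ for $\lambda=\aleph_\omega$ and $\alpha^*=\aleph_\omega\cdot 2$ you only reach about $\aleph_\omega$. Since a condition must define $f_\beta$ for \emph{every} $\beta\le\max\dom$, you cannot skip indices; to reach $\alpha^*$ by a game you need games of ordinal length about $\alpha^*$, i.e.\ $\beta$-strategic closure for all $\beta<\lambda^+$, and to make a big jump in a single move you would already need the density fact you are proving. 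Second, the "upgrade" of the strategy is not just bookkeeping carried over from the $(\lambda+1)$-proof: that proof's invariant ties the tail index to the stage's position below the $\lambda_j$'s ("$j$ minimal with $\xi<\lambda_j$"), which is meaningless for stages $\ge\lambda$, and a cardinality-indexed tail also breaks at stages of cardinality $\lambda$; one needs a genuinely different invariant, e.g.\ fixing for the whole game a single $j^*$ with $\lambda_{j^*}>|\text{game length}|$ and keeping Player $\textup{\textsf{II}}$'s top functions pointwise increasing above $j^*$, so that the relevant suprema at limit stages stay below each $\lambda_i$ for $i\ge j^*$.

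There is a much simpler route to the missing density, bypassing strategic closure entirely: given a condition $p$ with $\max\dom p=\beta$ and a target $\alpha^*<\lambda^+$, fix $j^*<\kappa$ with $\lambda_{j^*}>|\alpha^*|$ and define $f_\gamma$ for $\gamma\in(\beta,\alpha^*]$ recursively so that $f_{\beta+1}$ dominates $p(\beta)$ everywhere and, for $i\ge j^*$, the values are strictly increasing in $\gamma$ (taking $\sup+1$ at limits, which stays below $\lambda_i$ since there are at most $|\alpha^*|<\lambda_{j^*}\le\lambda_i$ many indices and $\lambda_i$ is regular). Then every new limit point is (very) good with uniform tail $j^*$, the extension is $<^*$-increasing over $p$, and so $\{q:\max\dom q\ge\alpha^*\}$ is dense; this is the piece your plan needs and the paper leaves implicit.
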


\begin{proof} If $\dot{h}$ is a $\mathbb{G}(\vec \lambda)$-name for a function in the product as forced by some condition $p$, then choose $p' \le p$ such that $p' \Vdash \textup{``}\dot{h} = g\textup{''}$. Then choose $p'' \le p'$ such that $p''(\max\dom p'')$ dominates $g$.\end{proof}

\section{Proving the Main Theorem}

This section constitutes the proof of \autoref{maintheorem}.

\subsection{Defining the Iteration and Establishing Basic Properties of the Target Model}\label{section-setup}

Now we define the model that witnesses the main theorem.

First we establish some notation. For this section, and some model $V_0$, let $\mathbb{L}^{V_0}$ refer $\bL$ is defined in the model $V_0$.

If $\tau$ is a cardinal, we recall Jensen's forcing $\S_\tau$ notion for adding a $\square_\tau$-sequence: Conditions are functions $s$ such that:

\begin{enumerate}

\item $\dom s \in \tau^+$,

\item $\forall \alpha \in \dom s$, $s(\alpha)$ is a closed unbounded subset of $\alpha$, of order-type $\le \tau$,

\item $\forall \alpha,\beta \in \dom s$, if $\beta$ is a limit point of $s(\alpha)$, then $s(\alpha) \cap \beta = s(\beta)$.

\end{enumerate}

We of course want:

\begin{fact} (See \cite[Section 6]{Cummings-Foreman-Magidor2001}.) The forcing $\S_\tau$:

\begin{enumerate}
\item is $(\tau+1)$-strategically closed and
\item adds a $\square_\tau$-sequence.
\end{enumerate}
\end{fact}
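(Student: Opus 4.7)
The plan is to treat the two clauses separately, with the bulk of the work in clause (1). For strategic closure I will exhibit a concrete winning strategy for Player II in the game of length $\tau+1$. The strategy maintains the invariant that after every Player II move at an \emph{even successor} stage $\xi$, the maximum $\gamma_\xi := \max\dom(p_\xi)$ is a successor ordinal equipped with the singleton club $\{\gamma_\xi - 1\}$. Concretely, at an even successor stage $\xi$ following Player I's move $p_{\xi - 1}$, Player II sets $\gamma_\xi := \gamma_{\xi - 1} + 1$ and $p_\xi(\gamma_\xi) := \{\gamma_{\xi - 1}\}$, which creates no coherence obligation since singletons have no limit points. At a limit stage $\xi$ Player II puts $\gamma_\xi := \sup_{\eta < \xi}\gamma_\eta$ and adjoins
\[
p_\xi(\gamma_\xi) \defeq \{\gamma_\eta \such \eta < \xi,\ \eta \textup{ even}\}
\]
on top of $\bigcup_{\eta < \xi} p_\eta$. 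The terminal move at stage $\xi = \tau$ is handled by the same recipe.

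Next I would verify that this limit-stage move produces a valid condition. Write $C := p_\xi(\gamma_\xi)$. It is cofinal in $\gamma_\xi$ (because the even $\eta<\xi$ are cofinal in $\xi$) and its order type is at most $\xi \le \tau$. Closedness and coherence both reduce to the same calculation: a limit point $\beta < \gamma_\xi$ of $C$ must itself be a limit ordinal, and tracing through the recipe shows $\beta = \gamma_{\eta^\ast}$ for some limit ordinal $\eta^\ast < \xi$, so $\beta \in C$; moreover, by induction, $p_{\eta^\ast}(\beta) = \{\gamma_\eta \such \eta < \eta^\ast,\ \eta \textup{ even}\} = C \cap \beta$, which is exactly what the coherence condition requires. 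The successor-top invariant further guarantees that each new $\gamma_\xi$ chosen at a limit stage is fresh, i.e.\ not already in $\dom(p_\eta)$ for $\eta < \xi$, so extending does not disturb any previously established coherence.

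For clause (2), I would combine clause (1) with a density argument to show that for every $\alpha < \tau^+$ the set $D_\alpha := \{p \in \S_\tau \such \alpha \in \dom(p)\}$ is dense below every condition. Given a starting condition, running Player II's strategy against any adversary whose moves extend the domain by at least one ordinal yields, within $|\alpha| + 1 \le \tau + 1$ rounds, a condition whose domain reaches past $\alpha$. Consequently, if $G$ is $\S_\tau$-generic, $S := \bigcup G$ is a function on $\tau^+$ whose pointwise values inherit the three defining properties of $\S_\tau$-conditions, and so $S$ is a $\square_\tau$-sequence.

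The main obstacle is verifying coherence at the limit stages of the strategy, where the newly assigned club $C$ on the fresh top $\gamma_\xi$ could in principle conflict with clubs chosen earlier at lower domain points. The successor-top invariant on Player II's even-successor moves is the device that sidesteps this concern: it forces every limit point of $C$ to itself have been a limit-stage top chosen by the same recipe, so coherence at that point was already secured by induction.
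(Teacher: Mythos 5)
Your clause (1) is correct and is essentially the standard argument (the paper itself only cites Cummings--Foreman--Magidor for this fact): Player II threads the tops $\gamma_\eta$ of her own moves, and the key verification, that every limit point of the limit-stage club $C$ equals $\gamma_{\eta^*}$ for some limit $\eta^*<\xi$ at which Player II herself assigned exactly $C\cap\gamma_{\eta^*}$, goes through. Two cosmetic remarks: ``even'' must be read as including limit ordinals (your verification tacitly assumes this), and the successor-top invariant is not actually needed for freshness at limit stages, since the union of the earlier conditions has domain exactly $\gamma_\xi$, so the new top is automatically outside it.

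Clause (2), however, has a genuine gap in the density argument. You claim that playing Player II's strategy against an adversary who extends the domain by at least one ordinal per round yields, within $|\alpha|+1\le\tau+1$ rounds, a condition whose domain passes $\alpha$. This conflates the cardinality of $\alpha$ with its order type: if the adversary extends minimally, then after $\tau+1$ rounds the domain has grown by only about $\tau$ in order type, whereas $\alpha<\tau^+$ can lie far beyond $\dom(p)+\tau$ (for instance $\alpha=\tau\cdot\omega$). To reach such an $\alpha$ the adversary must make long jumps, and the existence of conditions realizing those jumps is exactly the density of the sets $D_\beta$ that is being proved, so the argument as stated is circular at this point. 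The standard repair is an induction on $\alpha$: assuming $D_\beta$ is dense for all $\beta<\alpha$, fix a cofinal sequence $\seq{\alpha_i}{i<\cf(\alpha)}$ in $\alpha$ (so $\cf(\alpha)\le\tau$), and run the game for $\cf(\alpha)+1$ stages, letting Player I at step $i$ play the restriction to $\alpha_i+1$ of a condition containing $\alpha_i$ in its domain (which exists by the induction hypothesis); Player II's move at the limit stage then yields a condition with top exactly $\alpha$. This is also precisely why $(\tau+1)$-strategic closure, rather than $\tau$-strategic closure, is what the statement records. Finally, you should add one line noting that by clause (1) the forcing adds no new $\tau$-sequences of ordinals, so $\tau^+$ is preserved and the union of the generic is a $\square_\tau$-sequence of the correct length $(\tau^+)^{V[G]}$.
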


Now let $\S = \prod_{n<\omega}\S_{\aleph_n}$. Also, let $\mathbb{G}^{V_0} = \mathbb{G}(\prod_{n<\omega} \aleph_n^{V_0})$, the poset defined in \autoref{goodscale-forcing-sec}.

We make a standard definition explicit for clarity:

\begin{definition} A \emph{wide Aronszajn tree} of height $\aleph_1$ is a tree $T$ of height $\aleph_1$ and \emph{any} width that has no cofinal branches.\end{definition}

So, wide Aronszajn trees are not required to have countable levels.

We will also employ Baumgartner's specializing forcing:

\begin{fact}[Baumgartner] Suppose that $T$ is a wide Aronszajn tree of height $\aleph_1$. Then there is a forcing $\B(T)$ such that $\B(T)$ has the countable chain condition and such that $\B(T)$ adds a function $b:T \to \omega$ such that if $t \sqsubseteq t'$ are elements of $T$, then $b(t) = b(t')$ implies $t = t'$ (i.e$.$ $\B(T)$ forces $T$ to be special). (See \cite[Chapter 16]{Jech2003}.)\end{fact}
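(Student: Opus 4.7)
The plan is to use Baumgartner's standard specializing forcing: conditions are finite partial functions $b : T \to \omega$ such that $b(s) \ne b(t)$ whenever $s \sqsubsetneq t$ are both in $\dom b$, ordered by reverse inclusion. Density of $\{b : t \in \dom b\}$ for each $t \in T$ is immediate, since a given $b$ has only finitely many values on nodes of $\dom b$ that are comparable to $t$, so a fresh color from $\omega$ is always available for an extension. Hence the generic produces a total specializing function $b : T \to \omega$.

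The real work is to verify the countable chain condition, and this is where the no-cofinal-branch hypothesis is used. Given any family $\{p_\alpha\}_{\alpha < \omega_1}$ of conditions, first apply the $\Delta$-system lemma to pass to an uncountable $S \s \omega_1$ with $\{\dom p_\alpha : \alpha \in S\}$ a $\Delta$-system with root $r$. Thin $S$ further so that all $p_\alpha \rest r$ coincide, $|\dom p_\alpha \setminus r| = n$ is a fixed integer, and under a canonical enumeration $t^\alpha_1, \ldots, t^\alpha_n$ of $\dom p_\alpha \setminus r$ (ordered by $T$-height, with ties broken in some fixed way) both the color tuple $(c_i)_{i \le n} := (p_\alpha(t^\alpha_i))_{i \le n}$ and the height tuple $(\eta_i)_{i \le n}$ are constant across $\alpha \in S$.

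For distinct $\alpha, \beta \in S$, the conditions $p_\alpha$ and $p_\beta$ are compatible iff no pair $(i,j)$ with $c_i = c_j$ admits $t^\alpha_i$ and $t^\beta_j$ strictly $\sqsubseteq$-comparable in $T$; the case $i = j$ causes no problem because nodes at the same height outside $r$ must be incomparable. Suppose for contradiction that every distinct pair in $S$ is incompatible. A pigeonhole on the finitely many index-pairs $(i,j)$ with $c_i = c_j$, combined with a Ramsey-style partition argument on $[S]^2$ (e.g., Erd\H{o}s--Dushnik--Miller $\aleph_1 \to (\aleph_1, \omega)^2$), produces an uncountable $\sqsubseteq$-chain in $T$ built from witnesses $t^\alpha_{i_0}$ for a suitable fixed $i_0$. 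Such a chain has order type $\omega_1$ in a tree of height $\omega_1$, so its $\sqsubseteq$-downward closure is a cofinal branch of $T$, contradicting the assumption that $T$ is wide Aronszajn.

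The principal obstacle I expect is this last step: the extraction of an uncountable chain is delicate because, at each stage $\xi < \omega_1$ of a recursive construction, one must use the incompatibility pattern to choose $\alpha_\xi \in S$ large enough that the witness $t^{\alpha_\xi}_{i_0}$ strictly extends all previously selected witnesses. Getting the direction of the comparability relation to stabilize (so that ``above'' always means above in $T$ and not below) is where the partition theorem earns its keep, and one needs the combination of fixed heights $\eta_i$ and the pigeonholed witness-pair $(i_0, j_0)$ to guarantee that the new witness is strictly above the supremum of the previously constructed chain.
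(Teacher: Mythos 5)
You are proving a fact the paper only cites (to Jech, Ch.~16), so I judge the attempt on its own merits; the overall choice of forcing (finite partial specializing functions) is the intended one, but two steps do not work as written. First, the thinning ``so that the height tuple $(\eta_i)_{i\le n}$ is constant across $\alpha\in S$'' is impossible: the off-root nodes of $\aleph_1$-many conditions can occupy $\aleph_1$-many different heights (e.g.\ the $\alpha$-th condition may contain a node of height $\alpha$), so no pigeonhole yields a constant height pattern on an uncountable subfamily. Everything you later hang on this --- the remark that the case $i=j$ is harmless ``because nodes at the same height outside $r$ must be incomparable'', and the use of ``fixed heights $\eta_i$'' in the chain extraction --- therefore has no foundation. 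The classical argument does something weaker: using that the levels of an Aronszajn tree are \emph{countable}, only countably many of the pairwise disjoint domains can meet any fixed initial part of the tree, so one can thin so that the domains occupy pairwise disjoint blocks of levels; constancy of heights is never available.

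Second, the Erd\H{o}s--Dushnik--Miller step cannot do what you ask of it: your family is by hypothesis homogeneous for the colouring ``incompatible'', so a partition theorem on $[S]^2$ returns nothing new. In the standard proof the uncountable branch is built by a direct recursion, and at that recursion one again uses countable levels (pigeonholing projections of witnesses into a fixed level, which is a countable set). This matters here beyond aesthetics: the Fact is applied in the paper to \emph{wide} Aronszajn trees --- $T_{\textup{IA}}$ has uncountable levels --- which is precisely the case not covered verbatim by Jech's Chapter~16 lemma, since both the level-separation thinning and the recursion step quoted above break down when levels are uncountable. Establishing the countable chain condition of the finite-condition specializing poset for trees of height $\omega_1$ with no cofinal branches but arbitrary width requires a genuinely different argument (an elementary-submodel/reflection argument using the absence of cofinal branches, as in the analogous steps of \cite{Krueger2013} and \cite{Cox-Krueger2018}). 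As written, the proposal proves neither the classical case correctly nor the wide case the paper actually needs.
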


We need to describe a tree that will be used in this construction: Let $g$ be $\S \ast \dot{\G}$-generic and let $k$ be $ \dot{\mathbb{\mathbb{L}}}[g]$-generic over $V[g]$. Let $\nu$ be as in the statement of \autoref{mini-approx-thm-laver}. Let $D \subseteq \nu$ be a club of order-type $\omega_1$ in $V[g][k]$. Let $\mathcal{X}$ be the set of $\subset$-increasing and continuous chains in $H(\nu)^{V[g]} \cap ((H(\nu)^{V[g]})^{<\omega_1})$. Then let
\[
T_\textup{IA} = \{\seq{Z_i}{i<j} \in \mathcal{X}:\sup \bigcup_{i<j}(Z_i \cap \nu) \in D\}
\]
where the tree order is determined by end extension, i.e$.$ $\seq{Z^0_i}{i<j_0} \le_{T_\textup{IA}} \seq{Z^1_i}{i<j_1}$ if and only if $j_1 \ge j_0$ and $Z^0_i = Z^1_i$ for all $i<j_0$.

\begin{claim}\label{baumgartner-claim} Let $g$ be $\S \ast \dot{\G}$-generic, let $k$ be $\dot{\mathbb{\mathbb{L}}}[g]$-generic over $V[g]$. Let $\dot{\F}$ be the $\S \ast \dot{\G} \ast \dot{\bL}$ name for $\B(T_\textup{IA})$ if $T_\textup{IA}$ has height $\ge \omega_1$ and let $\dot{\F}$ be the name for the trivial forcing if $T_{\textup{IA}}$ has height $<\omega_1$. Let $f$ be $\dot{\F}$-generic over $V[g][k]$. Then if $W \supseteq V[g][k][f]$ is an outer model preserving $\omega_1$ (and hence $\cf(\nu)=\omega_1$), then there is no continuous sequence $\seq{M_i}{i<\omega_1}$ such that $\seq{M_i}{i<j} \in H(\nu)^{V[g]}$ for all $j<\omega_1$ and $H(\nu)^{V[g]} = \bigcup_{i<\omega_1}M_i$.\end{claim}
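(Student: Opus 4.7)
The plan is to argue by contradiction. Suppose that $W$ preserves $\omega_1$ and contains such a continuous sequence $\seq{M_i}{i<\omega_1}$ with $\seq{M_i}{i<j} \in H(\nu)^{V[g]}$ for all $j<\omega_1$ and $\bigcup_{i<\omega_1} M_i = H(\nu)^{V[g]}$. I would extract from this sequence a $\sqsubseteq$-chain through $T_{\textup{IA}}$ of order-type $\omega_1$ in $W$, leading to a contradiction depending on which clause of the definition of $\dot{\F}$ applies.

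First I would observe that, since $\omega_1$ is preserved in $W$, the club $D$ remains a club in $\nu$ and $\cf^W(\nu) = \omega_1$. I would then define the function $\sigma \colon \omega_1 \to \nu$ by $\sigma(j) := \sup \bigcup_{i<j}(M_i \cap \nu)$. Using that the sequence is continuous and that $\bigcup_i M_i = H(\nu)^{V[g]} \supseteq \nu$, the function $\sigma$ is continuous at limits and cofinal in $\nu$. I would then argue that the set
\[
E := \{j < \omega_1 : \sigma(j) \in D\}
\]
is a club in $\omega_1$. Closure follows from continuity of $\sigma$ together with closedness of $D$; unboundedness follows from an alternating construction: given $j_0$, recursively pick $\alpha_n \in D$ above $\sigma(j_n)$ and $j_{n+1}$ least with $\sigma(j_{n+1}) \ge \alpha_n$, and observe that the supremum of any such countable sequence of suprema must lie in $D$.

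Next I would check that for each $j \in E$, the initial segment $\seq{M_i}{i<j}$ is a node of $T_{\textup{IA}}$: the chain is increasing and continuous and lies in $H(\nu)^{V[g]}$ by hypothesis, its elements $M_i$ have the required size dictated by the setup, and $\sup \bigcup_{i<j}(M_i \cap \nu) = \sigma(j) \in D$ by the definition of $E$. Consequently, $\{\seq{M_i}{i<j} : j \in E\}$ is a $\sqsubseteq$-chain in $T_{\textup{IA}}$ of order-type $\omega_1$ in $W$.

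Finally I would derive a contradiction in both branches of the definition of $\dot{\F}$. If $T_{\textup{IA}}^{V[g][k]}$ has height $<\omega_1$, then, since $T_{\textup{IA}}$ is defined absolutely from $H(\nu)^{V[g]}$ and $D$ (which lie in $V[g][k] \subseteq W$) and $\omega_1$ is preserved in $W$, its height remains $<\omega_1$ in $W$, contradicting the existence of an $\omega_1$-chain. If instead $T_{\textup{IA}}^{V[g][k]}$ has height $\ge \omega_1$, then $\dot{\F} = \B(T_{\textup{IA}})$, so $f$ yields a specializing function $b \colon T_{\textup{IA}} \to \omega$ in $V[g][k][f] \subseteq W$; this $b$ must be injective on any $\sqsubseteq$-chain, which is incompatible with the existence of an $\omega_1$-chain. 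The main obstacle I expect is verifying that $E$ is a club, which requires the careful continuity-and-closedness analysis above; once this is in hand, the remainder of the argument is just absoluteness combined with the defining property of Baumgartner's specializing forcing.
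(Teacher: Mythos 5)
Your proof is correct and takes essentially the same approach as the paper's: both extract, via the club of stages whose suprema land in $D$, an uncountable chain of nodes of $T_\textup{IA}$ from the hypothesized continuous sequence, and then derive the contradiction either from the countable height (absoluteness) or from the specializing function together with preservation of $\omega_1$. The only notable difference is that the paper's proof also verifies along the way that $T_\textup{IA}$ has no cofinal branches in $V[g][k]$ (so that it is a genuine wide Aronszajn tree and Baumgartner's forcing applies as stated, which matters elsewhere in the construction); you skip this, but it is not needed for the contradiction itself.
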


There is also a specific case of a theorem of Cummings that we will use for clarity \cite[Theorem 2]{Cummings1997}:

\begin{fact}[Cummings]\label{cummings-collapse} Let $W \supseteq V$ be an extension such that $W \models \textup{``}|\aleph_n^V|=\aleph_1\textup{''}$ for all $n<\omega$. if $V \models \textup{``There is a good scale on }\aleph_\omega\textup{''}$, then $W \models \textup{``}|\aleph_{\omega+1}^V|=\aleph_1 \textup{''}$.\end{fact}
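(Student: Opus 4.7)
The plan is to show $|\aleph_{\omega+1}^V|^W \le \aleph_1$ (the reverse inequality being trivial) by establishing two separate bounds: (1) every $\alpha < \aleph_{\omega+1}^V$ satisfies $|\alpha|^W \le \aleph_1$, and (2) $\cf^W(\aleph_{\omega+1}^V) \le \aleph_1$. Together these give the conclusion by writing $\aleph_{\omega+1}^V$ as a union of at most $\aleph_1$-many ordinals each of $W$-cardinality at most $\aleph_1$.

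Claim (1) is a transfinite induction on $\alpha$ and uses only the hypothesis on the $\aleph_n^V$. If $\alpha < \aleph_\omega^V$ then $|\alpha|^V = \aleph_n^V$ for some $n$, and the hypothesis gives $|\alpha|^W \le \aleph_1$. For $\aleph_\omega^V \le \alpha < \aleph_{\omega+1}^V$, the $V$-cofinality of $\alpha$ is a regular $V$-cardinal at most $\aleph_\omega^V$, hence of the form $\aleph_n^V$; so $\cf^W(\alpha) \le |\aleph_n^V|^W = \aleph_1$, and combining with the inductive hypothesis yields $|\alpha|^W \le \aleph_1$.

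For claim (2) I would argue by contradiction, assuming $\cf^W(\aleph_{\omega+1}^V) \ge \aleph_2$. Let $\vec f = \seq{f_\alpha}{\alpha < \aleph_{\omega+1}^V}$ be the good scale in $V$ on $\prod_{n<\omega}\aleph_n^V$ with club $D$ of good points, and set $D_1 = D \cap \cof^V(\omega_1)$. Note first that $\sup_\alpha f_\alpha(n) = \aleph_n^V$ for each $n$, since otherwise the function $n \mapsto \sup_\alpha f_\alpha(n)$ would lie in the product and violate cofinality of $\vec f$. For each $\alpha \in D_1$, \autoref{goodness-characterized} provides a pointwise $<$-increasing sequence $\seq{h^\alpha_\gamma}{\gamma<\omega_1}$ in $V$ cofinally interleaved with $\vec f \rest \alpha$, and an exact upper bound $e_\alpha$ with $e_\alpha(n) = \sup_\gamma h^\alpha_\gamma(n)$ of $V$-cofinality $\omega_1$ for all large $n$; a similar contradiction argument (applied to $n \mapsto M_n + 1$ on the set where $\sup_\alpha e_\alpha(n) = M_n < \aleph_n^V$) shows $\sup_\alpha e_\alpha(n) = \aleph_n^V$ for all but finitely many $n$. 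Now in $W$, for each $n$ fix an increasing cofinal sequence $\seq{a^n_\xi}{\xi < \omega_1}$ in $\aleph_n^V$ and set $g_\xi(n) = a^n_\xi$. The key step is to produce, for each $\xi < \omega_1$, some $\gamma_\xi < \aleph_{\omega+1}^V$ with $g_\xi <^\ast f_{\gamma_\xi}$. Granted this, the set $\{\gamma_\xi : \xi<\omega_1\}$ is cofinal in $\aleph_{\omega+1}^V$: for any $\alpha < \aleph_{\omega+1}^V$, choosing $\xi_n<\omega_1$ with $a^n_{\xi_n} > f_\alpha(n)$ and setting $\xi^\ast = \sup_n \xi_n < \omega_1$ gives $f_\alpha < g_{\xi^\ast} <^\ast f_{\gamma_{\xi^\ast}}$, so $\alpha < \gamma_{\xi^\ast}$. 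This cofinal $\omega_1$-sequence contradicts the assumed lower bound on $\cf^W(\aleph_{\omega+1}^V)$.

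The main obstacle is producing $\gamma_\xi$, since $g_\xi$ need not lie in $V$ and so the scale property in $V$ does not apply directly. My approach is to locate $\alpha^\ast_\xi \in D_1$ such that $g_\xi$ is pointwise dominated by some $h^{\alpha^\ast_\xi}_{\gamma_\xi}$; then cofinal interleaving furnishes $\delta < \alpha^\ast_\xi$ with $h^{\alpha^\ast_\xi}_{\gamma_\xi} <^\ast f_\delta$, yielding $g_\xi <^\ast f_\delta$. Using $\sup_\alpha e_\alpha(n) = \aleph_n^V$ for cofinitely many $n$ I can pick $\beta^\xi_n \in D_1$ with $e_{\beta^\xi_n}(n) > g_\xi(n)$; the contradictory hypothesis $\cf^W(\aleph_{\omega+1}^V) > \omega$ keeps $\sup_n \beta^\xi_n$ below $\aleph_{\omega+1}^V$, so $\alpha^\ast_\xi \in D_1$ above this sup is available. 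Given a choice of $\alpha^\ast_\xi$ with $e_{\alpha^\ast_\xi}(n) > g_\xi(n)$ pointwise, the pointwise $<$-increasing property of $\seq{h^{\alpha^\ast_\xi}_\gamma}{\gamma<\omega_1}$ lets me pick $\gamma_n < \omega_1$ with $h^{\alpha^\ast_\xi}_{\gamma_n}(n) > g_\xi(n)$ and set $\gamma_\xi = \sup_n \gamma_n < \omega_1$, so that $h^{\alpha^\ast_\xi}_{\gamma_\xi}$ dominates $g_\xi$ pointwise. The delicate step is arranging $\alpha^\ast_\xi$ so that $e_{\alpha^\ast_\xi}(n) > g_\xi(n)$ holds at each $n$ (not merely eventually); the pointwise monotonicity of the witnessing $h$-sequences and the $\le^\ast$-monotonicity of the $e_\alpha$'s in $\alpha$ together with cofinal interleaving are what will secure a uniform enough threshold to make this work.
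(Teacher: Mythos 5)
The paper does not prove this statement; it cites Cummings' Theorem~2, so your argument has to stand on its own, and as written it has a genuine gap at its very first move in Claim~(2). You ``fix an increasing cofinal sequence $\seq{a^n_\xi}{\xi<\omega_1}$ in $\aleph_n^V$'' in $W$, which amounts to assuming $\cf^W(\aleph_n^V)=\omega_1$. The hypothesis only gives $|\aleph_n^V|^W=\aleph_1$, so $\cf^W(\aleph_n^V)$ may well be $\omega$, and in that case no such sequence exists: a non-decreasing $\omega_1$-sequence whose range is cofinal in an ordinal of cofinality $\omega$ would have to attain its supremum (choose $\xi_k$ with $a^n_{\xi_k}$ above the $k$-th member of an $\omega$-cofinal sequence and look at $\xi=\sup_k\xi_k$). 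Dropping monotonicity does not help, because the step ``$\xi^\ast=\sup_n\xi_n$ gives $f_\alpha< g_{\xi^\ast}$'' uses exactly that $a^n_{\xi^\ast}\ge a^n_{\xi_n}$. This is not a corner case one may wave away: in the paper's own application the outer model comes from the Namba forcing $\bL$ of \autoref{laver-namba-def-rep}, which gives every $\aleph_n^V$ ($n\ge 2$) cofinality $\omega$ while making its cardinality $\aleph_1$, so your construction cannot even start in precisely the situation the Fact is invoked for. Note also that producing an $\omega_1$-family $\{g_\xi\}$ that dominates every $f_\alpha$ while each $g_\xi$ is caught below a scale member is literally equivalent to the conclusion $\cf^W(\aleph_{\omega+1}^V)\le\aleph_1$, so it cannot be obtained from cardinality hypotheses alone; the good scale must enter at this point, not afterwards.

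There is a second, smaller gap, which you flag yourself: the ``delicate step'' of choosing $\alpha^\ast_\xi$ with $e_{\alpha^\ast_\xi}(n)>g_\xi(n)$ at every (or all but boundedly many, uniformly chosen) $n$. The $e_\alpha$'s are only $\le^\ast$-increasing in $\alpha$, with exception sets depending on the pair, so taking $\alpha^\ast_\xi$ above $\sup_n\beta^\xi_n$ does not transfer the inequality $e_{\beta^\xi_n}(n)>g_\xi(n)$ to $e_{\alpha^\ast_\xi}$ at the coordinate $n$ itself; this is the same uniformization difficulty as the main gap, and your sketch does not resolve it. The mechanism that actually makes such arguments work (both in Cummings' theorem and in this paper's use of the Fact) is an upper bound whose coordinates have cofinality $\omega_1$ in $V$ and hence in $W$ (since $|\aleph_1^V|^W=\aleph_1$ forces $\omega_1^V=\omega_1^W$): one diagonalizes with $\omega_1$-sequences inside the coordinates of such a bound, not inside the $\aleph_n^V$'s themselves. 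Compare \autoref{bounding}, where the generic $b_{\textup{\textsf{prod}}}$ is an exact upper bound of the whole scale with $\cof(\omega_1)$ coordinates and exactly this diagonalization goes through; in the general setting of the Fact, the work is to extract an adequate substitute from goodness via \autoref{goodness-characterized}, and that is the part your proposal is missing.
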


\begin{proof}[Proof of \autoref{baumgartner-claim}] First we make some observations. Consider the tree $T$ consisting of elements of the form
\[
\seq{Z_i}{i<j} \in H(\nu)^{V[g]} \cap (H(\nu)^{V[g]})^{<\omega_1}
\]
defined without the restriction to $D$. By \autoref{mini-approx-thm-laver}, we know that $V[g][k] \models \textup{``}\cf(\nu)=|\nu|=|({}^{<\omega_1}\nu)^{V[g]}| = \aleph_1\textup{''}$, so $T$ has cardinality $\aleph_1$ in $V[g][k]$. Also by \autoref{mini-approx-thm-laver}, it follows that under the direct extension ordering, there are no branches $b$ of $T$ such that for all $\delta<\nu$, there is some $\seq{M_i}{i<j} \in b$ with $\bigcup_{i<j}\sup(M \cap \nu) \ge \delta$. It is immediate from $\ot(D)=\omega_1$ that $T_\textup{IA}$ has height no greater than $\omega_1$.

Now we consider two cases. The first is that the height of $T_{\textup{IA}}$ is equal to some $\gamma<\omega_1$ in $V[g][k]$. Suppose for contradiction that $\seq{M_i}{i<\omega_1}$ is a sequence as in the statement of the claim. Let $E:=\seq{\beta_i}{i<\omega_1}$ enumerate $\{\bigcup_{j<i}\sup(M_j \cap \nu):i<\omega_1\}$. By continuity, $E$ is a club, so $D \cap E$ is a club in $\nu$. Choose some $\delta \in D \cap E$ such that $\ot(D \cap E \cap \delta) > \gamma$. If $\delta = \beta_i$, and $j^*$ is such that $\bigcup_{i<j^*}(M_i \cap \nu) \in D$. Then $\seq{M_i}{i<j^*}$ has at least $\delta$-many predecessors in $T_{\textup{IA}}$, which is a contradiction. 

Now suppose that $T_{\textup{IA}}$ has height $\omega_1$ in $V[g][k]$. We argue that in $V[g][k]$, $T_\textup{IA}$ is a wide Aronszajn tree of cardinality and height $\omega_1$, in particular that $T_{\textup{IA}}$ has no cofinal branches in $V[g][k]$. By the observation stated in the first paragraph about branches that are unbounded in $\nu$, it is sufficient to consider the possibility of a branch $b$ of height $\omega_1$ such that
\[
\exists \beta<\nu, \forall j<\omega_1,\seq{M_i}{i<j} \in b \then \bigcup_{i<j}(M_i \cap \nu) < \beta.
\]
Let $\seq{\alpha_i}{i<\omega_1}$ be an increasing enumeration of $D$ and let $i^*$ be minimal such that $\alpha_{i^*} \ge \beta$. But for all $\seq{M_i}{i<j},\seq{M_i}{i<j+1} \in b$, there must be an element in $D$ which is in $M_{j+1} \setminus M_j$. Therefore this is impossible since there are countably many elements of $D$ below $\alpha_{i^*}$.

To finish the claim, suppose for contradiction $\seq{M_i}{i<\omega_1}$ is a sequence as in the statement. Let $E$ be as defined in the first case. Let $\seq{\gamma_i}{i<\omega_1}$ enumerate $D \cap E$ and let $\vec{M}_i=\seq{M_j}{j<\xi(i)}$ be the corresponding elements of $T_{\textup{IA}}$. Then each $\vec{M}_i$ belongs to a level of height $\ge i$ within $T_{\textup{IA}}$. This contradicts the facts $\omega_1$ is preserved and that the generic function added by $f$ can only take countably many values.\end{proof}

We start in a ground model $V$ in which $\kappa$ is a $\kappa^{+\omega+1}$-supercompact cardinal. The preparation is defined as follows: Fix a Laver supercompact guessing function $\ell:\kappa \to V_\kappa$ such that for every $x$ and $\lambda \ge |\tc(x)|$ up to $\kappa^{+\omega+1}$, there is a $\lambda$-supercompact embedding $j:V \to M$ with critical point $\kappa_0$ such that $j(\ell)(\kappa) = x$ \cite{Laver1978}.

We define a revised countable support (see \cite{Foreman-Magidor-Shelah1989}) iteration $\mathbb{I} = \seq{\mathbb{I}_\alpha,\dot{\mathbb{J}}_\alpha}{\alpha<\kappa}$ as follows:

\begin{enumerate}

\item Suppose $\alpha$ is inaccessible and that $\ell(\alpha)$ is an $\mathbb{I}_\alpha$-name for a poset of the form
\[
\dot{\S} \ast \dot{\G} \ast \dot{\mathbb L} \ast \dot{\B}(T_\textup{IA}) \ast \dot{\Col}(\aleph_1,\chi)
\]
where $T_\textup{IA}$ indicates the wide Aronszajn tree discussed above and $\chi$ is some regular cardinal. If $T_\textup{IA}$ has height $\omega_1$, then let $\dot{\mathbb{J}}_\alpha = \ell(\alpha)$. If $T_\textup{IA}$ has height less than $\omega_1$, let $\dot{\mathbb{J}}_\alpha= \dot{\S} \ast \dot{\G} \ast \dot{\mathbb L} \ast \dot{\Col}(\aleph_1,\chi)$.

\item Suppose $\alpha$ is inaccessible and $\ell(\alpha)$ is an $\mathbb{I}_\alpha$-name for a poset of the form
\[
\S \ast \dot{\G} \ast \dot{\Add}(\alpha^{+\omega+1}) \ast  \dot{\mathbb{L}}.
\]
 Then let $\dot{\mathbb{J}}_\alpha = \ell(\alpha)$.

\item If $\alpha$ is inaccessible and $\ell(\alpha)$ is an $\mathbb{I}_\alpha$-name for $\Col(\aleph_1,\chi)$ for some $\chi<\kappa$, then let $\dot{\mathbb{J}}_\alpha$ be a name for $\Col(\aleph_1,\chi)$.

\item Otherwise $\dot{\mathbb{J}}_\alpha$ is a name for the trivial poset.

\end{enumerate}

\begin{proposition} If $G$ is $\mathbb{I}$-generic over $V$, then $V[G] \models \textup{``}\kappa = \aleph_2$ and all posets of cardinality $\le \kappa^{+\omega+1}$ that preserve stationary subsets of $\omega_1$ are semiproper$\textup{''}$.\end{proposition}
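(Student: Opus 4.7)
The plan is to adapt the Foreman-Magidor-Shelah construction of a semiproper forcing axiom model, restricted here to posets of cardinality $\le \kappa^{+\omega+1}$ because of the bound on the supercompactness hypothesis. The two things to verify are: (i) $\kappa = \aleph_2^{V[G]}$, and (ii) every stationary-preserving poset in $V[G]$ of cardinality at most $\kappa^{+\omega+1}$ is semiproper there.

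For (i), I would first check that every factor $\dot{\mathbb{J}}_\alpha$ is semiproper in the preceding extension: the Namba factor $\bL$ preserves stationary subsets of $\omega_1$ by \autoref{namba-omega1-pres}, the good-scale factor $\G$ is $\kappa^+$-directed closed by \autoref{gs-poset-directed}, the Baumgartner specializing forcing is ccc, $\Add(\alpha^{+\omega+1})$ is countably closed, and the Levy collapses $\Col(\aleph_1,\chi)$ are countably closed. Hence the revised countable support iteration $\mathbb{I}$ is semiproper and preserves $\aleph_1$. Since $|\mathbb{I}|=\kappa$ and $\kappa$ is inaccessible in $V$, $\kappa$ remains a cardinal; the Laver guessing property of $\ell$ ensures that $\Col(\aleph_1,\chi)$ appears as a factor at unboundedly many stages for every $\chi < \kappa$, so each cardinal of $V$ below $\kappa$ is collapsed to $\aleph_1$, whence $\kappa = \aleph_2^{V[G]}$.

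For (ii), let $\dot{\mathbb{Q}} \in V[G]$ name a stationary-preserving poset of cardinality $\le \kappa^{+\omega+1}$. I would invoke the guessing property of $\ell$ to find a $\kappa^{+\omega+1}$-supercompact embedding $j: V \to M$ with critical point $\kappa$ such that $j(\ell)(\kappa)$ codes a name matching Clause (1) of the iteration, whose initial segment corresponds (up to isomorphism in $V[G]$) to $\dot{\mathbb{Q}}$ followed by a tail Levy collapse, so that the $\kappa$-th factor of $j(\mathbb{I})$ subsumes $\dot{\mathbb{Q}}$. The standard lifting argument then produces a generic $G^* \supseteq G$ for $j(\mathbb{I})$ over $M$ and an extension $j^*: V[G] \to M[G^*]$, exploiting the closure of $M$ under $\kappa^{+\omega+1}$-sequences to build master conditions through the tail of $j(\mathbb{I})$. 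Given any countable $N \prec H(\theta)^{V[G]}$ with $\dot{\mathbb{Q}}, q \in N$, the master condition for $\dot{\mathbb{Q}}$ sitting in $G^*$ descends to a semigeneric condition $q^* \le q$ in $\dot{\mathbb{Q}}$, establishing semiproperness.

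The main obstacle is the lifting construction in (ii), particularly verifying that the generic $G^*$ matches $j''G$ through $\kappa$ and that a master condition for $\dot{\mathbb{Q}}$ can be chosen semigeneric over the given countable $N$. This requires the usual careful combination of revised countable support bookkeeping with $\kappa^{+\omega+1}$-closure of $M$ and the strategic closure of $\G$ to handle the tail factors. For the technical core of the lifting argument I would cite the methods of \cite{Foreman-Magidor-Shelah1989} rather than reproduce the construction, since nothing in our modification of the iteration affects the underlying template.
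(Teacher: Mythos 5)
There is a genuine gap, and it concerns the heart of the statement. Your clause (ii) tries to run the Foreman--Magidor--Shelah lifting directly for an arbitrary stationary-preserving $\dot{\mathbb{Q}}$, arranging via the Laver function that ``the $\kappa$-th factor of $j(\mathbb{I})$ subsumes $\dot{\mathbb{Q}}$.'' But $\mathbb{I}$ is not the $\MM$ iteration: by its definition, the iterand at a stage $\alpha$ is nontrivial only when $\ell(\alpha)$ names a poset of one of the three prescribed shapes ($\dot{\S}\ast\dot{\G}\ast\dot{\bL}\ast\dot{\B}(T_\textup{IA})\ast\dot{\Col}(\aleph_1,\chi)$, the $\dot{\Add}$-variant, or a collapse), and otherwise the iterand is trivial. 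An arbitrary stationary-preserving $\dot{\mathbb{Q}}$ (say a ccc poset) is not of any of these forms and is not isomorphic to a poset having $\bL$ as a factor, so no choice of embedding makes $\dot{\mathbb{Q}}$ (or an initial segment containing it) the stage-$\kappa$ iterand of $j(\mathbb{I})$; the master-condition argument you sketch never gets off the ground for this particular iteration. The intended route is indirect: one quotes the Foreman--Magidor--Shelah lemma that a stationary-set-preserving poset of cardinality $\le\lambda$ is semiproper once the relevant L\'evy collapse is semiproper, and the $\dot{\Col}(\aleph_1,\chi)$ tails of the Case (1) iterands (together with the supercompact reflection, which is why only posets of size $\le\kappa^{+\omega+1}$ are covered) are exactly what supplies that hypothesis in $V[G]$. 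So the semiproperness clause is reduced to semiproperness of collapses, not proved poset-by-poset by absorbing $\dot{\mathbb{Q}}$ into $j(\mathbb{I})$.

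A second, related error occurs in (i): you infer that the Namba-containing iterands are semiproper because $\bL$ preserves stationary subsets of $\omega_1$. Stationary preservation does not imply semiproperness, and Namba forcing is in general not semiproper; this is precisely the subtlety the construction is designed around. Semiproperness of those iterands at the stages where they are used comes from reflection principles (e.g.\ $\WRP$, or the statement of this very proposition at earlier stages) that have been forced by the preceding collapse iterands, not from stationary preservation alone, and without semiproperness of the iterands the revised countable support iteration theorem does not apply. Finally, a minor point: ``$|\mathbb{I}|=\kappa$ and $\kappa$ inaccessible, hence $\kappa$ remains a cardinal'' is not a valid inference (a poset of size $\kappa$ can collapse $\kappa$); what one uses, as in the paper, is that the iteration has the $\kappa$-chain condition, while the collapses guessed cofinally often below $\kappa$ give $\kappa\le\aleph_2$.
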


\begin{proof} By standard arguments, $\mathcal{I}$ has the $\kappa$-chain condition and thus preserves regularity of $\kappa$. Because of the iterands for Case (2), there are surjections from $\aleph_1$ to $\chi$ for all $\chi<\kappa$, hence $\kappa$ cannot be any larger than $\aleph_2$ in the extension. The collapse iterands in (1) ensure the statement about semiproperness by a lemma of Foreman, Magidor, and Shelah \cite[Lemma 3]{Foreman-Magidor-Shelah1989}.\end{proof}

Let $G_{\mathbb{I}}$ be $\mathbb{I}$-generic over $V$, let $G_\S$ be $\S$-generic over $V[G_{\mathbb{I}}]$, and let $G_\G$ be $\G$-generic over $V[G_{\mathbb{I}}][G_\S]$. Then $V[G]:=V[G_\mathbb{I}][G_\S][G_\G]$ will be the model witnessing \autoref{maintheorem}.

\begin{proposition} The following are true in $V[G]$:

\begin{enumerate}
\item $\forall \tau > \aleph_1$, $2^\tau = \tau^+$,
\item For all $n<\omega$, $\square_{\aleph_n}$ holds,
\item All scales on $\aleph_\omega$ are good.
\end{enumerate}
\end{proposition}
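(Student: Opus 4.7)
The plan is to verify the three clauses in order, relying on the size and closure properties already established for $\S$ and $\mathbb{G}$ together with the structural properties of $\mathbb{I}$. For clause~\emph{(1)}, the argument will be a routine cardinal arithmetic computation. The iteration $\mathbb{I}$ is designed in the standard Laver-style fashion so that in $V[G_\mathbb{I}]$ one has $\kappa = \aleph_2$ and $2^\tau = \tau^+$ for every cardinal $\tau \ge \kappa$. The product $\S$ has size at most $\aleph_{\omega+1}$ and is strategically closed enough at each level to preserve all cardinals and the GCH pattern above $\aleph_1$. Finally, $\mathbb{G}$ is $(\aleph_\omega + 1)$-strategically closed of size $\aleph_{\omega+1}$, so it adds no new bounded subsets of $\aleph_\omega$ and leaves $2^\tau = \tau^+$ for $\tau \ge \aleph_\omega$ intact.

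For clause~\emph{(2)}, fix $n<\omega$. The factor $\S_{\aleph_n}$ adds a $\square_{\aleph_n}$-sequence $\vec C^n = \seq{C^n_\alpha}{\alpha < \aleph_{n+1}}$ in $V[G_\mathbb{I}][G_\S]$. I will verify that $\vec C^n$ remains a $\square_{\aleph_n}$-sequence in $V[G]$. Coherence is absolute once the underlying sets are fixed, so one only has to confirm that each $C^n_\alpha$ stays club in $\alpha$ and that the order-type bound $\ot(C^n_\alpha) \le \aleph_n$ persists. Both follow from strategic closure: the other $\S_{\aleph_m}$-factors preserve $\aleph_{n+1}$ and all cardinals and cofinalities $\le \aleph_{n+1}$ by a standard product factorization, and $\mathbb{G}$ is $(\aleph_\omega+1)$-strategically closed and therefore preserves everything up to $\aleph_\omega$.

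For clause~\emph{(3)}, the generic filter $G_\mathbb{G}$ produces a scale $\vec f^* = \bigcup G_\mathbb{G}$ on $\prod_{n<\omega}\aleph_n$. By clause~(3) of the definition of $\mathbb{G}$, every $\alpha < \aleph_{\omega+1}$ with $\cf(\alpha) > \omega$ is a good point of $\vec f^*$, so $\vec f^*$ is already a good scale in $V[G]$; no further forcing occurs beyond $\mathbb{G}$. To upgrade this to \emph{all} scales being good, I will invoke the standard PCF-theoretic fact that for two scales $\vec g_0, \vec g_1$ on possibly different products cofinal in a singular $\mu$, the set of points of cofinality $>\cf(\mu)$ on which the two disagree about goodness is nonstationary. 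This follows from the uniqueness of exact upper bounds (\autoref{unique-eubs}) together with the characterization \autoref{goodness-characterized}; see \cite{Handbook-Abraham-Magidor}. Applied with $\vec f^*$ as the reference, every scale on $\aleph_\omega$ in $V[G]$ has a club of good points and is therefore good.

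The principal technical burden lies in clause~\emph{(2)}: one must factor the combined forcing $\S * \dot{\mathbb{G}}$ over the single $\S_{\aleph_n}$-factor and verify that the tail preserves the cardinals and cofinalities relevant to $\vec C^n$. Strategic closure of each tail handles this, but the bookkeeping is the only nontrivial step.
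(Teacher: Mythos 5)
Your treatment of clauses \emph{(1)} and \emph{(2)} is fine and close in spirit to the paper (the paper simply assumes $\GCH$ above $\kappa$ without loss of generality, notes $|\mathbb{I}|=\kappa$, and leaves \emph{(2)} to cardinal preservation plus the absoluteness of the clauses defining a $\square_{\aleph_n}$-sequence). The problem is in clause \emph{(3)}. The ``standard PCF-theoretic fact'' you invoke --- that two scales on \emph{possibly different} products cofinal in $\aleph_\omega$ agree about goodness at points of uncountable cofinality modulo a club --- is not a theorem. Uniqueness of exact upper bounds (\autoref{unique-eubs}) only compares upper bounds of the \emph{same} $<^*$-increasing sequence inside a \emph{single} product, so it cannot be used to relate an eub of $\vec f^*\rest\alpha$ (living in $\prod_n\aleph_n$) to an eub of $\vec g\rest\alpha$ when $\vec g$ lives in a different product; there is no interleaving between the two initial segments to transfer the characterization of \autoref{goodness-characterized}. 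Indeed the statement is consistently false: goodness is product-dependent, and there are models (e.g.\ the Gitik--Sharon model, as analyzed by Cummings and Foreman) containing simultaneously a very good scale and a bad scale on the same singular cardinal. If your ``fact'' were true, the existence of a single good scale would already imply that all scales are good, and the whole point of forcing with $\G$ on the \emph{full} product $\prod_{n<\omega}\aleph_n$ would disappear.

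What does work --- and is the paper's one-line argument --- is a one-directional transfer \emph{downward} from the full product. Any scale on $\aleph_\omega$ is carried by a product $\prod_{k<\omega}\aleph_{n_k}$ whose terms form a subsequence of the $\aleph_n$'s. Restricting the generic good scale $\vec f^*$ to those coordinates yields a scale on that sub-product, and a goodness witness $(A,j)$ for $\alpha$ with respect to $\vec f^*$ restricts coordinatewise, so the restricted scale is still good. Now the two scales live on the \emph{same} product, and there the canonicity argument you had in mind is valid: on a club of $\alpha$ the initial segments are mutually cofinal in $<^*$, so by \autoref{unique-eubs} and \autoref{goodness-characterized} they have the same good points of uncountable cofinality modulo clubs. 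Replacing your cross-product claim by this restriction step closes the gap and recovers the paper's proof.
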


\begin{proof} Without loss of generality, we can assume that $\GCH$ holds above $\kappa$. Then this is preserved by $\mathbb{I}$ since it has cardinality $\kappa$. \autoref{g-adds-good-scale} gives us the third point because if there is a good scale on the full product $\prod_{n<\omega}\aleph_n$, then restrictions to sub-products are automatically good.\end{proof}

\subsection{The Lifting Argument}

Most of the work here consists of the following:

\begin{lemma} In $V[G]$, there are stationarily-many $N \prec H(\aleph_{\omega+1})$ of cardinality $\aleph_1$ that are not sup-internally approachable.\end{lemma}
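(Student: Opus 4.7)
The plan is to carry out a standard Laver-style supercompactness lifting and then invoke \autoref{baumgartner-claim} to show that the generically constructed ``image'' structure $N^*$ is not sup-internally approachable, then reflect back to $V[G]$ by elementarity. The whole iteration $\mathbb{I}$ was tailor-made so that at stage $\kappa$ of any image iteration $j(\mathbb{I})$ the iterand is the master forcing $\dot{\S} \ast \dot{\G} \ast \dot{\bL} \ast \dot{\B}(T_{\textup{IA}}) \ast \dot{\Col}(\aleph_1,\aleph_{\omega+1}^{V[G]})$, which both specializes $T_{\textup{IA}}$ and collapses $\aleph_{\omega+1}^{V[G]}$ to size $\aleph_1$.

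First I would fix an arbitrary algebra $F$ on $H(\aleph_{\omega+1})^{V[G]}$ in $V[G]$ and seek an $F$-closed $N\prec H(\aleph_{\omega+1})^{V[G]}$ of size $\aleph_1$ which is not sup-internally approachable; by arbitrariness of $F$, this produces stationarily many. Via the Laver function $\ell$, pick in $V$ a $\kappa^{+\omega+1}$-supercompact embedding $j\colon V\to M$ with $\crit(j)=\kappa$ such that $j(\ell)(\kappa)$ is the prescribed name. The lift proceeds in three standard stages: (a) through $\mathbb{I}$ by a tail-generic construction inside an auxiliary extension, using that the tail of $j(\mathbb{I})$ past stage $\kappa$ is sufficiently closed in $M[H_\mathbb{I}]$; (b) through $\S\ast\dot{\G}$ by a master-condition argument below $\hat j[G_\S\times G_\G]$, made possible by $\aleph_1$-directed closure of $\G$ (\autoref{gs-poset-directed}) combined with strategic closure of $\S$ and the $\aleph_{\omega+1}$-closure of $M$ in $V$; (c) then force $\bL$, $\B(T_{\textup{IA}})$, and $\Col(\aleph_1,\aleph_{\omega+1}^{V[G]})$ over the resulting $M^*$ to land in a generic extension $W$ in which $\hat j\colon V[G]\to M^*$ is defined.

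In $W$ set $N^*:=\hat j[H(\aleph_{\omega+1})^{V[G]}]$. Then $N^*\prec H(\hat j(\aleph_{\omega+1}^{V[G]}))^{M^*}$ via the induced isomorphism, and $|N^*|^W=\aleph_1$ thanks to the $\Col$ factor. Since $\hat j(F)(\hat j[a])=\hat j(F(a))$ for $a\in[H(\aleph_{\omega+1})^{V[G]}]^{<\omega}$, $N^*$ is $\hat j(F)$-closed, so it lies in $\hat j(\mathcal{C}_F)$. The heart of the argument is that $N^*$ is not sup-internally approachable at $\hat j(\aleph_{\omega+1}^{V[G]})=\aleph_{\omega+1}^{M^*}$ in $W$. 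Assume a witness $\seq{N_i}{i<\omega_1}\in W$ exists. Let $Z_i:=\bigcup_{k<i}N_k$; then $\seq{Z_i}{i<\omega_1}$ is a $\subseteq$-continuous chain of countable sets, each proper initial segment $\seq{Z_i}{i<j}$ sits inside $N^*\subseteq M^*$ (since each $\seq{N_k}{k<j}\in N^*$ and taking unions is definable), and $\sup_i\sup(Z_i\cap \aleph_{\omega+1}^{M^*})=\sup(N^*\cap \aleph_{\omega+1}^{M^*})=:\delta$. Arranging in advance that $\nu$ and $D$ in the construction of $T_{\textup{IA}}$ are chosen to have $\delta$ as a limit point of $D$ (so $D\cap\delta$ is cofinal in $\delta$), we can thin $\seq{Z_i}{i<\omega_1}$ along $D\cap\delta$ to a cofinal branch of $T_{\textup{IA}}$ in $W$, contradicting \autoref{baumgartner-claim}, which says $\B(T_{\textup{IA}})$ has killed all such branches while preserving $\omega_1$.

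Finally, elementarity of $\hat j$ transfers the statement ``$\mathcal{C}_F$ contains a size-$\aleph_1$ member that is not sup-internally approachable at $\aleph_{\omega+1}^{V[G]}$'' from the lifted world down to $V[G]$. The main obstacle, as already indicated, will be aligning the SIA witness with a branch of $T_{\textup{IA}}$: \autoref{baumgartner-claim} kills only those continuous chains whose union exhausts $H(\nu)^{V[g]}$, whereas SIA provides chains with sups merely reaching the submodel-sup $\delta<\aleph_{\omega+1}^{M^*}$, so care is needed in choosing $\nu$ and $D$ and in the relativization used to define $T_{\textup{IA}}$ inside the iterand so that branches ruled out by the specialization coincide with those arising from potential SIA witnesses for $N^*$.
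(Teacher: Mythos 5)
There is a genuine gap at the heart of your contradiction step, and it is exactly the point you flag at the end as ``care is needed.'' The tree $T_{\textup{IA}}$ is built from chains of elements of $H(\nu)^{V[g]}$ whose sups land in a club $D \subseteq \nu$, where $\nu = \aleph_{\omega+1}^{V[G]}$; both $D$ and $T_{\textup{IA}}$ are fixed on the $V$-side of the construction, before any embedding is applied. Your SIA witness for $N^* = \hat{j}[H(\nu)^{V[G]}]$, however, lives at the level of $\hat{j}(\nu)$: its sets are countable subsets of $N^* \subseteq H(\hat{j}(\nu))^{M^*}$ and its sups converge to $\delta = \sup \hat{j}[\nu] > \nu$. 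Such a chain is not even a candidate branch of $T_{\textup{IA}}$, and the proposed repair --- choosing $\nu$ and $D$ ``in advance'' so that $\delta$ is a limit point of $D$ --- is impossible, since $D \subseteq \nu < \delta$ and $\delta$ depends on $j$ while $D$ is chosen inside the iterand. The missing idea is a pullback through the embedding: because every proper initial segment of the witness is an \emph{element} of $\hat{j}[N_0]$, it has the form $\hat{j}(Y_i)$ with $Y_i \in H(\nu)^{V[G]}$, and taking $f = \bigcup_i Y_i$ (or the associated function of sups) produces, in the lifted extension, a function $f:\omega_1 \to \nu$ unbounded in $\nu$ all of whose countable initial segments lie in $V[G]$. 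It is \emph{this} object that is ruled out: by the approximation theorem for $\bL$ followed by a countably closed forcing (\autoref{mini-approx-thm-laver}), together with the specialization of $T_{\textup{IA}}$, which serves only to make that approximation property robust under the remaining (not necessarily countably closed) forcing needed to complete the lift --- this is Condition (3) of \autoref{lifting-claim} and the final clause of \autoref{contradiction-claim}. Without the pullback, \autoref{baumgartner-claim} simply does not see your chain.

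A secondary but substantive gap is in stage (b) of your lifting sketch. The master condition for $j(\G)$ cannot come from the $\aleph_1$-directed closure of $\G$ (\autoref{gs-poset-directed}): here $\cf(\lambda)=\omega$, so the closure is only countable, while $j[G_\G]$ has size $\nu$. One must check that $\bigcup j[G_\G]$ extends to a condition, i.e.\ that $\rho = \sup j[\nu]$ is a \emph{good} point of $j(\vec f)$; this is where the Namba generic enters, via the exact upper bound $j(b_{\textsf{prod}})$ with values of cofinality $\omega_1$ (\autoref{bounding}, \autoref{semiproper-bounding-claim}). This interaction between $\bL$ and $\G$ is the crux of the construction and cannot be absorbed into a routine closure/master-condition remark. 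Your overall architecture (lift, take $N^* = \hat{j}[H(\nu)^{V[G]}]$, catch it in $j$ of an arbitrary club, reflect by elementarity) matches the paper, but as written the two key steps --- the goodness of $\rho$ and the translation of the SIA witness down to $\nu$ --- are missing.
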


We will in fact phrase the result in terms of guessing models.

\begin{definition} Let $N$ be a set such that $\aleph_1 \subseteq N$. Assume $\tau$ and $\lambda$ are regular uncountable cardinals. Then we say that $N$ is \emph{weakly $(\tau,\lambda)$-guessing} if whenever $f:\tau \to \ON$ and $f \rest i \in N$ for cofinally many $i < \sup(N \cap \tau)$ and $f$ is unbounded in $\sup(N \cap \lambda)$, then there is some $g \in N$ such that $f \rest N = g$.\end{definition}

Observe that if $N$ is $(\omega_1,\lambda)$-weakly guessing for some $\lambda$, then we can take $f \in N$. Also, if $N$ is sufficiently elementary, the definition implies that there can be no such $f$.

\begin{proposition} Suppose $\lambda>\aleph_1$ and that $N$ with $\aleph_1=|N|=\cf(\aleph_1)$ is weakly $(\omega_1,\lambda)$-guessing. Then $N$ is not sup-internally approachable at $\lambda$.\end{proposition}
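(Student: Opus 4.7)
The plan is to argue by contradiction: assume $N$ is sup-internally approachable at $\lambda$ via some witness $\langle M_i : i < \omega_1 \rangle$, and extract a single function that the guessing hypothesis pulls into $N$ but that cannot in fact live there. The natural candidate is $f \colon \omega_1 \to \lambda$ defined by $f(i) = \sup(M_i \cap \lambda)$. Each value lies strictly below $\lambda$ because $|M_i| \le \aleph_0 < \cf(\lambda)$, so $f$ is a bona fide function into $\lambda$.

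First I would verify the hypotheses of weak $(\omega_1,\lambda)$-guessing for $f$. Clause (1) of sup-internal approachability supplies $\langle M_j : j<i \rangle \in N$ for each $i<\omega_1$; together with the tacit elementarity of $N$ that holds in the intended application to $N \prec H(\aleph_{\omega+1})$, this yields $f \rest i \in N$ for all such $i$, since $f \rest i$ is definable from $\langle M_j : j<i\rangle$ and $\lambda$. Clause (2) of sup-internal approachability says $\sup_{i<\omega_1}\sup(M_i \cap \lambda) = \sup(N \cap \lambda)$, which is precisely the statement that $f$ is unbounded in $\sup(N \cap \lambda)$. Because $\aleph_1 \subseteq N$ forces $\omega_1 \cap N = \omega_1$, the restriction $f \rest N$ is just $f$ itself, so weak guessing then delivers $f \in N$.

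Next I would derive a contradiction from $f \in N$: by elementarity the ordinal $\sup(\range f)$ also lies in $N$, but $\sup(\range f) = \sup(N \cap \lambda)$ by Clause (2), and this value is strictly below $\lambda$ because $|N \cap \lambda| = \aleph_1 < \cf(\lambda) = \lambda$. Hence $\sup(N \cap \lambda) \in N \cap \lambda$, and then $\sup(N \cap \lambda) + 1 \in N \cap \lambda$ by elementarity, contradicting the very definition of supremum. This mirrors the parenthetical remark following the definition of weak guessing, that for sufficiently elementary $N$ no $f$ of this kind can exist.

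The only subtle point to watch is the role of elementarity, which is not part of the bare statement of the proposition but is used in two places: to pass from $\langle M_j : j<i\rangle \in N$ to $f \rest i \in N$, and to pass from $f \in N$ to $\sup(\range f) \in N$. In the ambient use case (where $N \prec H(\aleph_{\omega+1})$) this is automatic, but the write-up should make the appeals to elementarity explicit so that the hypotheses of the proposition do not appear to be too weak. Once that is pinned down, the proof is essentially a two-line application of the guessing property to the sup-function of the approachability chain.
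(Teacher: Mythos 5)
Your proof is correct and follows essentially the same route as the paper's: both consider the function $i \mapsto \sup(M_i \cap \lambda)$, note that its initial segments lie in $N$ and that it is unbounded in $\sup(N \cap \lambda)$, and then conclude via the guessing property that no such function can exist. You merely make explicit the elementarity steps (getting $f \rest i \in N$ and deriving the final contradiction from $f \in N$) that the paper leaves to its remark following the definition of weak guessing.
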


\begin{proof} Suppose that $N$ is sup-internally approachable via $\seq{M_i}{i<\omega_1}$. Let $\delta = \sup(N \cap \lambda)$. Let $A = \seq{\sup(M_i \cap \lambda)}{i<\omega_1}$. Then $N$ has all initial segments of $A$. Since $A$ is unbounded in $\delta$, it follows that $N$ cannot be weakly $(\omega_1,\kappa)$-guessing.\end{proof}

\begin{lemma}\label{guessing-lemma} In $V[G]$, for all $\theta \ge \aleph_{\omega+1}$, there are stationarily-many $N \prec H(\theta)$ of cardinality $\aleph_1$ such that:
\begin{enumerate}
\item $\cf(N \cap \aleph_{\omega+1})=\aleph_1$,
\item $N$ is weakly $(\omega_1,\aleph_{\omega+1})$-guessing.
\end{enumerate}\end{lemma}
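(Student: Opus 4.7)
The plan is to obtain $N$ via a lifted supercompact embedding, using the Laver preparation to arrange the needed auxiliary forcings at stage $\kappa$. Fix $\theta \ge \aleph_{\omega+1}$ in $V[G]$ and, in $V$, an embedding $j : V \to M$ witnessing $\theta^+$-supercompactness of $\kappa$ with $j[\theta^+] \in M$, chosen via the Laver function $\ell$ so that $j(\ell)(\kappa)$ is the $\mathbb{I}_\kappa$-name for $\dot{\S} \ast \dot{\G} \ast \dot{\mathbb L} \ast \dot{\B}(T_{\textup{IA}}) \ast \dot{\Col}(\aleph_1,\chi)$ for some $\chi > |H(\theta)^{V[G]}|$. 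Then $j(\mathbb{I})$ factors as $\mathbb I \ast \dot{\mathbb J}_\kappa \ast \dot{\mathbb I}_{\textup{tail}}$, where $\dot{\mathbb J}_\kappa = j(\ell)(\kappa)$ is the iterand prescribed by Clause (1) of the definition of $\mathbb I$.

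In $V[G]$, force successively with $\mathbb L$, $\B(T_{\textup{IA}})$, $\Col(\aleph_1,\chi)$, and $\mathbb I_{\textup{tail}}$ to produce a generic $G^*$ for $j(\mathbb I)$ over $V$ extending $G_{\mathbb I}$. Lift $j$ through the intervening factors $\S$ and $\G$ by master-condition arguments, using the $\kappa^+$-directed closure of $\mathbb G$ from \autoref{gs-poset-directed} together with analogous closure for $\S$, so that $j$ extends to $j : V[G] \to M[G^{**}]$ in an outer model $V[G][H]$. Set $N^* := j[H(\theta)^{V[G]}]$. Since $\Col(\aleph_1,\chi)$ collapses every ordinal below $\chi$ to have cardinality $\aleph_1$, we get $|N^*|^{V[G][H]} = \aleph_1$ and $\cf^{V[G][H]}(N^* \cap j(\aleph_{\omega+1})) = \aleph_1$, which yields clause (1).

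For clause (2), suppose toward a contradiction that $f : \omega_1 \to \ON$ in $V[G][H]$ witnesses failure of weak guessing for $N^*$: $f \rest i \in N^*$ cofinally in $\omega_1$, $f$ unbounded in $\sup(N^* \cap j(\aleph_{\omega+1}))$, and no $g \in N^*$ with $f \rest N^* = g$. Pulling each $f \rest i$ back via $j$ and closing suitably produces an increasing continuous $\omega_1$-chain of countable subsets of $H(\nu)^{V[G_\S \ast G_\G]}$, with $\nu := \aleph_{\omega+1}^{V[G]}$, whose suprema land cofinally in the club $D$ from the definition of $T_{\textup{IA}}$. This traces a cofinal branch of $T_{\textup{IA}}$ in $V[G][H]$. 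But $G^*$ includes a $\B(T_{\textup{IA}})$-generic specializing $T_{\textup{IA}}$, and $\omega_1$ is preserved throughout $V[G][H]$ (by \autoref{namba-omega1-pres} for $\mathbb L$, ccc for $\B$, and countable closure for $\Col$ and $\mathbb I_{\textup{tail}}$), contradicting \autoref{baumgartner-claim}. Stationarity in $V[G]$ then follows by elementarity: for any $F : H(\theta)^{<\omega} \to H(\theta)$ in $V[G]$, $j(F) \in M[G^{**}]$ has $N^*$ closed under it, so by elementarity of the lifted $j$ there is some $N \prec H(\theta)^{V[G]}$ in $V[G]$ closed under $F$ satisfying (1) and (2).

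The main obstacle will be lifting $j$ through $\S$ and $\G$ compatibly with the already-fixed generic for $\dot{\mathbb J}_\kappa$; this is delicate because $\mathbb L$ is only defined in $V[G_\S \ast G_\G]$, so the master conditions for $\S$ and $\G$ must be chosen compatibly with the Namba generic. A secondary technical point is confirming that the guessing and cofinality properties for $N$ transfer back from $M[G^{**}]$ via the elementarity of the lift, which depends on clause (1) and the absoluteness of cofinality calculations under the $\omega_1$-preserving outer forcing.
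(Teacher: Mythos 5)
Your overall skeleton is the same as the paper's (Laver-guess the iterand $\dot\S \ast \dot\G \ast \dot\bL \ast \dot\B(T_\textup{IA}) \ast \dot\Col(\aleph_1,\chi)$ at stage $\kappa$ of $j(\mathbb I)$, force the remaining factors over $V[G]$, lift, take a pointwise $j$-image, derive guessing from the specialized tree plus $\omega_1$-preservation, and pull stationarity back by elementarity), but there is a genuine gap at the one step you dispose of in a clause: the lift of $j$ through $\G$. Appealing to ``the $\kappa^+$-directed closure of $\G$'' from \autoref{gs-poset-directed} cannot work: in that proposition the parameter is $\cf(\aleph_\omega)=\omega$, so $\G$ is only $\aleph_1$-directed closed, whereas $j[G_\G]$ has cardinality $\nu=\aleph_{\omega+1}^{V[G]}$; and since $\G$ is tree-like, the only candidate master condition is $\bar p=\bigcup_{p\in G_\G}j(p)$ capped by a top function at $\rho=\sup j[\nu]$. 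After $\bL$ and the collapse, $\cf(\rho)=\omega_1>\omega$, so the definition of $j(\G)$ requires $\rho$ to be a \emph{good point} of $j(\vec f)\rest\rho$, and this is not automatic -- it is exactly what \autoref{section-eubs} exists to prove. The paper verifies it by showing that $j(b_{\textup{\textsf{prod}}})$, the image of the product function read off the Namba generic (\autoref{generic_min_function}), is an exact upper bound of $\bar p$ whose values have cofinality $\omega_1$, via \autoref{cummings-magidor-repetitions}, \autoref{bounding} and \autoref{semiproper-bounding-claim}, and then applies \autoref{goodness-characterized}; this is the reason $\bL$ is Laver-style with stationary splitting concentrating on $\cof(\omega_1)$. (The $\S$-master condition is likewise not a closure fact but the standard top-club construction at $\sup j[\aleph_n^{V[G_{\mathbb I}]}]$, which needs that $\bL$ gives these points countable cofinality.) You sensed a delicacy about ``compatibility with the Namba generic,'' but the missing content is precisely this goodness-of-$\rho$ / exact-upper-bound argument; without it the master condition for $\G$ need not exist and the lift, hence your clauses (1) and (2), never gets off the ground.

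A secondary problem: you invoke a $\theta^+$-supercompact embedding for arbitrary $\theta\ge\aleph_{\omega+1}$, but \autoref{maintheorem} only assumes $\kappa^{+\omega+1}$-supercompactness, and the Laver function only guesses sets of hereditary size up to $\kappa^{+\omega+1}$. The paper instead fixes $N_0=H(\nu)^{V[G]}$, uses $M^{\nu}\subseteq M$ together with $H(\nu)[G]=H(\nu)^{V[G]}$ (\autoref{htheta-claim}, \autoref{containment-claim}) to see that $j[N_0]\in M[G][K]$, and obtains stationarity from clubs on $[H(\nu)]^{\aleph_1}$; your $j[H(\theta)^{V[G]}]$ for large $\theta$ is not even obviously an element of the target model under the available hypotheses. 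Your clause-(2) mechanism (pulled-back initial segments yield a cofinal branch of the specialized $T_\textup{IA}$, contradicting \autoref{baumgartner-claim} under $\omega_1$-preservation) is in substance the paper's route through Condition (3) of \autoref{lifting-claim}, so that part is fine once the lift actually exists.
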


\begin{proof}[Proof of \autoref{guessing-lemma}]

Let $\nu = \kappa^{+\omega+1} =  \aleph_{\omega+1}^{V[G]}$.

Use the properties of the Laver function to find an embedding $j:V \to M$ such that $j(\kappa)>\nu$, $M^{\nu} \subset M$, and $j(\ell)(\kappa)$ is the $\mathbb{I}$-name for
\[
\S \ast \dot{\G} \ast \dot{\mathbb{L}}  \ast  \dot{\B}(T_\textup{IA}) \ast \dot{\Col}(\aleph_1,\chi).
\]
We let $\rho:=\sup j[\nu]$.

First we will show that we can obtain a lift of $j:V \to M$ in a well-behaved forcing extension:

\begin{claim}\label{lifting-claim} There is a forcing extension $V[G][K] \supset V[G]$ such that:

\begin{enumerate}

\item  $j:V \to M$ can be lifted to $j:V[G] \to M[j(G)]=M[G][K]$,

\item $V[G][K] \models \textup{``}|\aleph_{\omega+1}^{V[G]}|=|\aleph_1^{V[G]}|=\aleph_1\textup{''}$,

\item In $M[G][K]$, if $f:\omega_1 \to \ON$ is not bounded in $\nu$, then there is some $i<\omega_1$ such that $f \rest i \notin V[G]$.

\end{enumerate}\end{claim}

This gives us the material to obtain the models:

\begin{claim}\label{contradiction-claim} If $N_0 = H(\nu)^{V[G]}$, then in $M[G][K]$, the following hold:
\begin{enumerate}
\item $|j[N_0]| = \aleph_1 \subseteq j[N_0]$,
\item $j[N_0] \prec j(N_0)$,
\item $\cf(j[N_0] \cap j(\nu)) = \omega_1$,
\item $j[N_0]$ is weakly $(\omega_1,\aleph_{\omega+1})$-guessing.
\end{enumerate}
\end{claim}

Now we can prove the claims.

\begin{proof}[Proof of \autoref{lifting-claim}] Let $j$ be as fixed at the beginning of the proof of \autoref{guessing-lemma}. We will lift $j$ in a series of steps in which we extend the range of $j$ to the next generic. We will establish conditions \emph{(1)} and \emph{(2)} in the first step and argue that it is preserved as long as $\aleph_1$ is preserved in the following steps.

\emph{Lifting to domain $V[G_{\mathbb{I}}]$:} By elementarity and Laver guessing and the fact that the iterands preserve stationary subsets of $\omega_1$ (\autoref{namba-omega1-pres} for the case of $\bL$) we have that
\[
\dot{\S} \ast \dot{\G} \ast \dot{\mathbb{L}} \ast \dot{\B}(T_\textup{IA}) \ast \dot{\Col}(\aleph_1,\chi)  
\]
is forced by $\mathbb{I}$ to preserve stationary subsets of $\omega_1$.

 Therefore, by the definition of the iteration and the output of the guessing function, we have
\[
j(\mathbb{I}) = \mathbb{I} \ast \dot{\S} \ast \dot{\G} \ast \dot{\mathbb{L}}  \ast \dot{\B}(T_\textup{IA}) \ast \dot{\E}
\]
where $\dot{\E}$ is a remainder term that includes the L{\'e}vy collapse component. (This is where we use $\kappa^{+\omega+1}=\nu$-supercompactness of the embedding.)

Let $K_\mathbb{L}$ be $\mathbb{L}$-generic over $V[G]$, let $K_{\mathbb{B}}$ be $\B(T_\textup{IA})$-generic over $V[G][K_\mathbb{\bL}]$,and let $K_{\E}$ be $\E$-generic over $V[G][K_{\mathbb{L}}][K_{\mathbb{B}}]$.

Then since $j" \mathbb{I} \subseteq \mathbb{I}$, Silver's classical lifting argument (see \cite{Handbook-Cummings}) gives us an embedding $j:V[G_{\mathbb{I}}] \to M[G_{\mathbb{I}} \ast G_\S \ast G_\G \ast K_{\mathbb{L}} \ast K_{\B} \ast K_{\E}] = M[j(G_{\mathbb{I}})]$.

The argument that Condition \emph{(2)} holds works as follows: The L{\'e}vy collapse iterand collapses $\nu$ to have cardinality and cofinality $\aleph_1$. Hence it is sufficient for Condition \emph{(2)} to demonstrate preservation of $\aleph_1$ in the next steps.

In $V[G][K_{\mathbb{L}}][K_{\mathbb{B}}]$, $T_\textup{IA}$ is special, and in particular still a wide $\aleph_1$-Aronszajn tree. We can assume that its height is $\omega_1$ since otherwise \autoref{baumgartner-claim} indicates a trivial case. By \autoref{baumgartner-claim}, a function with the properties included in Condition \emph{(3)} can be used to construct a chain that makes up a cofinal branch of $T_\textup{IA}$ (see e.g. \cite[Theorem 3.11]{Handbook-Eisworth}). Such a chain would collapse $\aleph_1$. Therefore, to ensure Condition \emph{(3)} it is enough to show that $\aleph_1$ is preserved as computed in terms of extensions of $M$.

\emph{Lifting to domain $V[G_{\mathbb{I}}][G_{\S}]$}: Let $\gamma_n:=\sup j[\aleph_n^{V[G_\mathbb{I}]}]$ for all $n<\omega$. Then $\cf(\gamma_n) = \omega$ in $V[j(G_{\mathbb{I}})]$. Now consider $\bigcup_{s \in G_\S}j(s)$. Taking a cofinal $\omega$-sequence $r_n \subseteq \gamma_n$ for each $n<\omega$, if we take $s^*(n) = \bigcup_{p \in G_\S}j(s(n))$, then we can inductively argue that $\bar{s}:=\seq{s^*(n) \cup \la \gamma_n,r_n \ra}{n<\omega}$ is a master condition for $j[G_\S]$ in $j(G_\S)$ since coherence for the condition at $\gamma_n$ is trivial because there are no limit points. (See \cite{Cummings-Foreman-Magidor2003} for more detail.)

Then $\bar{s} \in j(\S)$ is a master condition for $j[G_{\S}]$. Then we choose $K_{\S}$ to be a $j(\S)$-generic containing $p$.

\emph{Lifting to domain $V[G_{\mathbb{I}}][G_\S][G_{\G}]$:} We use another master condition argument. Let $\rho = \sup j[ \mu]$.  Let $\bar p = \bigcup_{p \in G_\G}j[p] = j[\vec f]$. Let $j(\vec f) = \seq{f^*_\alpha}{\alpha<j(\mu)}$.

We now argue that $\bar{p}$ can be extended to a condition in $j(\G)$. It is sufficient to argue that $M[j(G_{\mathbb{I}}) \ast j(G_{\mathbb{\S}})]$ models that $\bar{p}$ can be extended to a condition in $j(\G)$ since $j(\G) \in M[j(G_{\mathbb{I}}) \ast j(G_{\mathbb{\S}})]$. It is enough to argue from the perspective of $M[j(G_{\mathbb{I}} \ast G_{\S})]$. Specifically, we need to argue that $\rho$ is a good point of $j(\vec f)$, which then implies that $\bar{p} {}^\frown \langle \rho,h \rangle$ is a condition where $h$ is any exact upper bound of $\bar{p}$. Let $b_{\textup{\textsf{prod}}}$ be the restriction of the generic function added by $\bL$ denoted using the notation from \autoref{generic_min_function}. Enumerate $b_{\textup{\textsf{prod}}}$ as $\seq{b_n}{n<\omega}$. Abusing notation slightly, let $j(b_{\textup{\textsf{prod}}})$ denote the function $n \mapsto j(b_n)$. Note that even though $b_{\textup{\textsf{prod}}}$ is not in the domain of $j$, $M[j(G_{\mathbb{I}} \ast G_{\S})]$ contains $j \rest \kappa^{+n}$ for all $n<\omega$, and $b_{\textup{\textsf{prod}}} \in M[j(G_{\mathbb{I}} \ast G_{\S})]$, so $j(b_{\textup{\textsf{prod}}}) \in M[j(G_{\mathbb{I}} \ast G_{\S})]$. We will argue that the function $j(b_{\textup{\textsf{prod}}})$ is an exact upper bound of $\bar{p}$. This will be sufficient since the range of this function consists  points of cofinality $\aleph_1$ by the third point of \autoref{laver-namba-def-rep}. Therefore we can apply the definition of goodness from \autoref{goodness-characterized}.

\begin{claim}\label{semiproper-bounding-claim} Suppose that $h \in V[j(G_{\mathbb{I}} \ast G_{\S})]$ and that $h <^* j(b_{\textup{\textsf{prod}}})$. Then there is some $\alpha<\nu$ such that $M[j(G_{\mathbb{I}} \ast G_{\S})] \models \textup{``}h <^* f^*_{j(\alpha)}\textup{''}$ .
\end{claim}

\begin{proof} We use the fact that $\cf(b_{\textup{\textsf{prod}}}(n))=\aleph_1$ in $V[G_{\mathbb{I}} \ast G_{\S}]$. Fixing any $n<\omega$:, the image of $j$ is unbounded in $\cf(j(b_{\textup{\textsf{prod}}})(n))$, so we can assume that $h(n)$ is in the image of $j$ for all $n<\omega$. Let $h'$ be the function such that $j(h'(n)) = h(n)$ for all $n<\omega$. We have that $j(G_{\mathbb{I}} \ast G_{\S}) = G \ast K_{\mathbb{L}} \ast K'$ where $K'$ is a generic for a semiproper forcing in $V[G \ast K_{\bL}]$. In particular, semiproper forcings are strictly bounding for functions $\omega \to \omega_1$. The space $\{f:f<b_{\textup{\textsf{prod}}}\}$ is cofinally interleaved with an embedding of ${}^\omega \omega_1$. Therefore there is some $h'' \in V[G][K_{\mathbb{L}}]$ such that $h' < h''$. By \autoref{bounding}, there is some $\alpha < \aleph_{\omega+1}^{V[G]}$ such that $h'' <^* f_\alpha$. Suppose this is witnessed by $m<\omega$. Then for all $n \ge m$, we have that $j(f_\alpha(n)) = f^*_{j(\alpha)}(n) > j(h'(n)) = h(n)$. Hence $M[j(G_{\mathbb{I}} \ast G_{\S})] \models \textup{``}h <^* f^*_{j(\alpha)}\textup{''}$.\end{proof}

Applying \autoref{semiproper-bounding-claim}, we have that where $j(\alpha)<\rho$. Hence $\bar p {}^\frown \langle \rho,h \rangle$ is a master condition for $j"G_\G$. Let $K_\G$ be any generic for $j(\G)$ containing $\bar p$. Since $V[G_{\mathbb{I}}][G_{\S}] \models \textup{``}\G$ is countably closed$\textup{''}$, $M[j(G_{\mathbb{I}}][j(G_{\S})] \models \textup{``}j(\G)$ is countably closed$\textup{''}$ and therefore preserves $\aleph_1$, this completes the lifting argument.

This completes the steps of the lifting argument. Finally, we let $K = K_{\mathbb{L}} \ast K_{\mathbb{B}}  \ast K_{\E}  \ast K_\S \ast K_\G $.\end{proof}

Before starting with the proof of \autoref{contradiction-claim}, we establish some claims about the main object of interest generated by the lift. This effort allows us to speak of elementary submodels of $H(\aleph_{\omega+1})$ in particular.

\begin{lemma}\label{easton-htheta} Suppose that $\P \subseteq H(\aleph_{\omega+1})$, that $\P$ is countably closed, and that for arbtirarily high $n<\omega$, $\P \cong \P_A \times \P_B$ where $\P_A$ has cardinality $\aleph_n$ and $\P_B$ is $(\aleph_n+1)$-strategically closed. Then if $g$ is $\P$-generic, then $H(\aleph_{\omega+1})[g] = H(\aleph_{\omega+1})^{V[g]}$.\end{lemma}

\begin{proof} Note that if $\aleph_{\omega+1}$ is not preserved by $\P$, then $V[g]\models \textup{``}\cf(\aleph_{\omega+1}^V)=\aleph_n\textup{''}$. The factorization property of $\P$ therefore implies, using a variant of Easton's Lemma (see \cite[Remark 5.17]{Handbook-Cummings}), that $\aleph_{\omega+1}$ is preserved.

First we argue that $H(\aleph_{\omega+1})[g] \subseteq H(\aleph_{\omega+1})^{V[g]}$ by $\in$-induction. Suppose that $\dot{X} \in H(\lambda)$ is a $\P$-name. Since any element of $\dot{X}$ is forced to be equal to some $\dot{z}$ such that $\pair{\dot{z}}{p} \in \dot{X}$, and the $\dot{z}$'s are by induction forced to have transitive closure of cardinality $\le \aleph_\omega$, the rest follows by $\in$-induction.

Now we argue that $H(\aleph_{\omega+1})^{V[g]} \subseteq H(\aleph_{\omega+1})[g]$. Again we can argue by $\in$-induction. Suppose that $\dot{X}$ is forced to have transitive closure of cardinality strictly less than $\aleph_{\omega+1}$. Thus, we suppose that $r \in \P$ and $r \Vdash \textup{``}\seq{\dot{y}_\xi}{\xi<\tau} \textup{ enumerates }\dot{X}\textup{''}$ for some $\tau \le \aleph_\omega$ and moreover $r \Vdash \textup{``}\forall \xi<\tau,\exists \dot{w} \in H(\aleph_{\omega+1})^V,\dot{y}_\xi = \dot{w}\textup{''}$. Then it is sufficient to argue that there is some $r' \le r$ and some $\seq{\dot{z}_\xi}{\xi<\tau} \subseteq H(\aleph_{\omega+1})^V$ such that $r' \Vdash \textup{``}\dot{X} = \seq{\dot{z}_\xi}{\xi<\tau}\textup{''}$.

First suppose that $\tau<\aleph_\omega$. Let $\P \cong \P_A \times \P_B$ be such that $\P_A$ has cardinality $\aleph_n$ and $\P_B$ is $(\aleph_n+1)$-strategically closed where $\aleph_n > \tau$. Let $r$ be identified with $(\bar{p},\bar{q})$ under the factorization. (We are suppressing the details of the isomorphism $\P \cong \P_A \times \P_B$.) For each $\xi<\tau$, let $D_\xi \subseteq \P_B$ be the set of conditions $q$ such that there is a maximal antichain $A \subseteq \P_A$ below $\bar{p}$ and a matrix $\{\dot{z}^\xi_{p,q}:p \in A\}$ such that $\dot{z}^\xi_{p,q} \in H(\aleph_{\omega+1})$ and $(p,q) \Vdash \textup{``}\dot{y}_\xi = \dot{z}^\xi_{p,q} \textup{''}$. We argue presently that the $D_\xi$'s are dense below $\bar{q}$: Suppose $q' \le \bar{q}$. Use the $(\aleph_n+1)$-strategic closure to build a sequence of conditions $\seq{q_i}{i \le \aleph_n}$ below $q'$ so that for each $i<\aleph_n$, there is some $p_i \in \P_A$ and $\dot{z}^\xi_{p_i,q_i} \in H(\aleph_{\omega+1})^V$ such that $(p_i,q_i) \Vdash \textup{``}\dot{y}_\xi = \dot{z}^\xi_{p_i,q_i}\textup{''}$, and do so in such a way that the $p_i$'s are incompatible. Since $|\P_A| = \aleph_n$, this leads to the construction of a maximal antichain. Then $q_{\aleph_n} \in D_\xi$.

Having argued for the density of the $D_\xi$'s, and noting that they are therefore open dense, use the strategic closure of $\P_B$ to find some $q^*$ in the intersection $\bigcap_{\xi<\tau}D_\xi$. For each $\xi<\tau$, let $A_\xi$ and $\{\dot{z}^\xi_{p,q^*}:p \in A_\xi\}$ witness that $q^* \in D_\xi$. Then let $\dot{z}_\xi$ be the name that glues the $\dot{z}^\xi_{p,q^*}$'s together along the antichain $A_\xi$. Since $\P \subseteq H(\aleph_{\omega+1})$, it follows that $\dot{z}_\xi \in H(\aleph_{\omega+1})$. Then $(\bar{p},q^*)$ forces that $\dot{X}$ is enumerated by $\seq{\dot{z}_\xi}{\xi<\tau}$, which is an element of $H(\aleph_{\omega+1})$.

Now suppose that $\tau = \aleph_\omega$. Let $\seq{\dot{y}_\xi}{\xi<\aleph_\omega}$ be forced to be an enumeration of $\dot{X}$. Then by the case for $\tau<\aleph_\omega$, we can build a $\le_\P$ decreasing sequence $\seq{p_n}{n<\omega}$ in $\P$ such that $p_n$ forces $\seq{\dot{y}_\xi}{\aleph_{n-1} \le \xi<\aleph_n} = \seq{\dot{z}_\xi}{\aleph_{n-1} \le \xi<\aleph_n}$ (setting $\aleph_{-1}=0$) where the $\dot{z}_\xi$'s are elements of $H(\aleph_{\omega+1})$. Then the lower bound of the $p_n$'s forces that $\dot{X}$ is enumerated by $\seq{\dot{z}_\xi}{\xi<\aleph_\omega}$, which is an element of $H(\aleph_{\omega+1})$.\end{proof}

\begin{claim}\label{htheta-claim} We have
\[
H(\nu)[G]:=\{\dot{a}^G:\dot{a} \in H(\nu)\} = H(\nu)^{V[G]}.
\]
\end{claim}

\begin{proof} It is a classical fact (the proof of which is roughly contained in the proof of \autoref{easton-htheta}) that if $\P$ is $\lambda$-cc for regular $\lambda$ and $\P \subseteq H(\lambda)$, then $H(\lambda)[G] = H(\lambda)^{V[G]}$. It therefore follows that $H(\nu)[G_{\mathbb{I}}]=H(\nu)^{V[G_{\mathbb{I}}]}$. \autoref{easton-htheta} implies that if $H'=H(\nu)^V[G_{\mathbb{I}}]$, then $H'[G_{\S}]=(H')^{V[G_{\mathbb{I}}][G_{\S}]}$. Finally, observe that if $H''=H(\nu)^{V[G_{\mathbb{I}}][G_{\S}]}$, then we easily have $H''[G_{\G}]=(H'')^{V[G]}$ from the fact that $\dot{\G}$ is forced to be $\aleph_{\omega+1}$-distributive.\end{proof}

\begin{claim}\label{containment-claim} If $j:V[G] \to M[G][K]$ is defined as in \autoref{lifting-claim}, then $j[N_0] \in M[G][K]$.\end{claim}

\begin{proof} By \autoref{htheta-claim}, we have that $H(\nu)^V[G] = H(\nu)^{V[G]}$. Since $M^{\nu} \subset M$, we have both $H(\nu)^V \subseteq M$ and $j[H(\nu)^V] \in M$. Since $G \ast K \in M[G \ast K]$, it follows that
\[
j[N_0] =  j[H(\nu)^V[G]] = \{j(\dot{a}^{G}):\dot{a} \in H(\nu)^V \} = \{j(\dot{a})^{G \ast K}:\dot{a} \in H(\nu)^V\},
\]
is in $M[G][K]$.\end{proof}

\begin{proof}[Proof of \autoref{contradiction-claim}] We will prove the requirements for $N:=j[N_0]$ one by one.

\emph{1, Cardinality and Containment of $\aleph_1$:} This is immediate from the second point of \autoref{lifting-claim} and the fact that $j(\aleph_1^{V[G]})=j(\aleph_1^V) = \aleph_1$.

\emph{2, Elementarity:} We use the Tarski-Vaught test in $M[G][K]$. Suppose we have $\bar{a} \in j[N_0]$ and $M[G][K] \models (\exists v \varphi(v,\bar{a}))^{H(j(\nu))}$. Let $\bar{b} \in N_0$ be such that $\bar{a} = j(\bar{b})$. So we are saying $M[G][K] \models  \exists \varphi(v,j(\bar{b}))^{H(j(\nu))}$, so by elementarity, we have that in $V[G]$, $H(\nu) \models \exists v \varphi(v,\bar{a})$. Let $c$ be a witness, i.e. $V[G] \models \varphi(c,\bar{a})^{H(\nu)}$ for $c \in N_0$. Therefore $j(H(\nu)) \models \varphi(j(c),j(\bar{b}))$ where $j(c) \in j[N_0]=N$.

\emph{3, Uniformity for $\aleph_{\omega+1}$:}  We want to show that in $M[G][K]$, $\cf(j[N_0] \cap j(\nu))=\aleph_1$. It is sufficient to observe that in $M[G][K]$, $\cf(j(\nu)) = \aleph_1$ since $\bL$ forces $\cf(\nu) \ge \aleph_1$ and this is preserved by the rest of the iteration.

\emph{4, Guessing:} Suppose for contradiction that in $M[G][K]$, there is an unbounded function $g:\omega_1 \to j(\nu)$ such that for all $i<\omega_1$, $g \rest i \in j[N_0]$. Then for all $i<\omega_1$, there is some $Y_i$ such that $j(Y_i) = g \rest i$. Let $f$ be the function $\bigcup_{i<\omega_1}Y_i$. We argue that $f$ is unbounded in $\nu$. If $\gamma<\nu$, then there is some $i<\omega_1$ such that $g(i) \in (j(\gamma),j(\nu))$. Then it follows by elementarity that $f(i) \in (\gamma,\nu)$. This therefore contradicts Condition \emph{(3)} from \autoref{lifting-claim}.\end{proof}

 Suppose $V[G] \models \textup{``}C \subseteq [H(\nu)]^{\aleph_1}\textup{''}$ is a club. Then given, lift $j:V[G] \to M[G][K]$ from \autoref{lifting-claim}. we observe that $j[C]$ is a directed subset of $j[N_0]$ because if $j(x),j(y) \in j[C]$, then we can find $z \in C$ such that $z \supseteq x \cup y$ and then $j(z) \supseteq j(x \cup y)$. Furthermore, for all $a \in j[N_0]$, there is some $x \in C$ such that $a \in j(x)$. Since $M[G][K] \models \textup{``}j(C)$ is a club in $j(H(\nu)^{V[G]})\textup{''}$, it follows that $j[N_0] \in j(C)$. The statement of \autoref{guessing-lemma} therefore follows by elementarity given the properties that we proved for $j[N_0]$.\end{proof}

\subsection{Failure of Weak Square} Now we prove:

\begin{lemma}\label{weak-square-fails} $V[G] \models \neg \square^*_{\aleph_\omega}$.\end{lemma}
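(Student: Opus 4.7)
The plan is to derive $\neg \square^{*}_{\aleph_\omega}$ from the stationary set of non-sup-internally-approachable models furnished by \autoref{guessing-lemma}. Assume toward contradiction that $\square^{*}_{\aleph_\omega}$ holds in $V[G]$, witnessed by a sequence $\vec{\mathcal{C}} = \seq{\mathcal{C}_\alpha}{\alpha < \aleph_{\omega+1}}$. Inspecting the proof of \autoref{guessing-lemma}, one can pick $N \prec H(\aleph_{\omega+2})$ with $|N| = \aleph_1$, $\omega_1 \cup \{\vec{\mathcal{C}}\} \subseteq N$, $\cf(\sup(N \cap \aleph_{\omega+1})) = \aleph_1$, and $N$ weakly $(\omega_1, \aleph_{\omega+1})$-guessing. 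Write $\delta = \sup(N \cap \aleph_{\omega+1})$.

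The core task is to construct an unbounded $f \colon \omega_1 \to \delta$ such that $f \rest i \in N$ for cofinally many $i < \omega_1$. Once this is done, the weak guessing property supplies some $g \in N$ with $f \rest (N \cap \omega_1) = g$; since $\omega_1 \subseteq N$ this forces $f = g \in N$, so $\sup(\range f) = \delta$ becomes definable from $g$ inside $N$, giving $\delta \in N$. This contradicts the fact that $\delta = \sup(N \cap \aleph_{\omega+1})$ is never itself a member of $N$, and the argument is complete.

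To construct $f$, I would pick any $C \in \mathcal{C}_\delta$. Then $C$ is a club in $\delta$ of order-type $\eta < \aleph_\omega$ with $\cf(\eta) = \aleph_1$. Enumerate $C$ continuously as $\seq{c_\xi}{\xi < \eta}$, fix a continuous cofinal $\omega_1$-sequence $\seq{a_i}{i < \omega_1}$ of limit ordinals in $\eta$, and set $f(i) = c_{a_i}$; this $f$ is cofinal in $\delta$. The cofinality of the set of indices with $f \rest i \in N$ is to come from the coherence of $\vec{\mathcal{C}}$: whenever $c_{a_i}$ is a limit point of $C$ lying in $N$, we have $C \cap c_{a_i} \in \mathcal{C}_{c_{a_i}}$, and since $\vec{\mathcal{C}} \in N$ the set $\mathcal{C}_{c_{a_i}}$ itself lies in $N$ by elementarity, so a bookkeeping argument on the tree of threads through $\vec{\mathcal{C}}$ visible to $N$ isolates $C \cap c_{a_i}$ inside $N$, from which $f \rest i$ is read off.

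The main obstacle is arranging cofinally many $i$ for which both $c_{a_i} \in N$ \emph{and} the specific element $C \cap c_{a_i} \in \mathcal{C}_{c_{a_i}}$ is identifiable inside $N$; the first point is not automatic because $C$ itself need not belong to $N$. The natural workaround is to pass to an associated approachability sequence $\vec{a}$ derived from $\vec{\mathcal{C}}$ in the standard way: since the corresponding approachability club is definable from $\vec{a} \in N$ and elementarily cofinal in $\delta$, the ordinal $\delta$ itself is approachable, witnessed by some unbounded $A \subseteq \delta$ of order-type $\omega_1$ whose proper initial segments are enumerated by $\vec{a}$. Tracking those initial segments through $\vec{a}$-indices that fall below elements of $N$ delivers the required $f$ with $f \rest i \in N$ cofinally, completing the contradiction.
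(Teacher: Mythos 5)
Your overall strategy is genuinely different from the paper's, and it has a gap at exactly the point you flag as ``the main obstacle.'' The contradiction template (produce an unbounded $f\colon\omega_1\to\delta$ with cofinally many initial segments in $N$, then use weak guessing to get $f\in N$ and hence $\delta\in N$) is fine, but neither of your proposed constructions of $f$ works. The coherence route fails because $\square^*_{\aleph_\omega}$ only gives $\mathcal{C}_{c_{a_i}}\in N$, not $\mathcal{C}_{c_{a_i}}\subseteq N$: each level of a weak square sequence may carry up to $\aleph_\omega$ many clubs, while $|N|=\aleph_1$, so the particular coherent club $C\cap c_{a_i}\in\mathcal{C}_{c_{a_i}}$ need not be an element of $N$, and no ``bookkeeping on the tree of threads visible to $N$'' is available to isolate it (if each level had a single club, i.e.\ full $\square_{\aleph_\omega}$, this would be a different matter). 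The approachability workaround has the same defect one level up: if $\vec a$ witnesses $\aleph_{\omega+1}\in I[\aleph_{\omega+1}]$ and $A\subseteq\delta$ is a witness for $\delta$, then each proper initial segment of $A$ is some $a_\beta$ with $\beta<\delta$, but those indices $\beta$ are chosen with no relation to $N$; $a_\beta\in N$ is only guaranteed when $\beta\in N$, and $N\cap\delta$ is just some cofinal set of size $\aleph_1$. The known implications go the other way (internally approachable models have approachable suprema), and the ``weakly $(\omega_1,\aleph_{\omega+1})$-guessing'' property of \autoref{guessing-lemma} is deliberately weak---it only applies to functions whose initial segments literally lie in $N$---so even the literature on full ($\mathsf{ISP}$-style) guessing models killing approachability does not plug this hole. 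Indeed, the subtlety of passing from approachability of $\sup(N\cap\aleph_{\omega+1})$ to approximation \emph{inside} $N$ is precisely the issue the paper discusses around CFM's Example 6.7.

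The paper avoids this entirely and proves \autoref{weak-square-fails} by a different mechanism: it quotes the Fuchs--Rinot theorem that $\square^*_{\aleph_\omega}$ (with $\aleph_\omega$ strong limit) implies that forcing with $\Add(\aleph_{\omega+1})$ adds a non-reflecting stationary subset of $\aleph_{\omega+1}\cap\cof(\omega)$, and then runs a second supercompact lifting argument (\autoref{lifting-claim2} and \autoref{contradiction-claim2}, with the guessed iterand $\dot{\S}\ast\dot{\G}\ast\dot{\Add}(\nu)\ast\dot{\bL}$ and a master condition for the Cohen part) to show that in $V[G][\tilde G]$ every stationary subset of $\aleph_{\omega+1}\cap\cof(\omega)$ reflects, which is the desired contradiction. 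So the failure of weak square is extracted from stationary reflection after Cohen forcing, not from the non-(sup-internal-approachability)/weak-guessing conclusion; if you want to salvage your route, you would need to prove the missing ZFC step that $\square^*_{\aleph_\omega}$ forces club many such $N$ to admit an unbounded $f\colon\omega_1\to\sup(N\cap\aleph_{\omega+1})$ with cofinally many initial segments in $N$, and that step is not available as stated.
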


We will use:

\begin{fact}[Fuchs-Rinot \cite{Fuchs-Rinot2018}]Suppose $\lambda$ is a singular cardinal such that $\mu^{\cf(\lambda)}<\lambda$ for all $\mu<\lambda$, and suppose $\square_\lambda^*$ holds. Then in a generic extension by $\textup{Add}(\lambda^+)$, there is a non-reflecting stationary subset of $\lambda^+ \cap \cof(\lambda)$.\end{fact}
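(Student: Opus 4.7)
The plan is a proof by contradiction combining the Fuchs--Rinot fact with the earlier clauses of \autoref{maintheorem}. Suppose toward contradiction that $V[G] \models \square^*_{\aleph_\omega}$. By clause (1), $\aleph_\omega$ is a strong limit in $V[G]$, so $\mu^{\cf(\aleph_\omega)} = \mu^\omega < \aleph_\omega$ for every $\mu<\aleph_\omega$, satisfying the hypotheses of the Fuchs--Rinot fact. Let $H$ be $\textup{Add}(\aleph_{\omega+1})$-generic over $V[G]$; then $V[G][H]$ contains a non-reflecting stationary set $S \subseteq \aleph_{\omega+1} \cap \cof(\aleph_\omega)$.

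Next I would record the preservation behavior of $\textup{Add}(\aleph_{\omega+1})$, which is $\aleph_{\omega+1}$-closed and hence $\aleph_\omega$-distributive. Consequently: (a) no new elements of $\prod_{n<\omega}\aleph_n$ are added, so scales on $\aleph_\omega$ are the same in $V[G]$ and $V[G][H]$, the condition of being a good point is absolute (goodness witnesses live in $V[G]$ and cofinalities at $\aleph_{\omega+1}$ are preserved), and in particular all scales remain good in $V[G][H]$; (b) no new $\omega_1$-sequences into $\aleph_{\omega+1}$ are added, so the weakly $(\omega_1,\aleph_{\omega+1})$-guessing models obtained from \autoref{guessing-lemma} remain weakly guessing in $V[G][H]$, with $\cf(\sup(N \cap \aleph_{\omega+1})) = \omega_1$ preserved.

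I would then derive the contradiction in $V[G][H]$ from the simultaneous presence of the non-reflecting $S$, stationarily many weakly guessing $N$ of cardinality $\aleph_1$, and all scales being good. Concretely, pick such an $N$ containing a good scale $\vec f$ and a code for $S$, and let $\delta = \sup(N \cap \aleph_{\omega+1})$. Goodness of $\vec f$ at $\delta$ (via \autoref{goodness-characterized}) supplies a $<$-increasing $\omega_1$-cofinal chain of exact-upper-bound witnesses approaching $\delta$. The weak guessing property of $N$ should force cofinally many initial segments of this chain to lie in $N$; combining these with $S \in N$ one extracts a stationary subset of $S \cap \delta$, making $\delta$ a reflection point of $S$ and contradicting the choice of $S$.

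The main obstacle is this final step: extracting the stationary reflecting set from the interaction between the weak guessing property of $N$ and the good scale at $\delta$. The difficulty is that weak guessing provides only cofinally many initial segments in $N$, not all of them, so the argument must carefully align the scale-theoretic $\omega_1$-cofinal structure at $\delta$ -- namely the cofinal interleaving of scale witnesses with the $<$-increasing chain approaching the exact upper bound -- with those cofinal witnesses to capture a stationary trace of $S$ inside $N \cap \delta$. This is where the analytic strength of the weakly guessing models constructed in the previous subsection, together with the ``all scales good'' conclusion, must be fully exploited.
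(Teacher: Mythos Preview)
Your proposal is not a proof of the stated Fact at all. The statement in question is a result quoted from Fuchs--Rinot and is not proved in the paper; it is invoked as a black box. What you have written is instead an attempted proof of the surrounding \autoref{weak-square-fails} (that $\square^*_{\aleph_\omega}$ fails in $V[G]$), \emph{using} the Fuchs--Rinot fact as an ingredient.

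Even read as an argument for \autoref{weak-square-fails}, your approach diverges from the paper's and has a genuine gap. The paper does not try to extract reflection from the weakly $(\omega_1,\aleph_{\omega+1})$-guessing models of \autoref{guessing-lemma}. Instead it performs a \emph{separate} lifting of a supercompact embedding, this time guessed by Case~(2) of the iteration (the iterand $\dot{\S} \ast \dot{\G} \ast \dot{\Add}(\nu) \ast \dot{\bL}$), and obtains $\Refl(\aleph_{\omega+1}\cap\cof(\omega))$ in $V[G][\tilde G]$ directly via \autoref{lifting-claim2} and \autoref{contradiction-claim2}: one shows $j[S]$ is stationary below $\rho=\sup j[\nu]$ and pulls reflection back by elementarity.

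Your final step is where the argument breaks down. The weak $(\omega_1,\aleph_{\omega+1})$-guessing property is a \emph{negative} statement: it rules out the existence of an unbounded $f:\omega_1\to\delta$ with cofinally many initial segments in $N$. It says nothing about arbitrary clubs in $\delta$, and a witness $C\subseteq\delta$ to non-reflection of $S$ need not have its increasing enumeration approximated inside $N$. The $<$-increasing chain you extract from goodness of the scale at $\delta$ consists of values of the exact upper bound, not elements of $S$, and there is no mechanism linking those witnesses to membership in $S\cap\delta$. Indeed, the models produced by \autoref{guessing-lemma} are deliberately non-tight and not sup-internally approachable; it is precisely approachability- or tightness-type properties that are typically used to transfer stationarity of $S$ down to $S\cap\delta$. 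So the ``main obstacle'' you flag is not a detail to be filled in but the absence of a working idea, and the paper avoids it entirely by arguing through a fresh embedding rather than through the guessing models.
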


Therefore it will be enough to show that if $\tilde{G}$ is $\Add(\aleph_{\omega+1})$-generic over $V[G]$, then $V[G][\tilde{G}] \models \textup{\textsf{Refl}}(\aleph_{\omega+1} \cap \cof(\omega))$, i.e$.$ that all stationary subsets of $\aleph_{\omega+1} \cap \cof(\omega)$ reflect.

We have an analogous pair of claims to deal with.

\begin{claim}\label{lifting-claim2} There is a forcing extension $V[G][\tilde{G}][K] \supset V[G][\tilde{G}]$ such that:

\begin{enumerate}

\item  $j:V \to M$ can be lifted to $j:V[G \ast \tilde{G}] \to M[j(G \ast H)]=M[G][\tilde{G}][K]$,

\item Stationary subsets of $\aleph_{\omega+1} \cap \cof(\omega)$ in $V[G][\tilde{G}]$ are stationary in $V[G][\tilde{G}][K]$,

\item $V[G][\tilde{G}][K] \models \textup{``}\cf(\nu) = \aleph_1\textup{''}$.
\end{enumerate}\end{claim}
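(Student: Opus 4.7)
The plan is to mirror the proof of \autoref{lifting-claim}, adapting it so that the additional $\tilde G$ factor is absorbed by choosing the Laver guess to correspond to Case (2) of the iteration definition. Using the properties of the Laver function, fix an embedding $j\colon V\to M$ with $j(\kappa)>\nu$, $M^{\nu}\subseteq M$, and $j(\ell)(\kappa)$ equal to the $\mathbb{I}$-name for
\[
\dot{\S}\ast\dot{\G}\ast\dot{\Add}(\kappa^{+\omega+1})\ast\dot{\mathbb{L}}.
\]
By the definition of $\mathbb{I}$ we then have
\[
j(\mathbb{I})=\mathbb{I}\ast\dot{\S}\ast\dot{\G}\ast\dot{\Add}(\kappa^{+\omega+1})\ast\dot{\mathbb{L}}\ast\dot{\E}'
\]
for a tail remainder $\dot{\E}'$ that includes a L\'evy collapse. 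We identify $G_{\S}\ast G_{\G}\ast\tilde{G}$ as the generic for the first three factors past stage $\kappa$, and pick fresh generics $K_{\mathbb{L}}$ and $K_{\E'}$ over $V[G][\tilde{G}]$.

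The stepwise lifting now follows the same pattern as in \autoref{lifting-claim}. Silver's classical argument lifts $j$ through $\mathbb{I}$. Master conditions for $j(\S)$ and $j(\G)$ are built exactly as before, with $j(b_{\textup{\textsf{prod}}})$ (obtained from $K_{\mathbb{L}}$) again serving as the exact upper bound witnessing that $\rho:=\sup j[\nu]$ is a good point of $j(\vec f)$, via the analogue of \autoref{semiproper-bounding-claim}. The one genuinely new layer is $j(\Add(\aleph_{\omega+1}))$: since each condition in $\Add(\aleph_{\omega+1})$ is a partial function of size $<\aleph_{\omega+1}$, the union $\bar q:=\bigcup_{q\in\tilde G}j(q)$ has support of size $\le\nu<j(\aleph_{\omega+1})$ and is thus a condition in $j(\Add(\aleph_{\omega+1}))$; it serves as a master condition for $j[\tilde G]$, and we choose $K_{\Add}$ to be any $j(\Add(\aleph_{\omega+1}))$-generic below $\bar q$ over the previous step. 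Setting $K:=K_{\mathbb{L}}\ast K_{\E'}\ast K_{\S}\ast K_{\G}\ast K_{\Add}$ then yields Condition (1), and Condition (3) is immediate from the presence of $K_{\mathbb{L}}$ together with \autoref{namba-succ-sing-pres} and the L\'evy collapse in $\E'$.

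The genuinely new content is Condition (2). The three topmost layers $K_{\S}, K_{\G}, K_{\Add}$ live in the target model and are each, in their respective intermediate extensions, either sufficiently closed or have sufficiently small cardinality to respect stationary subsets of $\nu$ from $V[G][\tilde G]$. The remainder term $\dot{\E}'$ is a revised countable support iteration of forcings preserving stationary subsets of $\omega_1$, and its iterands' strategic closure at scales at or above $\nu$ ensures that stationarity of subsets of $\nu\cap\cof(\omega)$ is maintained. The delicate case is $K_{\mathbb{L}}$: here one argues, via a bounding and fusion argument directly modelled on \autoref{bounding}, that any purported new club in $\nu$ added by $\bL$ in fact contains a club subset from $V[G][\tilde G]$, so that $V[G][\tilde G]$-stationary subsets of $\nu\cap\cof(\omega)$ meet every new club and hence remain stationary. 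This Namba-bounding preservation step is the part of the argument I expect to require the most technical care; everything else in the lifting is a direct adaptation of the proof of \autoref{lifting-claim}.
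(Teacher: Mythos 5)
Your overall skeleton matches the paper's: you choose the Laver guess to be the name for $\dot\S\ast\dot\G\ast\dot\Add(\nu)\ast\dot\bL$ so that Case (2) of the iteration applies, factor $j(\mathbb{I})$ accordingly, absorb $G_\S\ast G_\G\ast\tilde G$, take fresh generics $K_\bL$ and $K_\E$, and reuse the master-condition arguments for $j(\S)$ and $j(\G)$, adding the new master condition $\bigcup j[\tilde G]$ for $j(\Add(\nu))$ (your size estimate for its support is slightly off, since $j(q)$ has domain $j(\dom q)$, not $j[\dom q]$, but the union is still a condition because it is a union of at most $\nu$-many conditions of a forcing that is $j(\nu)$-closed-in-size in $M$). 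All of this is essentially the paper's route for Condition (1), and your assembly of $K$ is correct.

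The genuine gap is in your proof of Condition (2), which is the heart of the claim. Your mechanism for the $K_\bL$ step --- that ``any purported new club in $\nu$ added by $\bL$ in fact contains a club subset from $V[G][\tilde G]$,'' to be proved by a fusion/bounding argument modelled on \autoref{bounding} --- is false. By \autoref{cummings-collapse} (the good scale added by $\G$ survives into $V[G][\tilde G]$ since $\Add(\nu)$ is ${<}\nu$-closed), $\bL$ collapses $\nu$ to cardinality $\aleph_1$ and gives it cofinality $\omega_1$; hence the extension contains clubs in $\nu$ of order type $\omega_1$, and such a club cannot contain any ground-model club, since those have order type $\nu$. So the preservation of stationary subsets of $\nu\cap\cof(\omega)$ by $\bL$ cannot be obtained by club covering, and it has nothing to do with the dominating-function lemma \autoref{bounding}; it is a separate fact about this Namba forcing (the paper simply invokes it, in the style of Cummings--Foreman--Magidor). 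Your treatment of the later steps is also unavailable as stated: the tail $\E$ is just a semiproper iteration containing collapses of cardinals above $\nu$ to $\aleph_1$, so it has no strategic closure ``at scales at or above $\nu$.'' The missing idea --- and the paper's actual argument --- is a reduction: once $K_\bL$ has been forced, $\nu$ has cofinality $\omega_1$ and cardinality $\aleph_1$ (again \autoref{cummings-collapse} together with \autoref{namba-succ-sing-pres}), so the stationarity of a set $S\subseteq\nu\cap\cof(\omega)$ from $V[G][\tilde G]$ is equivalent to the stationarity of its trace on a fixed continuous cofinal map $\omega_1\to\nu$, and it therefore suffices that every remaining step ($\E$, $K_\S$, $K_\G$, $K_{\Add}$) preserves stationary subsets of $\omega_1$, which they do (semiproper tail; countably closed or strategically closed master-condition forcings). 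Without this reduction, and with the false covering claim, your argument for Condition (2) does not go through.
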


\begin{claim}\label{contradiction-claim2} Suppose we have the lift $j:V[G \ast \tilde{G}] \to M[j(G * \tilde{G})]$ with the properties declared in \autoref{lifting-claim2}. Assume also that $|\nu|=\aleph_1$ in $V[G][\tilde{G}]$. Then $V[G][\tilde{G}] \models \textup{\textsf{Refl}}(\aleph_{\omega+1} \cap \cof(\omega))$.\end{claim}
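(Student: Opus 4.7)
The plan is a standard supercompactness-style reflection argument driven by the lifted embedding $j: V[G \ast \tilde{G}] \to M[j(G \ast \tilde{G})]$ from \autoref{lifting-claim2}. Fix an arbitrary stationary $S \subseteq \nu \cap \cof(\omega)$ in $V[G][\tilde{G}]$ and set $\rho := \sup j[\nu]$. The assumption that $|\nu| = \aleph_1$ together with condition (3) of \autoref{lifting-claim2} means $j[\nu]$ is a cofinal subset of $\rho$ of size $\aleph_1$ in $M[j(G \ast \tilde{G})]$, so $\cf^{M[j(G \ast \tilde{G})]}(\rho) = \aleph_1$. Condition (2) ensures $S$ stays stationary in $V[G][\tilde{G}][K]$; since any club of $\nu$ in $M[j(G \ast \tilde{G})]$ is also a club in the larger model $V[G][\tilde{G}][K]$, $S$ is stationary in $M[j(G \ast \tilde{G})]$ as well. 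My target is to show that $j(S) \cap \rho$ is stationary in $\rho$ from the point of view of $M[j(G \ast \tilde{G})]$, because then the formula ``$j(S)$ reflects at some ordinal below $j(\nu)$ of uncountable cofinality'' holds there, and elementarity of $j$ transports the conclusion back to $V[G][\tilde{G}]$ for $S$ and $\nu$.

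For the stationarity step, fix an arbitrary club $C \subseteq \rho$ in $M[j(G \ast \tilde{G})]$ and define
\[
E := \{\alpha < \nu : \sup j[\alpha] \in C\}.
\]
Closure of $E$ is routine: if $\seq{\alpha_i}{i < \delta}$ is increasing with $\alpha_i \in E$ and $\alpha^* = \sup_i \alpha_i$, then $\sup j[\alpha^*] = \sup_i \sup j[\alpha_i] \in C$. Unboundedness above a given $\beta_0 < \nu$ comes from an interleaving recursion: choose $\gamma_n \in C$ with $\gamma_n > \sup j[\beta_n]$, then $\beta_{n+1} < \nu$ with $\sup j[\beta_{n+1}] > \gamma_n$, and let $\beta^* := \sup_n \beta_n$; then $\sup j[\beta^*] = \sup_n \gamma_n \in C$. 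Since $S$ is stationary in $M[j(G \ast \tilde{G})]$, pick $\alpha \in S \cap E$. Because $\cf(\alpha) = \omega < \crit(j)$, we get $j(\alpha) = \sup j[\alpha] \in C$, and $j(\alpha) \in j(S)$; hence $C \cap j(S) \neq \emptyset$.

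I expect the main obstacle to be the bookkeeping rather than the reflection argument itself: one must verify that $j \rest \nu$ is indeed available in $M[j(G \ast \tilde{G})]$ so that $E$ is definable there, and that the stationarity of $S$ genuinely descends from $V[G][\tilde{G}][K]$ into the inner model $M[j(G \ast \tilde{G})]$. The former should follow from the fact that the original embedding is $\nu$-supercompact and survives the lift, while the latter follows from $M[j(G \ast \tilde{G})] \subseteq V[G][\tilde{G}][K]$. Once these are in hand, elementarity of $j$ from $V[G \ast \tilde{G}]$ to $M[j(G \ast \tilde{G})]$ delivers $\textup{\textsf{Refl}}(\aleph_{\omega+1} \cap \cof(\omega))$ in $V[G][\tilde{G}]$, as needed.
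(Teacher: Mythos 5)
Your proposal is correct and takes essentially the same route as the paper, which merely cites the analogous Cummings--Foreman--Magidor claim, asserts that the reflected image of $S$ is stationary at $\rho=\sup j[\nu]$ in $M[j(G\ast\tilde{G})]$, and concludes by elementarity; your pullback-club argument via $E$ and the continuity of $j$ at points of countable cofinality is exactly the intended ``straightforward'' verification (and you correctly work with $j(S)\cap\rho$ and with $S$ stationary in $V[G][\tilde{G}]$, which is what the application needs). The only cosmetic point is that you can sidestep the bookkeeping you flag---whether $E$ or even $S$ belongs to $M[j(G\ast\tilde{G})]$---by testing stationarity of $j(S)\cap\rho$ only against clubs $C\in M[j(G\ast\tilde{G})]$ while working entirely in $V[G][\tilde{G}][K]$, where condition (2) of the lifting claim gives stationarity of $S$ and $j\rest\nu$ is available, so that $S\cap E\neq\emptyset$ already yields a point of $j[S]\subseteq j(S)$ in $C$.
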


Now we can prove the claims.

\begin{proof}[Proof of \autoref{lifting-claim2}] We choose an embedding $j$ with $j(\ell)(\kappa) = \dot{\S} \ast \dot{\G} \ast \dot{\Add}(\nu) \ast \dot{\bL}$. We will lift the embedding in a series of steps in which we extend the range of an embedding to the next generic. We will argue for point \emph{(2)} in the first step.

\emph{Lifting to domain $V[G_{\mathbb{I}}]$:} Because of Case (2) of the definition of $\mathbb{I}$, we have $V[G_\mathbb{I}] \models \WRP$. Therefore $\S \ast \dot{\G} \ast \dot{\Add}(\aleph_{\omega+1}) \ast \dot{\bL}$ is semiproper in $V[G_{\mathbb{I}}]$. By the chain condition of $\mathbb{I}$, we have $M[G_{\mathbb{I}}]^\nu \subset M[G_{\mathbb{I}}]$, so $\S \ast \dot{\G} \ast \dot{\Add}(\nu) \ast \dot{\bL}$ is also semiproper in $M[G_{\mathbb{I}}]$. Therefore, by elementarity, we have that
\[
j(\mathbb{I}) = \mathbb{I} \ast \dot{\S} \ast \dot{\G}  \ast \dot{\Add}(\aleph_{\omega+1}) \ast \dot{\mathbb{L}}  \ast \dot{\E}.
\]

Let $K_\mathbb{L}$ be $\mathbb{L}$-generic over $V[G][\tilde{G}]$ and let $K_{\E}$ be $\E$-generic over $V[G][K_{\mathbb{L}}]$. Then Silver's classical lifting argument (see \cite{Handbook-Cummings}) gives us an embedding $j:V[G_{\mathbb{I}}] \to M[G_{\mathbb{I}} \ast G_\S \ast G_\G \ast G_{\mathbb{L}} \ast \tilde{G} \ast K_{\bL} \ast K_{\E}] = M[j(G_{\mathbb{I}})]$ which is defined in $V[j(G_{\mathbb{I}})]$. (We will drop the dots to refer to the evaluated forcings in $V[j(G_\mathbb{I})]$.)

Now we argue for stationary preservation:  We know that $\mathbb{L}$ forcing preserves stationary subsets of $\aleph_{\omega+1} \cap \cof(\omega)$. We also know that that $\mathbb{L}$ collapses $\aleph_{\omega+1}^W$ to have cardinality $\aleph_1$ by \autoref{cummings-collapse}. It will be sufficient, therefore, to show that stationary subsets of $\omega_1$ are preserved in the remaining steps.

\emph{Lifting to domain $V[G_{\mathbb{I}}][G_{\S}]$}:  Analogous to \autoref{lifting-claim} in terms of both obtaining the lift and showing that the extension preserves stationary subsets of $\omega_1$.

\emph{Lifting to domain $V[G_{\mathbb{I}}][G_\S][G_{\G}]$}: Analogous to \autoref{lifting-claim}.

\emph{Lifting to domain $V[G_{\mathbb{I}}][G_\S][G_{\G}][\tilde{G}]$}: Use a master condition argument. As in the previous steps, we have $j"\tilde{G} \in M[j(G_{\mathbb{I}})]$. Therefore $q = \bigcup j"\tilde{G} \in M[j(G_{\mathbb{I}})]$, so we let $K_\textup{Add}$ be any $j(\Add(\nu))$-generic containing $q$. As in the previous two steps, $j(\Add(\nu))$ is countably closed and therefore preserves the relevant stationary sets.

Then we let $K = G_{\mathbb{L}} \ast G_{\E} \ast G_\S \ast G_\G$, completing the proof of the claim.\end{proof}

\begin{proof}[Proof of \autoref{contradiction-claim2}] (See \cite[Claim 7]{Cummings-Foreman-Magidor2003}.) Let $S \in V[G]$ be a stationary subset of $\aleph_{\omega+1} \cap \cof(\omega)$. Consider the lift $j:V[G][\tilde{G}] \to M[G][\tilde{G}][K]$ from \autoref{lifting-claim2}. Again, let $\rho = \sup j[\nu]$ where $\nu=\kappa^{+\omega+1}$. It is straightforward to show that $M[G][\tilde{G}][K] \models \textup{``}S \cap \rho$ is stationary $\textup{''}$, from which the claim follows by elementarity.\end{proof}

This finishes the lemma on the failure of $\square_{\aleph_\omega}^*$ and it completes the proof of \autoref{maintheorem}.

\subsection{Non-Tightness of the Models from the Main Theorem}

We include a last observation on the model from \autoref{maintheorem}. Recall \autoref{tightness-def}.

\begin{proposition} In $V[G]$, for all $\theta \ge \aleph_{\omega+1}$, there are stationarily many $N \prec H(\theta)$ of cardinality $\aleph_1$ that are not tight for $K = \{\aleph_n:2 \le n < \omega\}$.\end{proposition}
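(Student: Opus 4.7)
The plan is to parallel the strategy of \autoref{guessing-lemma} very closely, reusing the lift $j : V[G] \to M[j(G)]$ from \autoref{lifting-claim} and the same candidate model $N := j[N_0]$ with $N_0 = H(\nu)^{V[G]}$. By \autoref{contradiction-claim} we already have that $|j[N_0]| = \aleph_1$ and that $j[N_0]$ is appropriately elementary, and the club argument at the end of the proof of \autoref{guessing-lemma} transfers any first-order property of $j[N_0]$ witnessed in $j(C)$ back to an element of $C$ in $V[G]$, for every club $C \subseteq [H(\theta)]^{\aleph_1}$ (the mild adjustment for $\theta > \nu$ is routine). It therefore suffices to verify that $j[N_0]$ is not tight for $j(K) = \{j(\aleph_n^{V[G]}) : 2 \le n < \omega\}$ in $M[j(G)]$.

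To produce the witness to non-tightness I plan to use the Namba generic $b_{\textup{\textsf{prod}}}$. Define $f^* \in M[j(G)]$ by $f^*(n) := j(b_{\textup{\textsf{prod}}}(n))$ for $n \ge 2$; although $b_{\textup{\textsf{prod}}}$ itself is not in $V[G]$, each of its values is an ordinal below $\aleph_n^{V[G]}$ and hence an element of $N_0$, so $j$ is defined on it. Consequently $f^*(n) \in j(\aleph_n^{V[G]}) \cap j[N_0]$, placing $f^*$ in the product $\prod_{n \ge 2}(j(\aleph_n^{V[G]}) \cap j[N_0])$. (Under either common reading of this product, whether via membership or via $\chi_{j[N_0]}$, the same $f^*$ works.)

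Now suppose for contradiction that some $g \in j[N_0] \cap \prod_{n \ge 2} j(\aleph_n^{V[G]})$ satisfies $f^* <^* g$ in $M[j(G)]$. Write $g = j(g_0)$ with $g_0 \in N_0 \cap \prod_{n \ge 2} \aleph_n^{V[G]}$. Because $j$ is order-preserving on ordinals, $f^* <^* g$ pulls back to $b_{\textup{\textsf{prod}}}(n) < g_0(n)$ for all sufficiently large $n$, that is, $b_{\textup{\textsf{prod}}} <^* g_0$ in $V[G][K_{\bL}]$. On the other hand, by \autoref{bounding}, $b_{\textup{\textsf{prod}}}$ is an exact upper bound of the scale $\vec f$ of $V[G]$, and since $g_0 \in V[G]$ is in the product, the cofinality of $\vec f$ gives $\alpha < \aleph_{\omega+1}^{V[G]}$ with $g_0 <^* f_\alpha <^* b_{\textup{\textsf{prod}}}$, contradicting $b_{\textup{\textsf{prod}}} <^* g_0$.

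The only genuinely new ingredient beyond the content of \autoref{guessing-lemma} is the appeal to \autoref{bounding}: the fact that $b_{\textup{\textsf{prod}}}$ is an exact upper bound of the ground-model scale is precisely what guarantees that $j$ applied pointwise to $b_{\textup{\textsf{prod}}}$ cannot be dominated by any $j[N_0]$-function, and this is the main obstacle to the proof in the sense that identifying the right witness is where the whole argument pivots; the rest is bookkeeping (checking that $f^*$ really lies in the required product and transferring the non-tightness by elementarity of $j$ together with the $j[N_0] \in j(C)$ argument from \autoref{guessing-lemma}).
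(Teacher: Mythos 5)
Your proposal is correct and follows essentially the same route as the paper: the paper also takes $N = j[N_0]$ from \autoref{lifting-claim}/\autoref{contradiction-claim}, uses the pointwise image $j(b_{\textup{\textsf{prod}}})$ as the witness in $\prod_n (j(\aleph_n^{V[G]}) \cap j[N_0])$, and rules out domination by any $g = j(g_0) \in j[N_0]$ via the scale property together with \autoref{bounding}, transferring non-tightness back by $j[N_0] \in j(C)$. The only cosmetic difference is that you pull the inequality back to $V[G][K_{\bL}]$ and contradict $b_{\textup{\textsf{prod}}}$ being an upper bound there, whereas the paper pushes forward and argues that $j(b_{\textup{\textsf{prod}}})$ is an (exact) upper bound of $j(\vec f)\rest\rho$; both are sound.
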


\begin{proof} Let $j[N_0]$ be as in \autoref{lifting-claim} and \autoref{contradiction-claim}. It is enough to show that $j[N_0]$ is not tight. Without loss of generality, we can assume that a fixed club $C \subseteq [H(\nu)^{V[G]}]^{\aleph_1}$ is chosen such that $\forall x \in C$, if $\cf(\sup(\nu \cap x)) = \omega_1$, then $\sup(\nu \cap x)$ is a good point of the scale $\vec{f}$ that was added by $\G$. This uses a predicate $[\alpha \mapsto f_\alpha]$ for $\vec{f}$ in the structure $H(\nu)^{V[G]}$ (since we cannot literally have $\vec{f} \in H(\nu)^{V[G]}$ because $|\vec{f}|=\nu$).

Recall again that by point (3) of definition \autoref{laver-namba-def-rep} and the preservation of $\aleph_1$, $\cf(j(\vec f))_\rho)(k) = \aleph_1$ for sufficiently large $k<\omega$.

First, we argue that $j(b_{\mathsf{prod}})$ is an exact upper bound of $j(\vec{f}) \rest \rho$ (using the same abuse of notation from earlier). This follows by the argument for \autoref{semiproper-bounding-claim}.

Since $j[N_0] \cap j(\nu) = \rho$, no element of $j[N_0]$ can dominate $j(b_\mathsf{prod})$: Suppose $g \in j[N_0] \cap j(\nu)$. Then there is $\alpha<\nu$, $g<^* f^*_{j(\alpha)} <^* j(b_\mathsf{prod})$.

We conclude by noting that $j(b_\mathsf{prod}) \in \prod_{n<\omega}(j(\aleph_n^{V[G]}) \cap j[N_0])$.\end{proof}

\section{A Variation of the Main Theorem}

\subsection{Another Failure of Square for the Earlier Models}

We will define a ``Miller version'' of our Namba forcing, which is used by Cummings, Foreman, and Magidor as well as Krueger.

\begin{definition}\label{miller-namba-def-rep} Fix a function $d: \omega \to \omega \setminus \{0,1\}$ as in \autoref{laver-namba-def-rep}.

The poset $\M$ will consist of trees $p$ such that the following hold:

\begin{enumerate}

\item $p$ is a tree consisting of finite sequences $t$.

\item For all $t \in p$ and $n \in \dom(t)$, $t(n) \in \aleph_{d(n)}$.

\item For all $t \in p$, there is some $t' \sqsupseteq t$ such that if $n = \dom(t')$, then $\{\eta:  t {}^\frown \eta \in p \}$ has cardinality $\aleph_{d(n)}$.

\end{enumerate}

The ordering on $\M$ is given by inclusion.\end{definition}
 
We will not use $\M$ in conjunction with good scales, so there is no need to employ stationary splitting.

We take an interest in a different type of square sequence:

\begin{definition} If $\lambda$ is a regular cardinal and $\kappa \le \lambda$, then $\square(\lambda,\kappa)$ holds if there is a sequence $\seq{\mathcal{C}_\alpha}{\alpha<\lambda}$ such that:
\begin{enumerate}
\item for all $C \in \mathcal{C}_\alpha$, $C$ is a club in $\alpha$,
\item for all $C \in \mathcal{C}_\alpha$ and $\beta \in \lim C$, $C \cap \beta \in \mathcal{C}_\beta$,
\item there is no club $D \subseteq \lambda$ such that for all $\alpha \in \lim D$, $D \cap \alpha \in \mathcal{C}_\alpha$.
\end{enumerate}\end{definition}

In this subsection, we will obtain:

\begin{theorem}\label{little-theorem} Assuming the consistency of a supercompact cardinal, there is a model in which the following hold:

\begin{enumerate}
\item $\aleph_\omega$ is a strong limit,
\item $\square_{\aleph_n}$ holds for all $n<\omega$,
\item $\square(\aleph_{\omega+1},\aleph_1)$ fails.
\end{enumerate}\end{theorem}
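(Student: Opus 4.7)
The plan is to run a variant of the iteration $\mathbb{I}$ from \autoref{section-setup} that drops the good-scale step $\dot{\G}$, replaces $\dot{\bL}$ by the Miller-style Namba $\dot{\M}$ of \autoref{miller-namba-def-rep}, and lets the Laver function guess names of the form $\dot{\S}\ast\dot{\M}\ast\dot{\B}(\dot{T})\ast\dot{\Col}(\aleph_1,\chi)$, where $\dot{T}$ is a name for a wide Aronszajn tree. Call this iteration $\mathbb{I}'$, and let $V[G]=V[G_{\mathbb{I}'}][G_\S]$ for $\S=\prod_{n<\omega}\S_{\aleph_n}$ as before. The arguments of \autoref{section-setup} show that $\kappa=\aleph_2^{V[G]}$, $\aleph_\omega^{V[G]}$ is a strong limit, and $\square_{\aleph_n}$ holds for every $n<\omega$, yielding Clauses~(1) and~(2).

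For Clause~(3), assume toward a contradiction that $\vec{\mathcal{C}}=\seq{\mathcal{C}_\alpha}{\alpha<\aleph_{\omega+1}}$ is a $\square(\aleph_{\omega+1},\aleph_1)$-sequence in $V[G]$, and form its tree of coherent partial threads $T_{\vec{\mathcal{C}}}$: nodes are pairs $(C,\alpha)$ with $C\in\mathcal{C}_\alpha$, ordered by $(C,\alpha)\trianglelefteq(C',\alpha')$ iff $\alpha\in\lim C'$ and $C'\cap\alpha=C$. Each level has size $\le\aleph_1$, and by the failure of the thread clause $T_{\vec{\mathcal{C}}}$ has no cofinal branch in $V[G]$. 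Using $\nu=\kappa^{+\omega+1}$-supercompactness and the Laver guess, pick $j:V\to M$ with $\crit(j)=\kappa$, $M^\nu\subseteq M$, and $j(\ell)(\kappa)$ the $\mathbb{I}'$-name for $\dot{\S}\ast\dot{\M}\ast\dot{\B}(\dot{T}_{\vec{\mathcal{C}}})\ast\dot{\Col}(\aleph_1,\chi)$ for an appropriate $\chi$. Lift $j$ step by step through $\mathbb{I}'$, $\S$, $\M$, $\B(T_{\vec{\mathcal{C}}})$, and $\Col(\aleph_1,\chi)$ using master-condition arguments exactly as in the proof of \autoref{lifting-claim}, producing $j:V[G]\to M[j(G)]$ in an extension preserving $\aleph_1$.

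Now perform the standard threading-by-supercompactness argument in $M[j(G)]$: let $\rho=\sup j[\aleph_{\omega+1}^{V[G]}]$ and pick any $C\in j(\vec{\mathcal{C}})_\rho$. For each $\beta<\aleph_{\omega+1}^{V[G]}$ with $j(\beta)\in\lim C$, coherence of $j(\vec{\mathcal{C}})$ gives $C\cap j(\beta)\in j(\mathcal{C}_\beta)$; since $|\mathcal{C}_\beta|\le\aleph_1<\crit(j)$, $j(\mathcal{C}_\beta)=j[\mathcal{C}_\beta]$, so $C\cap j(\beta)=j(C'_\beta)$ for a unique $C'_\beta\in\mathcal{C}_\beta$. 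The $C'_\beta$'s cohere on the (cofinal) set $D=\{\beta:j(\beta)\in\lim C\}$, and $E=\bigcup_{\beta\in D}C'_\beta$ is then a thread of $\vec{\mathcal{C}}$ in $M[j(G)]$ and therefore a cofinal branch of $T_{\vec{\mathcal{C}}}$. But $\B(T_{\vec{\mathcal{C}}})$ has specialized $T_{\vec{\mathcal{C}}}$, and all subsequent lift steps preserve $\aleph_1$, so $T_{\vec{\mathcal{C}}}$ admits no cofinal branch in $M[j(G)]$ --- the desired contradiction.

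The principal obstacle is to show that $\M$ does not add a cofinal branch to $T_{\vec{\mathcal{C}}}$ over $V[G]$, so that $T_{\vec{\mathcal{C}}}$ is genuinely a wide Aronszajn tree of height-cofinality $\omega_1$ in $V[G][K_\M]$ and $\B(T_{\vec{\mathcal{C}}})$ functions as a ccc specializing forcing there. This is the Miller-style analogue of \autoref{mini-approx-thm-laver}: any cofinal branch of $T_{\vec{\mathcal{C}}}$ in $V[G][K_\M]$ restricts, along any cofinal $\omega_1$-chain of levels, to a cofinal $\omega_1$-sequence into $\aleph_{\omega+1}^{V[G]}$ all of whose proper initial segments live in $V[G]$, which the fusion-and-game argument of \autoref{namba-approx-sec} --- adapted to the cofinally-splitting ideal of $\M$, whose $\aleph_2$-completeness is ample for the $\omega_1$-indexed construction --- rules out. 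The other preservation checks of the lift (semiproperness of the iterands and preservation of stationary subsets of $\omega_1$) proceed exactly as in the proof of \autoref{maintheorem}.
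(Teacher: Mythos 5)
There are two genuine gaps, both in the step you yourself flag as the crux. First, Baumgartner's forcing cannot be applied to $T_{\vec{\mathcal{C}}}$ as you define it: that tree has height $\aleph_{\omega+1}$ and already contains uncountable chains in $V[G]$ (take any $\alpha$ of cofinality $\omega_1$ and any $C\in\mathcal{C}_\alpha$; the nodes $(C\cap\beta,\beta)$ for $\beta\in\lim C$ form a chain of order type $\ge\omega_1$ by coherence), so no specializing function into $\omega$ can exist and $\B(T_{\vec{\mathcal{C}}})$ is not a ccc specializing forcing --- the final sentence ``$\B(T_{\vec{\mathcal{C}}})$ has specialized $T_{\vec{\mathcal{C}}}$'' is vacuous. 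What can be specialized is the restriction of $T_{\vec{\mathcal{C}}}$ to a club $D\subseteq\nu$ of order type $\omega_1$, chosen only once $\cf(\nu)=\omega_1$; and since you have dropped $\dot{\G}$, you cannot invoke \autoref{cummings-collapse} to get this after $\dot{\M}$ alone, so the restriction (and hence the Baumgartner step) must come after the $\Col(\aleph_1,\chi)$ component, not before it as in your iterand $\dot\S\ast\dot\M\ast\dot\B\ast\dot\Col$. Second, your proposed proof that no new cofinal branch (equivalently, no new thread) appears does not work: restricting a branch ``along a cofinal $\omega_1$-chain of levels'' does not produce an object all of whose proper initial segments lie in $V[G]$, because the $\omega_1$-sequence of levels is itself chosen in the extension and its countable initial segments need not be in $V[G]$; so an $\omega_1$-length approximation property in the style of \autoref{mini-approx-thm-laver} for $\M$ simply does not apply. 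What is actually needed is the full-length statement: over $V[G]$, after $\M$ followed by a countably closed forcing, any unbounded $X\subseteq\aleph_{\omega+1}^{V[G]}$ all of whose bounded initial segments lie in $V[G]$ is itself in $V[G]$. This does apply to a thread $E$, since $E\cap\delta=(E\cap\beta)\cap\delta$ for any $\beta\in\lim E$ above $\delta$ and $E\cap\beta\in\mathcal{C}_\beta\subseteq V[G]$. That statement is the paper's \autoref{long-approx-thm}, and its proof is not a routine adaptation of the Section~2 fusion-and-game argument: it uses a different device (the Kurepa-style width estimate of \autoref{kurepa-claim}) and a fusion with no game at all.

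For comparison, the paper's own route avoids the specialization step entirely: it proves \autoref{long-approx-thm}, uses the lift to produce stationarily many cofinally weakly $\aleph_{\omega+1}$-guessing models, and derives the failure of $\square(\aleph_{\omega+1},\aleph_1)$ from those via \autoref{weak-guessing-square}; indeed in the iteration for \autoref{little-theorem} the Baumgartner iterand is dropped. Your more classical ``specialize the tree of partial threads and reflect with a supercompact'' architecture is a genuinely different decomposition and could plausibly be made to work, but only after the two repairs above: specialize $T_{\vec{\mathcal{C}}}\rest D$ after the collapse, and justify that it is Aronszajn there by the long approximation property rather than by an $\omega_1$-approximation argument.
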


The point we are making is that we obtain a failure of a variant of the square principle that probably cannot be extracted from the failure of simultaneous reflection. Work of Hayut and Lambie-Hanson shows that (in particular) $\square(\aleph_{\omega+1},\aleph_1)$ is consistent with simultaneous reflection for $\aleph_0$-many subsets of $\aleph_{\omega+1}$ \cite[Theorem 4.11]{Hayut-LambieHanson2017}. Moreover, we make use of an approximation property that was not employed in the earlier papers. One could also make $\square_{\aleph_{\omega}}^*$ fail, as Krueger does \cite{Krueger2013}.

\subsubsection{More Approximation}

\begin{theorem}\label{long-approx-thm} Let $\dot{\U}$ be a $\M$-name for a countably closed forcing. Then if
\[
\Vdash_{\M \ast \dot{\U}} \textup{``}\dot{X}\textup{ is unbounded in }\aleph_{\omega+1}^V \textup{ and }\dot{X} \notin V\textup{''},
\]
then $\M \ast \dot{\U}$ forces that there is some $\delta<\aleph_{\omega+1}^V$ such that $\dot{X} \cap  \delta  \notin V$.\end{theorem}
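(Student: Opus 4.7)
The plan is to mirror the proof of Theorem~\ref{mini-approx-thm-laver}, adapting fusion, decision lemmas, and the game argument from the Laver-style $\bL$ to the Miller-style $\M$. Suppose toward contradiction that some $(p_0, \dot c_0) \in \M \ast \dot{\U}$ forces that $\dot X$ is unbounded in $\aleph_{\omega+1}^V$, that $\dot X \notin V$, and yet that $\dot X \cap \delta \in V$ for every $\delta < \aleph_{\omega+1}^V$.

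First I would set up Miller-style fusion for $\M$: define $q \le_n p$ to mean $q \le p$ together with agreement between $q$ and $p$ on the first $n$ levels of splitting nodes, and check that fusion limits are conditions. Then I would establish the corresponding strong decision and Prikry-type lemmas (analogs of Propositions~\ref{cummings-magidor-strong-decision-cdot} and~\ref{cummings-magidor-prikry-cdot}) for $\M \ast \dot{\U}$. The argument is essentially as before: call a splitting node \emph{bad} if no direct extension decides the relevant name; show via regularity of the splitting cardinal $\aleph_{d(|t|)}$ that $\{\alpha \in \osucc_q(t) : t \concat \la \alpha \ra \text{ is bad}\}$ is unbounded in $\aleph_{d(|t|)}$ whenever $t$ is bad; and then glue across an unbounded collection to produce the bad-everywhere pure extension that contradicts the decidability of the name.

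Next I would introduce the analog of $\varphi$ from the proof of Theorem~\ref{mini-approx-thm-laver}: given $(q, \dot d)$ and a splitting node $t \in q$, the configuration is \emph{good} if there is an assignment of pairwise disjoint sets $A_\alpha$ of possible values for $\dot X \cap \delta_\alpha$ (for suitable $\delta_\alpha < \aleph_{\omega+1}^V$) to the elements $\alpha \in \osucc_q(t)$ such that $(q \rest (t \concat \la \alpha \ra), \dot d) \Vdash \dot X \cap \delta_\alpha \in A_\alpha$. Following Claim~\ref{basic-claim}, I would process $\osucc_q(t)$ one ordinal at a time, using the hypothesis that $\dot X$ is unbounded in $\aleph_{\omega+1}^V$ to forbid all future initial segments from being trapped in $\bigcup_{\beta < \alpha} A_\beta$; this allows a fresh $\delta_\alpha$ and disjoint $A_\alpha$ at each stage, and the unboundedness of the surviving stationary-/unbounded-in-$\aleph_{d(|t|)}$ set of $\alpha$'s yields the refined good configuration.

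Then I would run the Namba game $\mathcal{G}_k$ exactly as in the sketch of Claim~\ref{game-claim}: Gale--Stewart plus an elementary-submodel diagonalization give Player~\textsf{II} a winning strategy for some $k < \omega_1$. Running a fusion guided by this strategy yields $(q, \dot d)$ with the property that any $(r, \dot e) \le (q, \dot d)$ deciding $\dot X \cap \delta_k$ encodes, through the disjointness of the $A_\alpha$'s at each splitting level of $r$, a cofinal branch through $r$; but that branch is the $\M$-generic, contradicting the assumption that the generic is not in $V$. The main obstacle will be the game-theoretic step: unlike in $\bL$, where fusion is indexed by extensions of a single stem, Player~\textsf{I}'s "small" move in $\M$ must be bounded subsets attached to each of the splitting nodes being processed, and one must verify that the union of $\aleph_1$-many bounded subsets of $\aleph_{d(n)}$ remains bounded. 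This uses regularity of $\aleph_{d(n)}$ for $d(n) \ge 2$, and makes the elementary-submodel diagonalization against Player~\textsf{I}'s supposed winning strategies go through just as in the Laver case.
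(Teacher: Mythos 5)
There is a genuine gap, and it sits exactly where your proposal diverges from the Laver argument. You allow the ordinal $\delta_\alpha$ to vary from successor to successor of a splitting node $t$, asking only that the value-sets $A_\alpha$ (of possible traces $\dot X \cap \delta_\alpha$) be pairwise disjoint. This breaks the final coding step: if the generic branch passes through $\alpha$, disjointness of $A_\alpha$ and $A_{\alpha'}$ does not prevent $X \cap \delta_{\alpha'}$ from lying in $A_{\alpha'}$ for some other successor $\alpha'$, since $X \cap \delta_\alpha$ and $X \cap \delta_{\alpha'}$ are different sets when $\delta_\alpha \neq \delta_{\alpha'}$; so the decided initial segment no longer identifies a unique successor and the branch cannot be reconstructed from $\dot X \cap \delta$. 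In the Laver proof you are imitating, a \emph{single} ordinal $i$ is attached to each splitting node, and in Claim \autoref{basic-claim} the per-successor ordinals $i_\alpha<\omega_1$ are afterwards stabilized to one value by completeness of the splitting ideal. That stabilization is unavailable here: the ordinals $\delta_\alpha$ range over $\aleph_{\omega+1}$, which is far larger than the splitting cardinals $\aleph_{d(n)}$, so no pigeonhole or pressing-down over $\osucc_q(t)$ can produce a common $\delta$. Your "fresh $\delta_\alpha$ at each stage" is precisely an attempt to dodge this obstruction, but it destroys the uniqueness that the coding needs.

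The paper's proof supplies the missing ingredient with a Kurepa-style tree argument (\autoref{kurepa-claim}): for the tree of possible values of $\dot X \cap \delta$ below a condition, the levels have width $>\kappa$ for club-many $\delta$ of cofinality $\kappa^+$ (here one uses that $\dot X$ is forced to be new and unbounded, via Fodor's Lemma twice). One then fixes, \emph{in advance}, a single $\delta$ in the intersection of these clubs over all $\alpha \in \osucc_p(\stem p)$ (possible because $|\osucc_p(\stem p)|<\aleph_\omega<\aleph_{\omega+1}$), and at that one $\delta$ each successor has at least $\aleph_\omega$-many distinct possible exact values of $\dot X \cap \delta$, so pairwise \emph{distinct} values $a_\alpha$ can be chosen avoiding the fewer-than-$\aleph_\omega$-many already used (\autoref{basic-claim-miller}). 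Note also that your emphasis on the game and on pure-decision/Prikry lemmas is misplaced for this theorem: since $\dot X \cap \delta$ is (by the contradiction hypothesis) forced into $V$, exact values can be decided by ordinary extensions with no $\le_0$ machinery, and since each fusion level handles fewer than $\aleph_\omega$-many nodes while $\aleph_{\omega+1}$ is regular in $V$, one simply takes $\gamma_{n+1}$ above the supremum of the $\gamma_t$'s and $\delta=\sup_n\gamma_n<\aleph_{\omega+1}$; no analogue of \autoref{game-claim} is needed. So the obstacle you flag as the main one dissolves, while the real obstacle --- obtaining a common $\delta$ with many distinct decided values at each splitting node --- is not addressed by your argument.
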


\begin{proof} Let $\nu = \aleph_{\omega+1}^V$. Suppose for contradiction that $\dot{X}$ is a $\M$-name for a new cofinal subset of $\nu$, all of whose initial segements are in $V$.

Let $\varphi(\delta,q,\dot{d})$ denote the formula
\begin{align*}
\delta< \nu \wedge & (q,\dot{d}) \in \M \ast \dot{\U} \wedge \exists \seq{a_\alpha}{\alpha \in \osucc_q(\stem(q))} \su \\
&\forall \alpha \in \osucc_q(\stem(q)),(q \rest (\stem(q) {}^\frown \langle \alpha \rangle),\dot{d}) \Vdash ``\dot{X} \cap \delta = a_\alpha\textup{''} \wedge \\
& \forall \alpha, \beta \in \osucc_q(\stem(q)), \alpha \ne \beta \then a_\alpha \ne a_\beta.
\end{align*}

\begin{claim}\label{kurepa-claim} Let $\kappa$ be a cardinal and let $\lambda$ be a regular cardinal such that $\kappa^+ < \lambda$. Define
\[
T= \{t \in {}^{<\lambda}2: \exists q \le p , q \Vdash \textup{``}\dot{X} \cap \dom(t) = t \textup{''}\}.
\]
Then for all $p \in \M$, there is a club $C \subseteq \lambda$ such that the level $T_\delta$ has width $>\kappa$ for all $\delta \in C \cap \cof(\kappa^+)$.\end{claim}

\begin{proof} Let $T$ be the tree defined in the statement of the proposition. Suppose contrapositively that there is a stationary subset $S \subseteq \lambda \cap \cof(\kappa^+)$ such that for all $\delta \in S$, $|T_\delta| \le \kappa$. By the proof of a theorem of Kurepa \cite[Lemma 2.7]{Todorcevic-HOST}, it must be the case that for all $\delta \in S$ and $t \in T_\delta$, there is some $t' \sqsubseteq t$ such that $\dom(t') < \dom(t)$ and such that $t'$ has a unique descendant in $T_\delta$.

Now define a regressive function $h_0:S \to \lambda$ such that for all $\delta \in S$, there is some $t \in T_\delta$ and some $t' \sqsubseteq t$ with $\dom(t') = h_0(\delta)<\delta$. By Fodor's Lemma, there is some $S' \subseteq S$ and some $\gamma$ such that for all $\delta \in S'$, there is some $t \in T_\delta$ with a predecessor on level $\gamma$ with only $t$ as a successor in $T_\delta$. By considering limit points of $S$, we can assume that the predecessor is on a level $\gamma$ with $\gamma \in S$, and hence that there are at most $\kappa$-many to choose from. Hence we can apply Fodor again to find some $S'' \subseteq S'$ and some $\bar{t}$ such that for all $\delta \in S''$, there is some $t \in T_\delta$ such that $\bar{t} \sqsubseteq t$ and $t$ is the unique descendant of $\bar{t}$ in $T_\delta$.

Let $\bar{\delta}=\dom(\bar{t})$ and let $(q,\dot{d}) \le (p,\dot{c})$ decide $\bar{t} = \dot{X} \cap \bar{\delta}$. Since $\bar{t} \in T$, $(q,\dot{d}) \Vdash \textup{``} \bar{t}= \dot{X} \cap \bar{\delta}\textup{''}$. Let $t_\delta$ be the unique descendant of $\bar{t}$ in $T_\delta$. For all $\gamma<\lambda$, the fact that $\dot{X}$ is forced to be unbounded unbounded implies that $(q,\dot{d}) \Vdash \textup{``}\exists \delta \in (\gamma,\lambda),\dot{X} \cap (\gamma,\lambda) \ne \emptyset\textup{''}$. Therefore for all $\gamma<\lambda$, there is some $\delta \in (\gamma,\lambda)$ such that $\sup t_\delta \in (\gamma,\lambda)$ by uniqueness of $t_\delta$. Otherwise, $(q,\dot{d})$ would force that $\dot{X}$ is bounded in $\lambda$. Moreover, for these values $\gamma,\delta$ we have $(q,\dot{d}) \Vdash \textup{``}\dot{X} \cap \gamma = t_\delta \cap \gamma\textup{''}$. Hence $(q,\dot{d}) \Vdash \textup{``}\dot{X} = \bigcup_{\delta \in S''}t_\delta\textup{''}$. This contradicts the fact that $\dot{X}$ is forced to be new.\end{proof}

\begin{claim}\label{basic-claim-miller}  $\forall  \gamma < \nu,(p,\dot{c}) \in \M$, there is some $\delta \in (\gamma,\nu)$, and some $(q,\dot{d}) \le (p,\dot{c})$ with $\stem q = \stem p$ such that $\varphi(\delta,p,\dot{c})$ holds.\end{claim}

\begin{proof} Let $W = \osucc_p(\stem(p))$.

For each $\alpha \in W$, let
\[
T_\alpha = \{t \in {}^{<\lambda}2: \exists (q,\dot{d}) \le (p \rest (\stem(p) {}^\frown \langle \alpha \rangle),\dot{c}), (q,\dot{d}) \Vdash \textup{``}\dot{X} \cap \dom(t) = t\textup{''}\}.
\]
Let $C_\alpha$ be the set of $\delta$ such that $|T_\alpha| \ge \aleph_\omega$. Then $C_\alpha$ is a club by \autoref{kurepa-claim}. Let $C = \bigcap_{\alpha \in W}C_\alpha$. Fix some $\delta \in (C \cap \cof(|W|^+)) \setminus (\gamma+1)$.

By induction on $\alpha \in W$ we will define a sequence of conditions, $\seq{(q_\alpha,\dot{d}_\alpha)}{\alpha \in W}$ and the distinct sets $\seq{a_\alpha}{\alpha \in W}$.

Choose $(q_\alpha,\dot{d}_\alpha) \le (p \rest (\stem p {}^\frown \langle \alpha \rangle),\dot{c})$ forcing $\textup{``}\dot{X} \cap \delta = a_0\textup{''}$ for some $a_0$.

Now suppose that the members of our sequences have been defined for $\beta \in W \cap \alpha$. Let $B = \{a_\beta:\beta \in W \cap \alpha\}$, which is in particular of cardinality strictly less than $|W|$.

We do not have $ (p \rest t \concat \la \alpha \ra,\dot{c}) \Vdash \textup{``} \dot{X} \cap \delta \in B \textup{''}$, because this would contradict the fact that $\delta \in C_\alpha$. Therefore there is some $(q_\alpha,\dot{d}_\alpha) \le (p,\dot{c})$ and some $a_\alpha \notin B$ such that $(q_\alpha,\dot{d}_\alpha) \Vdash \textup{``}\dot{X} \cap \delta = a_\alpha\textup{''}$.

Then let $q = \bigcup_{\alpha \in W}q_\alpha$. Let $\dot{d}$ be the gluing of the $\dot{d}_\alpha$'s below $q_\alpha$.\end{proof}

Now we will build a condition $(q,\dot{d}) \in \M \ast \dot{\U}$ and an ordinal $\delta$ by a fusion process in such a way that any stronger condition deciding $\dot{X} \rest \delta$ will also code the generic sequence for $\M$.

Define a sequence $\seq{(p_n,\dot{c}_n)}{n<\omega}$ of conditions and a sequence $\seq{\gamma_n}{n<\omega}$ of ordinals in $\aleph_{\omega+1}$. 

The construction is defined as follows: Let $(p_0,\dot{c}_0)$ be obtained by applying \autoref{basic-claim-miller} to $(p,\dot{c})$.

In general that $(p_n,\dot{c}_n)$ and $\gamma_n$ have been defined. Consider all nodes $t$ of the $n\th$ splitting level of $p_n$. Then apply \autoref{basic-claim-miller} to $(p_n,\dot{c}_n)$ and $\gamma_n$ to obtain $(q_t,\dot{c}_t)$ and some $\gamma_t$. Let $p_{n+1} = \bigcup q_t$ and let $\dot{c}_{n+1}$ be the gluing of the $\dot{c}_t$'s. Let $\gamma_{n+1}$ be above the supremum of the $\gamma_t$'s.

At the end we let $q = \bigcap_{n<\omega}p_n$ be the fusion limit, we let $\dot{d}$ be the name forced to be a lower bound of the $\dot{c}_n$'s, and we let $\delta = \sup_{n<\omega}\gamma_n$.

Then $(q,\dot{d})$ forces that the generic sequence for $\M$ can be recovered from the forced value of $\dot{X} \rest \delta$ as follows: At the $0\th$ level, we choose a node based on the forced value for $\gamma_0$. Choose a $\sqsubseteq$-increasing sequences nodes inductively so that at step $n$, one is at the $n\th$ splitting level and checks the node corresponding to the forced value $\dot{X} \cap \gamma_n$.

Hence $(q,\dot{d}) \Vdash ``\dot{X} \rest \delta \notin V\textup{''}$ or else we obtain the contradiction from the previous paragraph. This contradicts the premise from the beginning of the proof that initial segments of $\dot{X}$ are in $V$.\end{proof}

\subsubsection{Relationship to Squares}

\begin{definition} Let $N$ be a set such that $\aleph_1 \subseteq N$ and assume that $\kappa$ is a regular cardinal. We say that $N$ is \emph{cofinally weakly $\kappa$-guessing} if for all unbounded $X \subseteq N \cap \kappa$ such that $X \cap \gamma \in N$ for $\gamma \in N \cap \kappa$, it follows that there is some $Y \in N$ such that $Y \cap (\sup(N \cap \kappa)) = X$.\end{definition}

\begin{proposition}\label{weak-guessing-square} Suppose that $\lambda$ is regular and $\theta > \lambda$. Suppose that there are stationarily-many $N \prec H(\theta)$ with $\cf(N \cap \lambda)=\aleph_1=|N|$ that cofinally weakly $\lambda$-guessing. Then $\square(\lambda,\omega_1)$ fails.\end{proposition}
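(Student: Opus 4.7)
The plan is to assume $\square(\lambda, \omega_1)$ holds, witnessed by a sequence $\vec{\mathcal{C}} = \seq{\mathcal{C}_\alpha}{\alpha < \lambda}$ (with the standard width bound $|\mathcal{C}_\alpha| \le \aleph_1$), and derive a contradiction using the hypothesis. Choose $\theta$ large enough that $\vec{\mathcal{C}} \in H(\theta)$ and apply the stationarity hypothesis to obtain $N \prec H(\theta)$ of size $\aleph_1$, cofinally weakly $\lambda$-guessing, with $\vec{\mathcal{C}} \in N$ and $\cf(N \cap \lambda) = \aleph_1$. Set $\delta = \sup(N \cap \lambda)$, which is strictly below $\lambda$ and has $\cf(\delta) = \aleph_1$; note in particular $\delta \notin N$, since otherwise $N \cap \lambda$ would have a maximum. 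Fix any $C \in \mathcal{C}_\delta$.

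The crux is to show $C \subseteq N$, so that $X := C$ becomes admissible input to the guessing property. Since $\cf(\delta) = \aleph_1 = |N|$, I would choose a continuous increasing cofinal sequence $\seq{\eta_i}{i < \omega_1} \subseteq N \cap \delta$, whose range $E$ is a club in $\delta$ contained in $N$. Since $\lim C$ is also a club in $\delta$ (using $\cf(\delta) = \aleph_1$), the intersection $\lim C \cap E$ is a club, so $\lim C \cap N$ is cofinal in $\delta$. Now for each $\gamma \in N \cap \lim C$, coherence of $\vec{\mathcal{C}}$ gives $C \cap \gamma \in \mathcal{C}_\gamma$, and since $\mathcal{C}_\gamma \in N$ has size at most $\aleph_1 \subseteq N$, both $C \cap \gamma \in N$ and $C \cap \gamma \subseteq N$. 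Taking the union over $\gamma \in N \cap \lim C$ yields $C \subseteq N$.

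With $C \subseteq N$ in hand, $X := C$ is unbounded in $\delta$ and contained in $N \cap \lambda$, and for any $\gamma \in N \cap \lambda$, picking $\gamma' \in N \cap \lim C$ above $\gamma$ gives $X \cap \gamma = (C \cap \gamma') \cap \gamma \in N$. Applying cofinal weak $\lambda$-guessing produces $Y \in N$ with $Y \cap \delta = X = C$; replacing $Y$ by $Y \cap \lambda \in N$, I may assume $Y \subseteq \lambda$. The final task is to argue that $Y$ is a club in $\lambda$ satisfying $Y \cap \alpha \in \mathcal{C}_\alpha$ for all $\alpha \in \lim Y$, which contradicts clause (3) of $\square(\lambda, \omega_1)$. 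This is an elementarity check: each of these statements is first-order with parameters $Y, \lambda, \vec{\mathcal{C}} \in N$, so it suffices to verify them at ordinals $\beta \in N \cap \lambda$, all of which satisfy $\beta < \delta$ and hence $Y \cap \beta = C \cap \beta$; then closure of $C$ in $\delta$ and coherence of $\vec{\mathcal{C}}$ through $C$ yield the required conditions.

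The main obstacle is establishing $C \subseteq N$ — specifically, ensuring $\lim C \cap N$ is cofinal in $\delta$ — which crucially relies on matching $\cf(N \cap \lambda) = \aleph_1$ with the width bound on the $\mathcal{C}_\alpha$; once this is done, the remaining verifications are routine elementarity arguments.
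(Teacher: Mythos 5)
Your proposal follows the same skeleton as the paper's proof: fix $N$ from the stationary set with $\vec{\mathcal{C}} \in N$, set $\delta = \sup(N \cap \lambda)$, use coherence together with $\mathcal{C}_\gamma \in N$, $|\mathcal{C}_\gamma| \le \aleph_1 \subseteq N$ to see that the initial segments $C \cap \gamma$ at points $\gamma \in N$ lie in $N$, feed this to the guessing hypothesis, and reflect the resulting thread statement through $N$ to contradict clause (3). The final elementarity check (unboundedness, closure, coherence verified at ordinals of $N \cap \lambda$, where $Y$ agrees with $C$) is fine. The genuine gap is exactly at the step you call the crux. From $\mathcal{C}_\gamma \subseteq N$ you correctly get $C \cap \gamma \in N$, but you then assert ``and $C \cap \gamma \subseteq N$'' and conclude $C \subseteq N$. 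That inference conflates membership with inclusion one level down: $C \cap \gamma \in N$ yields $C \cap \gamma \subseteq N$ only when $|C \cap \gamma| \le \aleph_1$ (so that a surjection from $\omega_1$ onto it inside $N$ has range inside $N$), and $\square(\lambda,\omega_1)$ gives no such bound --- the $\omega_1$ bounds the number of clubs at each level, not their order types. If $\ot(C) \ge \omega_2$, then $C \subseteq N$ is outright impossible since $|N| = \aleph_1$. Because your application of cofinal weak guessing is routed entirely through ``$X := C \subseteq N$,'' the argument as written only handles square sequences whose clubs at $\omega_1$-cofinal points have order type $\le \omega_1$, which cannot be assumed.

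For comparison, the paper never claims $C \subseteq N$: it applies the guessing hypothesis to $D \in \mathcal{C}_\delta$ on the strength of $D$ being unbounded in $\delta$ with $D \cap \gamma \in N$ for every $\gamma \in N \cap \lambda$, the cofinally many coherence points being supplied by the cited fact that $N \cap \delta$ contains a club of $\delta$ (Cummings--Foreman--Magidor). Note that this last fact is also what your ``continuous increasing cofinal sequence $\seq{\eta_i}{i<\omega_1} \subseteq N \cap \delta$'' amounts to; it does not follow merely from $\cf(\delta) = |N| = \aleph_1$, but uses $\aleph_1 \subseteq N$ and elementarity, and should be cited or proved rather than asserted. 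So either argue as the paper does, applying the guessing property to $C$ itself via its initial segments, or, if you insist on literally securing the side condition $X \subseteq N \cap \lambda$, you need an additional idea to deal with clubs of order type greater than $\omega_1$; your proposal does neither, so the proof is incomplete at precisely the point you flagged as the main obstacle.
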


\begin{proof}(See [Cox-Krueger, \cite{Cox-Krueger2018}].) Suppose for contradiction that we have $\vec{\mathcal{C}}=\seq{\mathcal{C}_\alpha}{\alpha<\lambda}$, a $\square(\lambda,\omega_1)$-sequence. Consider the structure $\mathcal{A} = (H(\theta),\in,<_\theta,\vec{\mathcal{C}})$. Let $N \prec \mathcal{A}$ be as in the hypothesis of the proposition. Let $\delta = \sup(N \cap \lambda)$. 

Take any $D \in \mathcal{C}_\delta$. By standard arguments, the fact that $\cf(N \cap \lambda)$ is uncountable implies that $N \cap \delta$ contains a club in $\delta$. (See e.g$.$ \cite[Lemma 4.3]{Cummings-Foreman-Magidor2004}.) Therefore $D \cap N \cap \delta$ is a club. Take some $\gamma \in D \cap N \cap \delta$. Then $D \cap \gamma \in \mathcal{C}_\gamma$, and since $\aleph_1 \subseteq N$, we have $\mathcal{C}_\gamma \subseteq N$, so therefore $D \cap \gamma \in N$. By unboundedness it follows that this will be true for any $\gamma \in N \cap \lambda$.

By the fact that $N$ is cofinally weakly $\lambda$-guessing, there is some $E \in N$ such that $E \cap \sup(N \cap \lambda) = D$. This implies that $N$ thinks that $E$ is a thread of the $\square(\lambda,\omega_1)$-sequence, which is a contradiction.\end{proof}

\subsubsection{Sketching the Rest of the Argument} The construction for \autoref{little-theorem} is very similar to that of \autoref{maintheorem} with four prominent differences: First, since we are not attempting to get all scales on $\aleph_\omega$ to be good, we will not use $\G(\prod_{2 \le n <\omega}\aleph_n)$, which makes the construction strictly easier. Second, we want to refer to models in $H(\theta)$ for $\theta \ge \aleph_{\omega+2}$ so that they may contain unbounded subsets of $\aleph_{\omega+1}$ as elements. Third, for similar reasons we are using a different sort of guessing---cofinal weak guessing for $\aleph_{\omega+1}$ rather than weak $(\omega_1,\aleph_{\omega+1})$-guessing. Fourth, we no longer need worry about the branches of the analog of $T_{\textup{IA}}$.

Let $g$ be $\S$-generic and let $k$ be $ \dot{\M}[g]$-generic over $V[g]$. Let $T_{\textup{IA}^+}$ be a the analog of $T_{\textup{IA}}$ for $H(\nu)^{V[g]} \cap ((H(\nu)^{V[g]})^{<\aleph_{\omega+1}})$ where the tree order is determined by end extension.

We start in a ground model $V$ in which $\kappa$ is a supercompact cardinal and fix a Laver supercompact guessing function $\ell:\kappa \to V_\kappa$.

We define a revised countable support iteration $\mathbb{I} = \seq{\mathbb{I}_\alpha,\dot{\mathbb{J}}_\alpha}{\alpha<\kappa}$ as follows:

\begin{enumerate}

\item Suppose $\alpha$ is inaccessible and that $\ell(\alpha)$ is an $\mathbb{I}_\alpha$-name for a poset of the form
\[
\dot{\S} \ast \dot{\M} \ast \dot{\B}(T_{\textup{IA}^+}) \ast \dot{\Col}(\aleph_1,\chi)
\]
where $T_{\textup{IA}^+}$ indicates the wide Aronszajn tree discussed above and $\chi$ is some regular cardinal. Then let $\dot{\mathbb{J}}_\alpha= \dot{\S} \ast \dot{\M} \ast \dot{\Col}(\aleph_1,\chi)$.

\item If $\alpha$ is inaccessible and $\ell(\alpha)$ is an $\mathbb{I}_\alpha$-name for $\Col(\aleph_1,\chi)$ for some $\chi<\kappa$, then let $\dot{\mathbb{J}}_\alpha$ be a name for $\Col(\aleph_1,\chi)$.

\item Otherwise $\dot{\mathbb{J}}_\alpha$ is a name for the trivial poset.

\end{enumerate}

As before we fix $\nu = \aleph_{\omega+1}^{V[G]}$.

The target model $V[G]$ will be a forcing extension by $\mathbb{I} \ast \dot{\S}$. To prove that $V[G] \models \neg \square(\aleph_{\omega+1},\aleph_1)$, we would argue for:

\begin{lemma}\label{guessing-lemma2} In $V[G]$, for all $\theta \ge \aleph_{\omega+1}$, there are stationarily-many $N \prec H(\theta)$ such that:
\begin{enumerate}
\item $\cf(N \cap \aleph_{\omega+1})=\aleph_1$,
\item $N$ is cofinally weakly $\aleph_{\omega+1}$-guessing.
\end{enumerate}\end{lemma}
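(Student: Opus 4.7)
The plan is to adapt the proof of \autoref{guessing-lemma}, replacing $\bL$ with $\M$ and invoking \autoref{long-approx-thm} in place of \autoref{mini-approx-thm-laver}. Set $\nu = \aleph_{\omega+1}^{V[G]}$, fix $\theta \ge \aleph_{\omega+2}$ (the case $\theta = \aleph_{\omega+1}$ being vacuous, since $H(\nu)^{V[G]}$ contains no unbounded subsets of $\nu$), and let $N_0 = H(\theta)^{V[G]}$. I would use the Laver function to obtain a $\nu$-supercompact embedding $j:V \to M$ with $\crit(j) = \kappa$, $j(\kappa) > \nu$, $M^\nu \subseteq M$, and $j(\ell)(\kappa)$ equal to the $\mathbb{I}_\kappa$-name for $\dot{\S} \ast \dot{\M} \ast \dot{\B}(T_{\textup{IA}^+}) \ast \dot{\Col}(\aleph_1,\chi)$, and let $\rho := \sup j[\nu]$.

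The lift of $j$ to $j:V[G] \to M[j(G)] = M[G][K]$ proceeds in stages: successive generics for $\M$, $\Col(\aleph_1,\chi)$, the tail $\E$, and a master condition for $j(\S)$ constructed pointwise as in \autoref{lifting-claim}, using that $\M$ singularizes each $\aleph_{n+1}^V$ to cofinality $\omega$ so the $\S_{\aleph_n}$-coordinates admit trivial coherence at $\gamma_n := \sup j[\aleph_{n+1}^V]$. The resulting quotient from $V[G]$ to $M[j(G)]$ has the form $\M \ast \dot{\U}$, where $\dot{\U}$ is countably closed, so \autoref{long-approx-thm} applies to it.

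Setting $N := j[N_0]$, the verification that $N \prec j(N_0)$, that $\aleph_1 \subseteq N$, that $|N| = \aleph_1$, and that $\cf(N \cap j(\nu)) = \cf(\rho) = \aleph_1$ is identical to the corresponding clauses of \autoref{contradiction-claim}, giving clause~(1) of the lemma. For clause~(2), suppose $X \in M[j(G)]$ is unbounded in $N \cap j(\nu) = j[\nu]$ with $X \cap \gamma \in N$ for every $\gamma \in N \cap j(\nu)$. Define $X' := \{\alpha < \nu : j(\alpha) \in X\}$ and, for each $\alpha < \nu$, write $X \cap j(\alpha) = j(Z_\alpha)$ with $Z_\alpha \in N_0$. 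The containment $j(Z_\alpha) \subseteq X \subseteq j[\nu]$ combined with elementarity forces $j(Z_\alpha) = j[Z_\alpha]$, and hence $Z_\alpha = X' \cap \alpha \in V[G]$. Applying \autoref{long-approx-thm} to $\M \ast \dot{\U}$ then yields $X' \in V[G]$; since $|X'| \le \aleph_{\omega+1} < \aleph_{\omega+2} \le \theta$ we have $X' \in N_0$, so $Y := j(X') \in N$ and the same pointwise computation gives $Y \cap \rho = j[X'] = X$. Stationarity in $V[G]$ follows by elementarity, exactly as in the concluding paragraph of the proof of \autoref{guessing-lemma}: for any club $C \subseteq [H(\theta)]^{\aleph_1}$ in $V[G]$, $j[C]$ is directed with $\bigcup j[C] = j[N_0]$, so $j[N_0] \in j(C)$.

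The main obstacle is to make sure that the Baumgartner-style component of the Laver target $j(\ell)(\kappa)$ does not spoil the countable closure of the quotient. Since the iterand in case~(1) of $\mathbb{I}$ for the present construction excludes $\B(T_{\textup{IA}^+})$, this is arranged simply by not including a $\B$-generic in $K$ when performing the lift that witnesses the approximation hypothesis; the auxiliary tree $T_{\textup{IA}^+}$ plays no role in clause~(2) and would only intervene in a separate argument tracking threads, which is not needed for cofinal guessing as observed in the sketch preceding the lemma.
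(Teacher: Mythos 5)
Your overall architecture matches the paper's intended argument: lift $j$ through the iteration, get an approximation property for the quotient over $V[G]$ (the analogue of Condition (3) of \autoref{2lifting-claim}), pull an unbounded $X \subseteq j[\nu]$ with initial segments in $j[N_0]$ back to $X' \subseteq \nu$ with initial segments in $V[G]$, conclude $X' \in V[G]$, and guess it by $j(X')$; and your choice $N_0 = H(\theta)^{V[G]}$ with $\theta \ge \aleph_{\omega+2}$ is exactly the adjustment the paper signals is needed so that $X'$ can be an element of $N_0$. The pointwise computations ($j(Z_\alpha)=j[Z_\alpha]$, $j(X')\cap\rho = X$) are fine.

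The genuine gap is your claim that ``the resulting quotient from $V[G]$ to $M[j(G)]$ has the form $\M \ast \dot{\U}$ where $\dot{\U}$ is countably closed, so \autoref{long-approx-thm} applies.'' The generic $K$ must include a generic for the entire remainder $\dot{\E}$ of $j(\mathbb{I})$ beyond stage $\kappa$ (otherwise $j$ does not lift to $V[G_{\mathbb{I}}]$ at all), and this RCS tail is only semiproper: at later inaccessible stages it contains further blocks $\dot{\S}\ast\dot{\M}\ast\dot{\Col}(\aleph_1,\chi)$, and the Namba-style factors $\dot{\M}$ are not countably closed (they add $\omega$-sequences and change cofinalities). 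So $\dot{\U}$ is not countably closed, and \autoref{long-approx-thm} does not apply to the full quotient as you use it. This matters because the guessing property of $j[N_0]$ must hold in $M[j(G)]$ (that is what transfers by elementarity), so the $X$'s you must handle include sets added by the tail; a semiproper, $\aleph_1$-preserving forcing can in general add a new cofinal branch through the tree of $V[G]$-initial segments (a tree of size $\aleph_1$ and height of cofinality $\omega_1$ after the collapse), i.e., a new unbounded $X'\subseteq\nu$ all of whose proper initial segments lie in $V[G]$. In the main theorem this is precisely the problem that the Baumgartner specialization of $T_\textup{IA}$ was introduced to solve (there, Condition (3) of \autoref{lifting-claim} is reduced to $\aleph_1$-preservation via the special tree, not obtained by applying the approximation theorem to the whole quotient). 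Your closing paragraph inverts this: the ``obstacle'' is not that $\B(T_{\textup{IA}^+})$ might spoil countable closure (it is never an iterand in this construction anyway), but that dropping any such device leaves you with no argument that the approximation property survives $\dot{\E}$ and the later quotients; simply ``not including a $\B$-generic in $K$'' does not address this. You need either a proof that the tail cannot add such an $X'$, or a replacement mechanism.

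Two smaller points: the case $\theta=\aleph_{\omega+1}$ is not vacuous (for $N\prec H(\aleph_{\omega+1})$ no $Y\in N$ can satisfy $Y\cap\sup(N\cap\nu)=X$ for unbounded $X$, so cofinal weak guessing there asserts the nonexistence of such $X$, a nontrivial statement), so you should either prove it or restrict the lemma; and with $N_0=H(\theta)^{V[G]}$ for $\theta\ge\aleph_{\omega+2}$ the closure $M^{\nu}\subseteq M$ is not enough to get $j[N_0]\in M[G][K]$ --- you should choose $j$ witnessing enough supercompactness for $|H(\theta)|$, which is available since $\kappa$ is fully supercompact in this section.
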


This would be obtained from the following claims, where we note that the last point of each is changed for \autoref{little-theorem}.

\begin{claim}\label{2lifting-claim} There is a forcing extension $V[G][K] \supset V[G]$ such that:

\begin{enumerate}

\item  $j:V \to M$ can be lifted to $j:V[G] \to M[j(G)]=M[G][K]$,

\item $V[G][K] \models \textup{``}|\aleph_{\omega+1}^{V[G]}|=|\aleph_1^{V[G]}|=\aleph_1\textup{''}$,

\item In $M[G][K]$, if $f:\nu \to \nu$ is unbounded in $\aleph_{\omega+1}^{V[G]}$ and $f \rest \delta \in V[G]$ for all $\delta < \nu$, then $f \in V[G]$.

\end{enumerate}\end{claim}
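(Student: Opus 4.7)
The plan is to follow the template of \autoref{lifting-claim} with two adjustments: the iteration here has no good-scale forcing $\dot{\G}$, so the master-condition analysis is simpler; but clause \emph{(3)} demands a stronger approximation than the $\omega_1$-version obtained there, so we appeal to \autoref{long-approx-thm} in place of the Baumgartner-tree argument of \autoref{baumgartner-claim}.

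Using $\nu$-supercompactness and the Laver function, select $j\colon V \to M$ with critical point $\kappa$, $M^\nu \subseteq M$, $j(\kappa) > \nu$, and $j(\ell)(\kappa)$ equal to the $\mathbb{I}$-name for $\dot{\S} \ast \dot{\M} \ast \dot{\B}(T_{\textup{IA}^+}) \ast \dot{\Col}(\aleph_1,\chi)$, so that by case \emph{(1)} of the definition of $\mathbb{I}$,
\[
j(\mathbb{I}) = \mathbb{I} \ast \dot{\S} \ast \dot{\M} \ast \dot{\Col}(\aleph_1,\chi) \ast \dot{\E}.
\]
First extend $j$ to $V[G_{\mathbb{I}}]$ by Silver's method, feeding the tail of $j(\mathbb{I})$ with $G_\S$ (already available), a fresh $\M$-generic $K_{\M}$ over $V[G]$, a $\Col(\aleph_1,\chi)$-generic $K_{\Col}$, and a generic $K_{\E}$ for $\dot{\E}$; then extend $j$ to $V[G]$ via the master-condition argument for $\S$ from \autoref{lifting-claim}, using the fact that $K_{\M}$ singularizes each $\gamma_n := \sup j[\aleph_n^V]$ to cofinality $\omega$ in order to build $\bar{s} \in j(\S)$, and picking $K_\S$ a $j(\S)$-generic containing $\bar{s}$. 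Setting $K = K_{\M} \ast K_{\Col} \ast K_{\E} \ast K_\S$ yields clause \emph{(1)}. For clause \emph{(2)}, $K_{\M}$ forces $|\aleph_n^V| = \aleph_1$ for all $n \ge 2$, the subsequent factors preserve $\aleph_1$, and then \autoref{cummings-collapse} applied to a good scale on $\aleph_\omega^{V[G]}$ (present in $V[G]$ by Shelah's ZFC theorem and the strategic closure of $\mathbb{I} \ast \dot{\S}$) yields $|\nu| = \aleph_1$ in $V[G][K]$.

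For clause \emph{(3)}, factor the extension $V[G] \subseteq M[G][K]$ as $\M \ast \dot{\U}$, where $\dot{\U}$ denotes the composition $\dot{\Col}(\aleph_1,\chi) \ast \dot{\E} \ast j(\S)/\la \bar{s} \ra$, and appeal to \autoref{long-approx-thm}. Given $f\colon \nu \to \nu$ unbounded in $\nu$ with $f \rest \delta \in V[G]$ for every $\delta < \nu$, encode the graph of $f$ as an unbounded $X \subseteq \nu$ via a continuous pairing function, so that every $X \cap \beta$ is determined by an initial segment $f \rest \delta \in V[G]$, and conclude from \autoref{long-approx-thm} that $X \in V[G]$, whence $f \in V[G]$. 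The hard part will be verifying that $\dot{\U}$ is genuinely countably closed as the hypothesis of \autoref{long-approx-thm} demands: the L{\'e}vy collapse and the master-condition quotient of $j(\S)$ are countably closed, but the tail $\dot{\E}$ may contain further Namba-style iterands at inaccessible stages above $\kappa$. The cleanest remedy is to mimic \autoref{lifting-claim} by interpolating an auxiliary Baumgartner specialization $\dot{\B}(T_{\textup{IA}^+})$ into $K$ just after $K_{\M}$, so that any putative new cofinal $f$ would produce a cofinal chain through a specialized wide Aronszajn tree, contradicting preservation of $\aleph_1$; alternatively one could arrange via Laver guessing that the tail of $j(\mathbb{I})$ past $\kappa$ contains only countably closed L{\'e}vy collapses.
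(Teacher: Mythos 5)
Your skeleton is the one the paper intends: \autoref{2lifting-claim} is only stated in the sketch of \autoref{little-theorem}, the understanding being that one repeats the proof of \autoref{lifting-claim} without the good-scale forcing, with $\M$ in place of $\bL$ and with \autoref{long-approx-thm} as the approximation input; your choice of $j(\ell)(\kappa)$, the Silver lift, the master condition for $j(\S)$ built from the $\omega$-cofinality of the $\gamma_n$'s after the Namba step, and the assembly of $K$ all match that template. One correction on clause \emph{(2)}: there is no ``Shelah ZFC theorem'' producing a \emph{good} scale --- ZFC gives scales and stationarily many good points, but a good scale (club-many good points) can consistently fail to exist, and the present construction deliberately omits $\G$, so \autoref{cummings-collapse} is not available to you. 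Fortunately it is not needed: exactly as in the proof of \autoref{lifting-claim}, the L{\'e}vy collapse factor $\Col(\aleph_1,\chi)$ with $\chi \ge \nu$, which you already placed in $K$, collapses $\nu$ to cardinality and cofinality $\aleph_1$, and the subsequent factors preserve $\aleph_1$.

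The genuine gap is the one you flag and then do not close: \autoref{long-approx-thm} needs everything beyond $\M$ to be (forced to be) countably closed, whereas the tail $\dot{\E}$ of $j(\mathbb{I})$ past $\kappa$ is an RCS iteration which, by elementarity, has iterands of the form $\dot{\S} \ast \dot{\M} \ast \dot{\Col}(\aleph_1,\chi)$ at inaccessible stages in $(\kappa,j(\kappa))$ where $j(\ell)$ guesses names of that shape; it is semiproper but not countably closed. Neither proposed remedy works. Specializing $T_{\textup{IA}^+}$ is not available: Baumgartner's forcing applies to wide Aronszajn trees of height $\omega_1$, while $T_{\textup{IA}^+}$ consists of chains of length $<\aleph_{\omega+1}$; more importantly, the specialization device of \autoref{baumgartner-claim} is tailored to the $\omega_1$-length \emph{non-existence} statement in clause \emph{(3)} of \autoref{lifting-claim}, whereas here clause \emph{(3)} asserts that every unbounded $f:\nu \to \nu$ with all proper initial segments in $V[G]$ lies \emph{in} $V[G]$; the tree of candidate initial segments already has many old cofinal branches (one for each such $f \in V[G]$), so it is not Aronszajn and cannot be specialized --- what must be excluded are \emph{new} branches added by the remaining factors. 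Your alternative of arranging by Laver guessing that the tail contains only countably closed collapses is not something you can arrange: $j(\ell)$ above $\kappa$ is fixed by elementarity, not chosen. So the proof of clause \emph{(3)} is incomplete as written: after applying \autoref{long-approx-thm} over $V[G]$ to $\M \ast \dot{\Col}(\aleph_1,\chi)$, you still must show that the approximation survives $\dot{\E} \ast j(\S)/\bar{s}$, for instance by re-indexing along an $\omega_1$-club of $\nu$ in the intermediate model and proving a suitable $\omega_1$-approximation property for the tail. This is exactly the step the paper's sketch also leaves implicit, so any complete write-up must supply it.
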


\begin{claim}\label{2contradiction-claim} If $N_0 = H(\nu)^{V[G]}$, then in $M[G][K]$, the following hold:
\begin{enumerate}
\item $|j[N_0]| = \aleph_1 \subseteq j[N_0]$,
\item $j[N_0] \prec j(N_0)$,
\item $\cf(j[N_0] \cap j(\nu)) = \omega_1$,
\item $j[N_0]$ is cofinally weakly $\aleph_{\omega+1}$-guessing.
\end{enumerate}
\end{claim}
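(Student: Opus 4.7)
The plan is to follow the template of Claim~\ref{contradiction-claim}, using the strengthened approximation property Claim~\ref{2lifting-claim}(3) in place of the weaker one from Claim~\ref{lifting-claim}(3).

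Points (1), (2), and (3) proceed essentially verbatim as in Claim~\ref{contradiction-claim}. For (1), Claim~\ref{2lifting-claim}(2) gives $|\nu|^{V[G][K]} = \aleph_1$, hence $|H(\nu)^{V[G]}|^{V[G][K]} = \aleph_1$, and $\aleph_1 \subseteq j[N_0]$ holds because $\crit(j) > \aleph_1$. Point (2) is a standard Tarski--Vaught argument exploiting elementarity of $j$. For (3), observe that $\sup(j[N_0] \cap j(\nu)) = \sup j[\nu] =: \rho$ has cofinality $\aleph_1$ in $M[G][K]$, since $j$ is an order isomorphism onto $j[\nu]$ and $\cf^{M[G][K]}(\nu) = \aleph_1$ by Claim~\ref{2lifting-claim}(3).

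Point (4) is the substantive new part. Suppose $X \subseteq j[N_0] \cap j(\nu) = j[\nu]$ is unbounded in $\rho$ and $X \cap \gamma \in j[N_0]$ for each $\gamma \in j[\nu]$; I will produce a $Y \in j[N_0]$ with $Y \cap \rho = X$. For each $\gamma' < \nu$, let $A_{\gamma'} \in N_0$ be the unique set with $j(A_{\gamma'}) = X \cap j(\gamma')$; by elementarity $A_{\gamma'} \subseteq \gamma'$. In $M[G][K]$---using that $j \rest \nu \in M$ by the $\nu$-closure of the embedding prior to lifting---define $Y^* = \{\eta < \nu : j(\eta) \in X\}$. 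Then $Y^* \cap \gamma' = A_{\gamma'}$ for every $\gamma' < \nu$, and since $X \subseteq j[\nu]$ is unbounded in $\rho$, $Y^*$ is unbounded in $\nu$. Define $f : \nu \to \nu$ in $M[G][K]$ by $f(\alpha) = \alpha + 1$ for $\alpha \in Y^*$ and $f(\alpha) = 0$ otherwise. Then $f$ is unbounded in $\nu$, and each $f \rest \delta$ is determined by $A_\delta \in V[G]$, so $f \rest \delta \in V[G]$ for every $\delta < \nu$. Claim~\ref{2lifting-claim}(3) now yields $f \in V[G]$, whence $Y^* \in V[G]$.

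It remains to show $Y^* \in N_0$, which gives $Y := j(Y^*) \in j[N_0]$. The order type of $Y^* = j^{-1}[X]$ equals the order type of $X$ computed in $M[G][K]$; this is $\aleph_1$, since it is at least $\cf^{M[G][K]}(\rho) = \aleph_1$ and at most $|\rho|^{M[G][K]} = |\nu|^{M[G][K]} = \aleph_1$, the last equality by the L\'evy collapse factor in the remainder $\dot{\E}$ used during the lift. Because $\aleph_1^{M[G][K]} = \aleph_1^{V[G]} < \nu$ and order type is absolute, $|Y^*|^{V[G]} = \aleph_1 < \nu$, so $Y^* \in H(\nu)^{V[G]} = N_0$. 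Then
\[
j(Y^*) \cap \rho = \bigcup_{\gamma' < \nu} j(Y^* \cap \gamma') = \bigcup_{\gamma' < \nu} j(A_{\gamma'}) = \bigcup_{\gamma' < \nu} (X \cap j(\gamma')) = X,
\]
as required. The main technical obstacle is the cardinality bound $|Y^*|^{V[G]} < \nu$: it depends on $\nu$ being collapsed to $\aleph_1$ in $M[G][K]$ while $\aleph_1^{V[G]}$ is preserved, which is precisely what the lifting in Claim~\ref{2lifting-claim} arranges.
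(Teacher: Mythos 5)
Points (1)--(3) of your argument, and the first half of your treatment of (4), are in order: pulling $X$ back to $Y^{*}=j^{-1}[X]$, checking $Y^{*}\cap\gamma'=A_{\gamma'}\in V[G]$ for all $\gamma'<\nu$, coding $Y^{*}$ by an unbounded function $f:\nu\to\nu$, and invoking \autoref{2lifting-claim}(3) to get $Y^{*}\in V[G]$ is exactly the right use of the approximation property. The gap is the step ``$Y^{*}\in N_0$.'' The cardinality computation is wrong on its own terms: $\rho=\sup j[\nu]>j(\kappa)=\aleph_2^{M[G][K]}$, so $|\rho|^{M[G][K]}\neq\aleph_1$ (the bound $|X|^{M[G][K]}\le\aleph_1$ should instead come from $X\subseteq j[\nu]$ or $|j[N_0]|=\aleph_1$); more seriously, cardinality does not transfer from $M[G][K]$ back to $V[G]$ --- an ordinal of $M[G][K]$-cardinality $\aleph_1$ can have $V[G]$-cardinality $\nu$, and $\nu$ itself is such an ordinal, since the lift collapses it. In fact your own intermediate results refute the step: you have shown $Y^{*}\in V[G]$, and $Y^{*}$ is unbounded in $\nu$, which is regular in $V[G]$, so $|Y^{*}|^{V[G]}=\nu$ and $Y^{*}\notin H(\nu)^{V[G]}=N_0$; the proposed guess $j(Y^{*})$ is simply not available. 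Indeed no $Y\in j[N_0]$ can satisfy $Y\cap\rho=X$ for unbounded $X$, because every set of ordinals in $H(\nu)^{V[G]}$ is a bounded subset of $\nu$, whence $j(B)\cap\rho$ is bounded in $\rho$ for every $B\in N_0$.

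The way the claim is actually meant to be established --- exactly as in the paper's proof of the analogous \autoref{contradiction-claim}(4), where the guessing property is verified by showing its hypothesis can never be realized for $j[N_0]$ --- is by contradiction, and you already have most of the ingredients. Since $X\subseteq j[N_0]\cap j(\nu)=j[\nu]\subseteq\operatorname{ran}(j)$ and each $X\cap j(\gamma')=j(A_{\gamma'})$ has all of its elements in $\operatorname{ran}(j)$, one gets $\operatorname{ot}(A_{\gamma'})<\kappa=\crit(j)$ for every $\gamma'<\nu$: if $\operatorname{ot}(A)\ge\kappa$ and $e$ is the increasing enumeration of $A$, then $j(e)(\kappa)\in j(A)$, and $j(e)(\kappa)=j(\beta)$ would force $\beta=e(i)\in A$ and $\kappa=j(i)$, which is impossible. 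Hence $\operatorname{ot}(Y^{*})=\sup_{\gamma'<\nu}\operatorname{ot}(A_{\gamma'})\le\kappa<\nu$, while $Y^{*}\in V[G]$ (by \autoref{2lifting-claim}(3), via your coding) is unbounded in the $V[G]$-regular cardinal $\nu$ --- a contradiction. So no such $X$ exists and (4) holds, but not by exhibiting a guess; replacing your cardinality-absoluteness paragraph by this contradiction argument repairs the proof.
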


\subsection*{Comments on the Literature}

We would like to respectfully comment on some unresolved issues in the literature that are relevant to our work.

Part of our motivation came from the introduction of the paper of Cummings, Foreman, and Magidor in which they obtained the quasi-compactness of squares for $\aleph_\omega$. They had  set out to investigate, as they put it, ``the problem of the relationship between the sets of good and approachable points'' \cite[Page 2]{Cummings-Foreman-Magidor2004}. Their framework was the notion of \emph{canonical structure}, which refers to the infinitary objects whose definitions are essentially independent of any particular choices made in defining other objects \cite{Cummings-Foreman-Magidor2004,Cummings-Foreman-Magidor2006}. (In connection with approachability, see also \cite{Foreman-Magidor1997,Balogh-Davis-Just-Shelah-Szeptycki2000}.)

Some particulars of the model for \autoref{maintheorem} were fashioned after a statement of Cummings et al$.$ from their first canonical structures paper. The statement essentially went as follows \cite[Example 6.7]{Cummings-Foreman-Magidor2004}: Assume that $\aleph_\omega$ is a strong limit and that $2^{\aleph_\omega}=\aleph_{\omega+1}$. Assume also that $\vec{f}=\seq{f_\alpha}{\alpha<\aleph_{\omega+1}}$ is a continuous scale. Suppose that $S \in I[\aleph_{\omega+1} \cap \cof(\omega_1)]$. Then there is a club $C \subseteq \aleph_{\omega+1}$ such that if $N \prec H(\aleph_{\omega+2})$ has cardinality $\aleph_1$ and uniform cofinality $\omega_1$, $\sup(N \cap \aleph_{\omega+1}) = \gamma \in C\cap S$, and $\chi_N =^* f_{\sup(N \cap \aleph_{\omega+1})}$, then $N$ is internally approachable. The authors provided a sketch of the argument that did not appear require even internal unboundedness. It seemed that the conclusion was supposed to indicate something like sup-internal approachability.

However, Hannes Jakob pointed out to us that if there are stationarily many $N$ that are internally unbounded but not internally approachable (which is consistent \cite{Krueger2007}), then this stands as a counterexample to Example 6.7, which therefore cannot be literally true. To see this, observe that internally unbounded models of cardinality $\aleph_1$ are tight in $\prod_{n<\omega}\aleph_n$ and $\aleph_1$-uniform. It is then implied that the weak approachability ideal for $\aleph_{\omega+1}$ (see e.g$.$ \cite{Handbook-Eisworth}) is distinct from the approachability ideal, which is a contradiction if $\aleph_\omega$ is a strong limit.

Particularly in this context of these considerations, it is also natural to ask whether we can obtain the conclusion of \autoref{maintheorem} together with $\CH$. A reasonable approach would involve a version of the iteration presented above in \autoref{section-setup} that does not add reals to models of $\CH$. The problem of iterating Namba forcing without adding reals had been considered for a long time before being solved independently by Jensen, using the notion of subcomplete forcing \cite{Jensen2014}, and Shelah, using the notion of the $\mathbb{I}$-condition \cite[Chapter X]{PIF}. The trouble is in the difference between Laver-style and Miller-style Namba forcings, which are to some extent incompatible in iterations, as is demonstrated by a theorem of Magidor and Shelah \cite[Claim 4.2, Chapter XI]{PIF}. The $\mathbb{I}$-condition of Shelah applies to the Miller version, but our methods for \autoref{maintheorem} use the Laver version. Shelah did in fact announce results for the Laver version of the $\mathbb{I}$-condition (\cite[Remark XV.4.16A, Part 2]{PIF}, referencing Shelah $\#$ 311), but the paper never appeared.

\subsection*{Acknowledgments} We thank Hannes Jakob for providing the counterexample mentioned above and finding errors in the original versions of the paper, particularly in an erroneous argument for the claim that there is consistently a stationary set of structures that are sup-internally approachable but not internally unbounded. We thank James Cummings for some helpful correspondence, and we thank Grigor Sargsyan for clarifying the large cardinal hypotheses for failure of $\square_\kappa$ for limit $\kappa$.

\bibliographystyle{alpha}
\bibliography{bibliography}

\end{document}